\title{On real extensions of distal minimal homeomorphisms}
\author{Gernot Greschonig}
\address{School of Mathematical Sciences, Tel Aviv University, Ramat Aviv, Tel Aviv 69978, Israel}
\subjclass[2000]{Primary: 37B05, 37B20}
\keywords{Topological cocycles, regularity, distal minimal homeomorphisms}
\email{greschg@post.tau.ac.il}
\newtheorem{theorem}{Theorem}[section]
\newtheorem*{mainthm}{Main Theorem}
\newtheorem*{cor*}{Corollary}
\newtheorem{lemma}[theorem]{Lemma}
\newtheorem{fact}[theorem]{Fact}
\newtheorem{proposition}[theorem]{Proposition}
\theoremstyle{definition}
\newtheorem{defi}[theorem]{Definition}
\theoremstyle{remark}
\newtheorem{remark}[theorem]{Remark}
\newtheorem{remarks}[theorem]{Remarks}
\newtheorem{remarks*}[]{Remark}
\newtheorem{example}[theorem]{Example}
\newcommand{\comment}[1]{}
\newcommand{\Z}{\mathbb Z}
\newcommand{\T}{\mathbb T}
\newcommand{\R}{\mathbb R}
\newcommand{\mc}{\mathcal}
\subjclass[2000]{37B05, 37B20}
\keywords{}
\begin{document}\allowdisplaybreaks\frenchspacing

\thanks{The author was partially supported by the FWF research grant J-2622 and the research grant 1114/08 of the Israeli Science Foundation}

\begin{abstract}
We prove a structure theorem for topologically conservative real skew product extensions of distal minimal compact metric $\Z$-flows.
The main result states that every such extension can be represented by a perturbation of a Rokhlin skew product.
Moreover, we give certain counterexamples to point out that all components of the construction are in fact inevitable.
\end{abstract}

\maketitle

\section{Introduction}
The study of real-valued topological cocycles and real skew product extensions, also called cylinder flows, has been initiated by Besicovitch, Gottschalk, and Hedlund.
Besicovitch proved the existence of topologically transitive real skew product extensions, and the main result in \cite{G-H}, Chapter 14 can be rephrased to the assertion that a topologically conservative real skew product extension of a minimal rotation on a torus (finite or infinite dimensional) is either topologically transitive or defined by a topological coboundary.
More recently this result has been generalised to skew product extensions of a Kronecker transformation (cf. \cite{LM}).
While the results in \cite{G-H} and \cite{LM} are based on the fact that a minimal rotation acts as an isometry, a corresponding result apart from isometries has been proven for the class of distal minimal homeomorphisms usually called Furstenberg transformations (cf. \cite{Gr}).
However, in general the dichotomy does not hold true that a topologically conservative real skew product extension is either topologically transitive or defined by a topological coboundary.
This motivates the study of topologically conservative real skew product extensions apart from these two cases, carried out here in this paper in the general setting for \emph{distal} minimal homeomorphisms.

Let $T$ be a self-homeomorphism of a compact metric space $(X,d)$, and let $(X,T)$ denote the compact metric $\Z$-flow on $X$ defined by $(n,x)\mapsto T^n x$.
If no indication of the group acting continuously on a compact metric space is given, then in this paper the term compact metric flow will refer to the case of the group $\Z$ of integers with the action being represented by a self-homeomorphism.
We call a flow \emph{minimal}, if the whole space is the only non-empty invariant closed subset of $X$.
If $(X,T)$ and $(Y,S)$ are compact metric flows and $\pi$ is a continuous map from of $X$ \emph{onto} $Y$ with $\pi\circ T=S\circ\pi$, then $(Y,S)$ is called a factor of $(X,T)$ and $(X,T)$ is called an extension of $(Y,S)$.
The set of homeomorphisms commuting with $T$ is a topological group with the topology of uniform convergence, and this group will be denoted by $\textup{Aut}(X,T)$.
Two points $x,y\in X$ are called \emph{distal}, if it holds that
\begin{equation*}
\inf_{n\in\Z}d(T^n x,T^n y)> 0 ,
\end{equation*}
otherwise they are called proximal.
For a general Hausdorff topological group $G$ acting continuously on a compact Hausdorff space $X$ two points $x, y\in X$ are called proximal, if there exists a net $\{g_n\}_{n\in I}\subset G$ with $\lim g_n(x)=\lim g_n(y)$, otherwise they are called distal.
In the case of a compact metric $\Z$-flow these definitions are of course equivalent.
Now a homeomorphism or a group action is called distal, if any two distinct points are distal to each other, and an extension of flows is called distal, if any two distinct points in the same fibre are distal to each other.
One of the most important properties of distal group actions is the \emph{partitioning} of the compact space $X$ into closed minimal subsets, even if the action itself is not minimal.

Suppose that $A$ is an Abelian locally compact second countable (Abelian l.c.s.) group with zero element $\mathbf 0_A$, and let $A_\infty$ denote its one point compactification with the convention that $g+\infty=\infty+g=\infty$ for every $g\in A$.
For a continuous function $f\colon X\longrightarrow A$ we define define a map $f\colon \mathbb{Z}\times X \longrightarrow A$ by
\begin{equation*}
f(n,x)=
\begin{cases}
\sum_{k=0}^{n-1}f(T^k x) & \textup{if}\enspace n\geq 1 ,
\\
\mathbf 0_A & \textup{if}\enspace n=0 ,
\\
-f(-n,T^n x) & \textup{if}\enspace n < 0.
\end{cases}
\end{equation*}
This map satisfies that
\begin{equation*}
f(k,T^l x)+ f(l,x)=f(k+l,x)
\end{equation*}
for all integers $k,l$ and every $x\in X$, and thus it is a \emph{cocycle} of the $\mathbb{Z}$-action on $X$ given by $(n,x)\longmapsto T^n x$.
If a Hausdorff topological group $G$ acts on $X$, then a cocycle is a continuous map $f:G\times X\longrightarrow A$ so that $f(g,h(x))+f(h,x)=f(gh,x)$ holds true for all $g,h\in G$ and $x\in X$.
The \emph{skew product transformation} of the homeomorphism $T$ and the function $f$ is the homeomorphism
\begin{equation*}
\mathbf T_f(x,a)=(T x, f(x)+ a)
\end{equation*}
of $X\times A$, which is related to the cocycle $f(n,x)$ by the equality that
\begin{equation*}
\mathbf T_f^n(x,a)=(T^n x, f(n,x)+ a)
\end{equation*}
for every integer $n$.
We denote the \emph{orbit closure} of a point $x\in X$ under a continuous action of a Hausdorff topological group $G$ on $X$ by
\begin{equation*}
\bar{\mc O}_G(x)=\overline{\{ g(x):g\in G\}}
\end{equation*}
and we denote the orbit closure of $(x,a)\in X\times A$ under $\mathbf T_f$ by
\begin{equation*}
\bar{\mc O}_{T,f}(x,a)=\overline{\{\mathbf T_f^n(x,a):n\in\Z\}}.
\end{equation*}
We call the skew product transformation $\mathbf T_f$ \emph{point transitive} if
\begin{equation*}
\bar{\mc O}_{T,f}(x,a)=X\times A
\end{equation*}
holds true for some point $(x,a)\in X\times A$.
Obviously $(x',a')\in\bar{\mc O}_{T,f}(x,\mathbf 0_A)$ implies that $(x',a'+a)\in\bar{\mc O}_{T,f}(x,a)$ for every $a\in A$, and by the continuity of $\mathbf T_f$ it follows from $(x',a')\in\bar{\mc O}_{T,f}(x,\mathbf 0_A)$ and $(x'',a'')\in\bar{\mc O}_{T,f}(x',\mathbf 0_A)$ that $(x'',a'+a'')\in\bar{\mc O}_{T,f}(x,\mathbf 0_A)$.

Moreover, we shall use the notion of the \emph{prolongation} $\mc D_G(x)$ of a point $x\in X$ under the action of a Hausdorff topological group $G$, which is defined by
 \begin{equation*}
\mc D_G(x)=\bigcap\{\bar{\mc O}_G(\mc U):\mc U\enspace\textup{is an open neighbourhood of}\enspace x\},
\end{equation*}
and the prolongation $\mc D_{T,f}(x,a)$ of a point $(x,a)$ under the skew product transformation $\mathbf T_f$ is then defined by
\begin{equation*}
\mc D_{T,f}(x,a)=\bigcap\{\bar{\mc O}_{T,f}(U):U\enspace\textup{is an open neighbourhood of}\enspace (x,a)\}.
\end{equation*}

While the inclusion of the orbit closure in the prolongation is obvious, the following lemma shows that the coincidence of these sets is actually generic:

\begin{lemma}\label{lem:o_p}
Suppose that $(X,G)$ is a compact metric $G$-flow with a Hausdorff topological group $G$ acting continuously on $X$.
Then there exists a residual $G$-invariant set $\mc F\subset X$ so that
\begin{equation*}
\bar{\mc O}_{G}(x)=\mc D_{G}(x)
\end{equation*}
holds true for every $x\in\mc F$.
Furthermore, for a topological skew product extension $\mathbf T_f:X\times A\longrightarrow X\times A$ of a compact metric flow $(X,T)$ there exists a dense $T$-invariant residual set $\mc F$ of $X$ so that
\begin{equation*}
\bar{\mc O}_{T,f}(x,a)=\mc D_{T,f}(x,a)
\end{equation*}
holds true for every $x\in\mc F$ and $a\in A$.
The assertion holds as well for the extension of $\mathbf T_f$ to the product $X\times A_\infty$ defined by $(x,\infty)\mapsto (Tx,\infty)$ for every $x\in X$.
Moreover, for a continuous function $g=(g_1,g_2):X\longrightarrow\R^2$ the result holds also for the extension of $\mathbf T_g$ onto $X\times(\R_\infty)^2$, which is defined by $(x,s,\infty)\mapsto (Tx,s+g_1(x),\infty)$, $(x,\infty,t)\mapsto (Tx,\infty,t+g_2(x))$, and $(x,\infty,\infty)\mapsto (Tx,\infty,\infty)$, for every $x\in X$ and $s,t\in\R$.
\end{lemma}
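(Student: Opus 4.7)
The proof is a Baire-category argument based on comparing the orbit closure with its upper-semicontinuous regularisation.

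For the abstract $G$-flow case, I would fix a countable base $\{V_n\}_{n\in\N}$ of open sets of $X$ and set $E_n=\{x\in X:\bar{\mc O}_G(x)\cap V_n\neq\emptyset\}=\bigcup_{g\in G}g^{-1}V_n$, which is open in $X$. The crucial observation is the inclusion
\begin{equation*}
\{x\in X:\mc D_G(x)\cap V_n\neq\emptyset\}\subset\overline{E_n},
\end{equation*}
verified as follows: if $z\in V_n\cap\mc D_G(x)$, pick an open $V'$ with $z\in V'$ and $\overline{V'}\subset V_n$; then for every neighbourhood $\mc U$ of $x$ some orbit through $\mc U$ meets $V'$, so $\mc U\cap E_n\neq\emptyset$. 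The ``bad'' set on level $n$ therefore lies in the nowhere-dense boundary $\partial E_n$, and $\mc F:=X\setminus\bigcup_n\partial E_n$ is residual. For $x\in\mc F$ the closed sets $\bar{\mc O}_G(x)\subset\mc D_G(x)$ meet the same basic opens, hence they coincide. The $G$-invariance of $\mc F$ follows because both $\bar{\mc O}_G(x)$ and $\mc D_G(x)$ depend only on the $G$-orbit of $x$: the first from the definition, the second from $\bar{\mc O}_G(g\mc U)=\bar{\mc O}_G(\mc U)$ and the fact that $G$ acts by homeomorphisms sending neighbourhoods of $x$ onto neighbourhoods of $gx$.

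For the skew product $\mathbf T_f$ on $X\times A$, I would exploit that the translations $(y,b)\mapsto(y,b+a)$ commute with $\mathbf T_f$, so that both $\bar{\mc O}_{T,f}(x,a)$ and $\mc D_{T,f}(x,a)$ are the $(0,a)$-translates of their values at $a=\mathbf 0_A$; it thus suffices to handle the single fibre $a=\mathbf 0_A$. Furthermore, the cocycle identity $f(n,Tx)+f(x)=f(n+1,x)$ delivers $\bar{\mc O}_{T,f}(Tx,\mathbf 0_A)=\bar{\mc O}_{T,f}(x,\mathbf 0_A)+(0,-f(x))$ with the analogous relation for the prolongation, so the coincidence at $(x,\mathbf 0_A)$ is a $T$-invariant property of $x$. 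Rerunning the boundary argument in the $x$-variable with a countable base $\{U_i\times V_j\}$ of $X\times A$ where each $\overline{V_j}$ is compact, I define $E_{i,j}=\{x:\bar{\mc O}_{T,f}(x,\mathbf 0_A)\cap(U_i\times V_j)\neq\emptyset\}$ and prove the analogous inclusion by approximating a point $(z,c)$ in the prolongation by $\mathbf T_f^n(y,b)$ with $(y,b)$ close to $(x,\mathbf 0_A)$ and $b$ small relative to the distance from $c$ to the complement of $V_j$; this forces $(T^n y,f(n,y))\in U_i\times V_j$ and hence $y\in E_{i,j}$ close to $x$.

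For the extensions to $X\times A_\infty$ and $X\times(\R_\infty)^2$, the above treatment handles every fibre coordinate lying in $A$ (resp.\ $\R^2$). The remaining fibres involving $\infty$ have degenerate dynamics, e.g.\ $\bar{\mc O}_{T,f}(x,\infty)=\bar{\mc O}_T(x)\times\{\infty\}$ in $X\times A_\infty$ and the analogous simplifications for the three mixed/fully-infinite fibres in $X\times(\R_\infty)^2$. For each of these finitely many boundary fibres I would run the same base-open/boundary argument in the now compact ambient product space, obtaining a residual $T$-invariant subset of $X$ on which coincidence holds for that fibre; intersecting the finitely many residual sets so produced with the one from the finite-fibre argument yields a single residual $T$-invariant $\mc F\subset X$ where coincidence holds simultaneously on every fibre. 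The main subtlety I expect is producing a residual set in $X$ (rather than in the total space) uniform in the fibre parameter: this is supplied by the translation invariance of the skew product for finite $a$, and by the degenerate structure of the dynamics at $\infty$ in the boundary fibres.
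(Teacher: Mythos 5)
Your Baire-category argument for the first two assertions is correct and self-contained: the sets $E_n=\{x:\bar{\mc O}_G(x)\cap V_n\neq\emptyset\}$ are open and $G$-invariant, the inclusion of the ``bad'' set in $\bigcup_n\partial E_n$ is right, and the translation/cocycle trick reducing the skew product to the fibre $a=\mathbf 0_A$ with a boundary argument run in the $x$-variable is a legitimate (and more explicit) route to a residual, $T$-invariant set in $X$. The paper instead simply cites Akin--Glasner for the first assertion and then applies it to the compact metric $\Z$-flow on $X\times A_\infty$, using the commutation with translations to saturate over the finite fibres; your version essentially reproves that citation, which is fine.

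The genuine gap is in your treatment of the compactified extensions. What the lemma is needed for later (e.g.\ in Lemmas \ref{lem:R2}, \ref{lem:tilde_coc} and Proposition \ref{prop:inc}) is coincidence of $\bar{\mc O}$ and $\mc D$ \emph{computed in} $X\times A_\infty$ (resp.\ $X\times(\R_\infty)^2$) at points whose fibre coordinate is finite: one must be able to conclude that a point of the form $(z,\infty)$ lying in the prolongation of $(x,\mathbf 0_A)$ already lies in its orbit closure. Your finite-fibre argument cannot see this, because you only use basic open sets $U_i\times V_j$ with $\overline{V_j}$ compact, so it yields equality of the two sets only inside $X\times A$, leaving their parts in $X\times\{\infty\}$ uncompared. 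Your separate treatment of the ``boundary fibres'' addresses a different statement --- coincidence at base points $(x,\infty)$ --- which is in general false and cannot be rescued on any residual set: if $\mathbf T_f$ is topologically transitive on $X\times\R$, then every neighbourhood of $(x,\infty)$ contains finite points whose orbit set is dense, so $\mc D_{T,f}(x,\infty)=X\times\R_\infty$ while $\bar{\mc O}_{T,f}(x,\infty)\subset X\times\{\infty\}$ for every $x$. The intended (and used) assertion concerns only finite $a$, with closures and prolongations taken in the compactification. The repair stays within your method: either apply your abstract $G$-flow result directly to the compact metric $\Z$-flow on $X\times A_\infty$ (this is exactly the paper's step) and then saturate by translations in the finite coordinate, or rerun your boundary argument in the $x$-variable using a countable base of $X\times A_\infty$, including sets $U_i\times W$ with $W$ a neighbourhood of $\infty$; the approximation step still works there, since for $b$ in a small compact neighbourhood of $\mathbf 0_A$ the condition $f(n,y)+b\notin K'$ with $K'\supset K+\overline{B}$ forces $f(n,y)\notin K$. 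The same remark applies verbatim to the $X\times(\R_\infty)^2$ case, where the mixed fibres $X\times\R\times\{\infty\}$ carry nontrivial skew-product dynamics rather than ``degenerate'' ones.
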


\begin{proof}
The first assertion is proven in \cite{AkGl}, the second assertion follows then by application on the extension of $\mathbf T_f$ onto $X\times A_\infty$.
Moreover, the coincidence of $\bar{\mc O}_{T,f}(x,a)$ and $\mc D_{T,f}(x,a)$ for some $(x,a)\in X\times A$ implies the same coincidence for all $(x,a')\in\{x\}\times A$, because $\mathbf T_f$ commutes with the right translation on $X\times A$.
\end{proof}

\begin{defi}
A cocycle $f(n,x)$ is \emph{topologically recurrent} if, for every open neighbourhood $U$ of $\mathbf 0_A$ and every non-empty open set $\mc U\subseteq X$, there exists an integer $n\neq 0$ so that
\begin{equation*}
T^{-n}\mc U \cap \mc U \cap\{x: f(n,x)\in U\}\neq\emptyset.
\end{equation*}
A cocycle which is not topologically recurrent is called \emph{topologically transient}.
\end{defi}

\begin{remarks}\label{rems:rec}
The cocycle $f(n,x)$ is topologically recurrent if and only if the skew product $\mathbf T_f$ is \emph{topologically conservative} (\emph{regionally recurrent} in the terminology of \cite{G-H}), i.e. for every non-empty open set $\mc V \subseteq X\times G$ there exists an integer $n\neq 0$ so that $\mathbf T_f^n(\mc V)\cap\mc V\neq\emptyset$.

If the cocycle $f(n,x)$ is topologically transient and $\{(n_k,x_k)\}_{n\geq 1}\subset\Z\times X$ is a sequence with $d(x_k,T^{n_k} x_k)\to 0$, then it holds true that $\lim_{k\to\infty}|f(n_k,x_k)|\to\infty$.
\end{remarks}

\begin{defi}\label{def:er}
An element $a\in A$ is in the set $E(f)$ of \emph{topological essential values} of the cocycle $f(n,x)$ if, for every open neighbourhood $U(a)$ of $a$ and every non-empty open set $\mc U\subseteq X$, there exists an integer $n\neq 0$ so that
\begin{equation*}
T^{-n}\mc U \cap \mc U \cap\{y: f(n,y)\in U(a)\}\neq\emptyset.
\end{equation*}
The set of topological essential values is called the \emph{topological essential range}.
\end{defi}

\begin{remarks}\label{rem:er}
The cocycle $f(n,x)$ is topologically recurrent if and only if the zero element $\mathbf 0_A$ is an element of $E(f)$.
Moreover, if the topological essential range $E(f)$ is non-empty, then it is a closed \emph{subgroup} of $A$ (cf. \cite{LM}, Proposition 3.1).

A topological cocycle $f(n,x)$ has full topological essential range $E(f)=A$ if and only if the skew product transformation $\mathbf T_f$ is topologically transitive on $X\times A$ (\emph{regionally transitive} in the terminology of \cite{G-H}), i.e. for two arbitrary non-empty open sets $\mc V, \mc W \subseteq X\times A$ there exists an integer $n$ so that $\mathbf T_f^n(\mc V)\cap\mc W\neq\emptyset$ (cf. \cite{LM}, Proposition 3.2).
The statement that $E(f)=A$ is also equivalent to the point transitivity of $\mathbf T_f$, with a residual set of $x\in X$ so that $\bar{\mc O}_{T,f}(x,a)= X\times A$ for every $a\in A$ (cf. \cite{G-H}, Theorem 9.20).
\end{remarks}

\begin{defi}
Let $b:X\longrightarrow A$ be a continuous function and define a topological cocycle $h(n,x)$ by the function $h(x)=f(x)-b(T x)+b(x)$.
Then the cocycle $h(n,x)$ is called \emph{topologically cohomologous} to the cocycle $f(n,x)$ with the \emph{transfer function} $b(x)$, and it follows immediately that
\begin{equation*}
h(n,x)=f(n,x)-b(T^n x)+b(x).
\end{equation*}
A cocycle topologically cohomologous to zero is called a \emph{topological coboundary}.
\end{defi}

The following lemma has been used in Atkinson's proof \cite{A} that recurrent $\R^d$-valued topological cocycles of a minimal rotation on a torus are coboundaries if and only if the essential range is trivial.
In our case a generalised version for cocycles of a \emph{minimal} homeomorphism on a compact metric space is required, and for the sake of simplicity the lemma will be restricted to real valued cocycles.

\begin{lemma}\label{lem:at}
\begin{enumerate}
\item\label{as:s_l}Let $f(n,x)$ be a real valued topological cocycle of a minimal homeomorphism $T$ of a compact metric space $(X,d)$, and suppose that the skew product transformation $\mathbf T_f$ is \emph{not} topologically transitive on $X\times\R$.
Then for every neighbourhood $U$ of $0$ there exist a compact symmetric neighbourhood $K\subset U$ of $0$ and an $\varepsilon>0$ so that the set
\begin{equation}\label{eq:s_l}
\{x\in X:d(x, T ^n x)<\varepsilon\enspace\textup{and}\enspace f(n,x)\in 2K\setminus K^0\}
\end{equation}
is empty for every integer $n$.

\item More general, let $f(n,x)$ be a cocycle with values in an Abelian l.c.s. group $A$ and let $\{(n_k,x_k)\}_{k\geq 1}\subset(\Z\setminus\{0\})\times X$ be a sequence with $d(x_k, T^{n_k} x_k)\to 0$ and $f(n_k,x_k)\to g\in A_\infty$ (respectively $\R_\infty\times\R_\infty$) as $k\to\infty$.
Then for every $x\in X$ it holds true that $(x,g)\in\mc D_{T,f}(x,\mathbf 0_A)$.
Moreover, if $g\in A$ (i.e. finite), then it is an element of the essential range $E(f)$.
\end{enumerate}
\end{lemma}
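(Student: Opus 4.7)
The plan is to prove Part (ii) first, since Part (i) will follow by contradiction via Part (ii). For Part (ii), I would first use compactness of $X$ to extract a subsequence along which $x_k\to y$; the hypothesis $d(x_k,T^{n_k}x_k)\to 0$ then forces $T^{n_k}x_k\to y$ too, so $(x_k,\mathbf 0_A)\to (y,\mathbf 0_A)$ while $\mathbf T_f^{n_k}(x_k,\mathbf 0_A)=(T^{n_k}x_k,f(n_k,x_k))\to (y,g)$. This yields $(y,g)\in\mc D_{T,f}(y,\mathbf 0_A)$ for this particular base point $y$.

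To upgrade from $y$ to an arbitrary $x\in X$, I would invoke minimality of $T$ and pick $m_j\in\Z$ with $T^{m_j}y\to x$. Rearranging the cocycle identity gives
\begin{equation*}
f(n_k,T^{m_j}x_k)=f(m_j,T^{n_k}x_k)-f(m_j,x_k)+f(n_k,x_k),
\end{equation*}
and for each \emph{fixed} $j$ the first two terms cancel as $k\to\infty$ by continuity of $f(m_j,\cdot)$ and $x_k,T^{n_k}x_k\to y$, so $f(n_k,T^{m_j}x_k)\to g$. A diagonal choice of $k_j$ large enough that $T^{m_j}x_{k_j}$ and $T^{n_{k_j}+m_j}x_{k_j}$ both lie within $1/j$ of $x$ and $f(n_{k_j},T^{m_j}x_{k_j})$ within $1/j$ of $g$ (in a fixed metric on $A_\infty$, or coordinate-wise on $\R_\infty\times\R_\infty$) then witnesses $(x,g)\in\mc D_{T,f}(x,\mathbf 0_A)$. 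For the essential range claim with $g\in A$, I would pick a non-empty open $\mc U\subset X$, choose $x\in\mc U$, and read off from the same construction that for $j$ sufficiently large $y_j:=T^{m_j}x_{k_j}$ lies in $\mc U\cap T^{-n_{k_j}}\mc U\cap\{z:f(n_{k_j},z)\in U\}$, with $n_{k_j}\neq 0$ inherited from the original sequence.

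For Part (i) I would argue by contradiction. If the claim fails then for every compact symmetric $K\subset U$ and every $\varepsilon>0$ one finds $n\neq 0$ and $x$ with $d(x,T^nx)<\varepsilon$ and $f(n,x)\in 2K\setminus K^0$. Fixing such a $K$ (to be specialised) and taking $\varepsilon_k\to 0$, I would extract $(n_k,x_k)$ with $f(n_k,x_k)\to g\in 2K\setminus K^0$ using compactness of $2K\setminus K^0$, so $g\in\R$ is non-zero. Part (ii) would give $g\in E(f)$. Since $\mathbf T_f$ is not topologically transitive, $E(f)\neq\R$ by Remarks \ref{rem:er}, and one of three sub-cases applies: if $f$ is topologically transient (so $E(f)=\emptyset$), Remarks \ref{rems:rec} already force $|f(n_k,x_k)|\to\infty$, contradicting $f(n_k,x_k)\in 2K$; otherwise $E(f)$ is $\{0\}$ or $c\Z$ for some $c>0$, and I would select $K=[-r,r]\subset U$ with $r<c/2$ (no extra constraint in the case $E(f)=\{0\}$), so that $(2K\setminus K^0)\cap E(f)=\emptyset$, contradicting $g\in E(f)\cap (2K\setminus K^0)$.

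The main obstacle I anticipate is the diagonal argument in Part (ii): one must use the \emph{non-uniform} (in $j$) continuity of $f(m_j,\cdot)$ simultaneously with the double-indexed limit in $j$ and $k$ to produce a single sequence witnessing the prolongation at an arbitrary target $x$; the remainder of the proof amounts to bookkeeping around the possible shapes of the closed subgroup $E(f)\subset\R$.
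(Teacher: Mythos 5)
Your proposal is correct and follows essentially the same route as the paper: part (ii) via the cocycle identity $f(n_k,T^{m}x_k)=f(m,T^{n_k}x_k)-f(m,x_k)+f(n_k,x_k)$ together with minimality to transport the almost-periodic pairs into an arbitrary open set, and part (i) by contradiction, extracting a limit $g\in 2K\setminus K^0$ and using that $E(f)$ is a proper closed (hence discrete or empty) subgroup of $\R$ so that $K$ can be chosen with $(2K\setminus K^0)\cap E(f)=\emptyset$. The only differences are cosmetic: the paper fixes a single $m$ with $T^m x'\in\mc U$ instead of your diagonal over $m_j$, and it treats the transient case uniformly rather than as a separate sub-case via Remarks \ref{rems:rec}.
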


\begin{proof}
We want to start with the statement (ii).
Let $\{(n_k,x_k)\}_{k\geq 1}\subset(\Z\setminus\{0\})\times X$ be a sequence with the stated properties, and let $\mc U\subset X$ be an arbitrary non-empty open set and $U(g)$ an arbitrary neighbourhood of $g\in A_\infty$.
We may assume that $x_k\to x'\in X$, and as the homeomorphism $T$ is minimal, we can fix an integer $m$ with $T^m x'\in\mc U$.
It follows then that $T^m x_k\to T^m x'$ and $T^{n_k} T^m x_k=T^m T^{n_k} x_k\to T^m x'$ as $k\to\infty$, and we obtain by the cocycle identity and the continuity of $f(m,\cdot)$ that
\begin{eqnarray*}
f(n_k,T^m x_k) & = & f(m,T^{n_k} x_k) + f(n_k,x_k)+f(-m,T^m x_k)\\
& = & f(m,T^{n_k} x_k)+ f(n_k,x_k)-f(m, x_k) \to g.
\end{eqnarray*}
For all $k$ large enough it holds true that $T^m x_k$, $T^{n_k} T^m x_k\in\mc U$, and $f(n_k,T^m x_k)\in U(g)$.
It follows that $(x,g)\in\mc D_{T,f}(x,\mathbf 0_A)$ for every $x\in X$, because the neighbourhoods $\mc U$ and $U(g)$ were arbitrary, and in the case $g\in A$ this implies that $g\in E(f)$.

Suppose that $f(n,x)$ is real valued and $\mathbf T_f$ is not topologically transitive.
Then the essential range $E(f)$ is a proper closed subgroup of $\R$ by Remarks \ref{rem:er}, and for every neighbourhood $U$ of $0$ there exists a compact symmetric neighbourhood $K\subset U$ with $2K\setminus K^0\cap E(f)=\emptyset$.
If assertion (\ref{as:s_l}) is false for the compact symmetric neighbourhood $K$, then the statement (ii) implies that $g\in E(f)\cap 2K\setminus K^0$, in contradiction to the choice of $K$.
\end{proof}

Now we want to define a topological version of the \emph{Rokhlin extension} and the \emph{Rokhlin skew product}.
Moreover, we shall introduce the notion of a \emph{perturbated Rokhlin skew product}, which will be inevitable in the statement of our main result.

\begin{defi}
Suppose that $(X,T)$ is a distal minimal compact metric flow and that $(M,\{\Phi_t:t\in\R\})$ is a distal minimal compact metric $\R$-flow.
Let $f:X\longrightarrow\R$ be a function with a topologically transitive skew product $\mathbf T_f$ on $X\times\R$.
We define the \emph{Rokhlin extension} $T_{\Phi,f}$ on $X\times M$ by
\begin{equation*}
T_{\Phi,f}(x,m)=(T x,\Phi_{f(x)}(m)) ,
\end{equation*}
and we define the \emph{Rokhlin skew product} $\mathbf T_{\Phi,f}$ on $X\times M\times\R$ by
\begin{equation*}
\mathbf T_{\Phi,f}(x,m,t)=(T x,\Phi_{f(x)}(m),t+f(x)) .
\end{equation*}
Now let $g:\R\times M\longrightarrow\R$ be a cocycle of the $\R$-flow $(M,\{\Phi_t:t\in\R\})$.
Then we define the \emph{perturbated Rokhlin skew product} $\mathbf T_{\Phi,f,g}$ on $X\times M\times\R$ by
\begin{equation*}
\mathbf T_{\Phi,f,g}(x,m,t)=(T x,\Phi_{f(x)}(m),t+f(x)+g(f(x),m)) .
\end{equation*}
\end{defi}

\begin{remark}
We obtain from the cocycle identities for $f(n,x)$ and $g(t,m)$ that
\begin{equation*}
T_{\Phi,f}^n(x,m)=(T^n x,\Phi_{f(n,x)}(m)) ,
\end{equation*}
\begin{equation*}
\mathbf T_{\Phi,f}^n(x,m,t)=(T^n x,\Phi_{f(n,x)}(m),t+f(n,x)),
\end{equation*}
and
\begin{equation*}
\mathbf T_{\Phi,f,g}^n(x,m,t)=(T^n x,\Phi_{f(n,x)}(m),t+f(n,x)+g(f(n,x),m)
\end{equation*}
hold true for every integer $n$.
If $(x,0)$ is a transitive point for $\mathbf T_f$, then it follows from the minimality of the flow $(M,\{\Phi_t:t\in\R\})$ that $\{x\}\times M\subset\bar{\mc O}_{T_{\Phi,f}}(x,m)$ for every $m\in M$.
This implies however that $(x,m)$ is a transitive point for the distal homeomorphism $T_{\Phi,f}$, and therefore the compact $Z$-flow $(X\times M,T_{\Phi,f})$ is minimal.
\end{remark}

First we want to present a simple example of a topological Rokhlin skew product which is of topological type $III_0$, i.e. recurrent with a trivial topological essential range but not a topological coboundary.

\begin{example}\label{ex:zi}
Let $f:\T\longrightarrow\R$ be a continuous function with a topologically transitive skew product extension $\mathbf T_f$ of the irrational rotation $T$ by $\alpha$ on the torus, and let $\beta\in(0,1)$ be irrational so that the $\R$-flow $\{\Phi_t:t\in\R\}$ defined by
\begin{equation*}
\Phi_t(y,z)=(y+t,z+\beta t)
\end{equation*}
is minimal and distal on $\T^2$.
Then the minimal and distal Rokhlin extension $T_{\Phi,f}$ on $\T^3$ turns out to be
\begin{equation*}
T_{\Phi,f}(x,y,z)=(x+\alpha,y+f(x),z+\beta f(x)),
\end{equation*}
and putting $h(x,y,z)=f(x)$ for all $(x,y,z)\in\T^3$ gives a topological type $\textup{III}_0$ cocycle $h(n,(x,y,z))$ of the homeomorphism $T_{\Phi,f}$ with the skew product extension $\mathbf T_{\Phi,f}$.
Indeed, as $\mathbf T_f$ is point transitive, the cocycle $h(n,(x,y,z))$ is recurrent, but it is not bounded and therefore cannot be a topological coboundary.
Furthermore, a sequence $\{t_n\}_{n\geq 1}\subseteq\R$ with $t_n\mod 1\to 0$ and $(\beta t_n)\mod 1\to 0$ cannot have a finite cluster point apart from zero, and hence $E(h)=\{0\}$.
For a point $\tilde x\in\T$ so that $(\tilde x,0)\in\T\times\R$ is transitive under $\mathbf T_f$ and for arbitrary $y,z\in\T$ the orbit closure of $(\tilde x,y,z),0)$ under the skew product extension of $T_{\Phi,f}$ by $h$ is of the form
\begin{equation*}
\bar{\mc O}_{\mathbf T_{\Phi,f}}((\tilde x,y,z),0)=\bar{\mc O}_{T_{\Phi,f},h}((\tilde x,y,z),0)=\T\times\{(\Phi_t(y,z),t)\in\T^2\times\R:t\in\R\} .
\end{equation*}
The collection of these sets for all $(y,z)\in\T^2$ defines a partition of $\T^3\times\R$ into orbit closures under $\mathbf T_{\Phi,f}$.
\end{example}

The next example makes clear that the perturbation of a Rokhlin skew product by a cocycle $g (t,m):\R\times M\longrightarrow\R$ of the $\R$-flow $\{\Phi_t:t\in\R\}$ cannot necessarily be eliminated by cohomology with respect to a continuous transfer function.
\begin{example}
We let $T$, $f$, and $\{\Phi_t:t\in\R\}$ be defined as in Example \ref{ex:zi}, and we let $g(t,(y,z))$ be a topologically transitive cocycle for the $\R$-flow $\{\Phi_t:t\in\R\}$.
We put
\begin{equation*}
\tilde h(x,y,z)= f(x)+g(f(x),(y,z))
\end{equation*}
and obtain that
\begin{equation*}
\tilde h(n,(x,y,z))= f(n,x)+g(f(n,x),(y,z))
\end{equation*}
is a cocycle of $T_{\Phi,f}$ with the skew product extension $\mathbf T_{\Phi,f,g}$ on $\T^3\times\R$.
From unique ergodicity it follows that $\int_{\T^2} g(t,(y,z)) d\lambda(y,z)=0$ for every $t\in\R$.
As the perturbation $g(h(n,x),(y,z)))$ is unbounded, there cannot be a continuous cobounding function defined on $\T^3$ making $\tilde h$ and $h$ cohomologous.
However, the structure of the orbit closures in the skew product is preserved in the sense that
\begin{equation*}
\bar{\mc O}_{\mathbf T_{\Phi,f,g}}((\tilde x,y,z),0)=\T\times\{(\Phi_t(y,z),t+g(t,(y,z)))\in\T^2\times\R:t\in\R\} .
\end{equation*}
\end{example}

Motivated by these examples we want to formulate the main result of this paper:

\begin{mainthm}
Suppose that $(X,T)$ is a distal minimal compact metric $\Z$-flow and $f:X\longrightarrow\R$ is a continuous function with a topologically recurrent cocycle which is not a coboundary.
Then there exist a factor $(X_\alpha,T_\alpha)=\pi_\alpha(X,T)$, a continuous function $f_\alpha:X_\alpha\longrightarrow\R$, a compact metric space $(M,\delta)$, and a continuous distal $\R$-flow $\{\Phi_t:t\in\R\}$ on $M$ so that the Rokhlin extension $(X_\alpha\times M,T_{\alpha,\Phi,f_\alpha})$ is also a factor $(Y,S)=\pi_Y(X,T)$ of $(X,T)$ and the function $f$ is topologically cohomologous to $f_Y\circ\pi_Y$ for a suitable continuous function $f_Y:Y\longrightarrow\R$.
Moreover, there exists a topological cocycle $g:\R\times M\longrightarrow\R$ of the $\R$-flow $(M,\{\Phi_t:t\in\R\})$ so that
\begin{equation*}
f_Y(x,m)=f_\alpha(x)+g(f_\alpha(x),m)
\end{equation*}
holds true for every $(x,m)\in Y=X_\alpha\times M$, and thus the skew product $\mathbf S_{f_Y}$ is the perturbated Rokhlin skew product $\mathbf T_{\alpha,\Phi,f_\alpha, g}$.
Moreover, there exists a residual set of $x\in X$ for which it holds true that
\begin{equation*}
\pi_Y^{-1}(x)\times\{0\}\subset\bar{\mc O}_{T, f_Y\circ\pi_Y}(x,0) .
\end{equation*}
\end{mainthm}

\begin{cor*}
The skew product $\mathbf R_{f_\alpha\circ\tau_\alpha}$ over the distal homeomorphism
\begin{eqnarray*}
& R:& Y\longrightarrow Y \\
& & (x,m)\mapsto(T_\alpha x,m) ,
\end{eqnarray*}
in which $\tau_\alpha:(Y,S)\longrightarrow(X_\alpha,T_\alpha)$ denotes the factor map $(x,m)\mapsto x$, is a topologically transitive extension of every minimal $R$-orbit closure $X_\alpha\times\{m\}\subset Y$.
This skew product is related to the skew product transformation $\mathbf S_{f_Y}$ by the continuous mapping $F:Y\times\R\longrightarrow Y\times\R$ defined by
\begin{equation*}
F(x,m,t)=(x,\Phi_t(m),t+g(t,m))
\end{equation*}
so that
\begin{equation*}
F\circ\mathbf R_{f_\alpha\circ\tau_\alpha}=\mathbf S_{f_Y}\circ F .
\end{equation*}

If the minimal compact metric flow $(X,T)$ is uniquely ergodic, then the mapping $F$ is onto and closed.
Therefore the skew product $\mathbf S_{f_Y}$ is a topological factor of the skew product $\mathbf R_{f_\alpha\circ\tau_\alpha}$, and the space $Y\times\R$ admits a partition into the collection of $\mathbf S_{f_Y}$-orbit closures given by the images of the sets $X_\alpha\times\{m\}\times\R$ under the closed continuous onto mapping $F$ for all $m\in M$.
\end{cor*}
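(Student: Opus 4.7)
The plan is to treat the three assertions of the corollary in order: topological transitivity of $\mathbf R_{f_\alpha\circ\tau_\alpha}$ on each minimal $R$-orbit closure, the intertwining relation, and---under the unique ergodicity hypothesis---the surjectivity and closedness of $F$, from which the partition statement will follow.

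For the first claim I would observe that $R(x,m)=(T_\alpha x,m)$ fixes every slice $X_\alpha\times\{m\}$ and restricts to a copy of $T_\alpha$ there via the projection $\tau_\alpha$, and that the function $f_\alpha\circ\tau_\alpha$ depends only on $x$. Hence the skew product $\mathbf R_{f_\alpha\circ\tau_\alpha}$ restricted to $X_\alpha\times\{m\}\times\R$ is conjugate to $\mathbf T_{\alpha,f_\alpha}$, which is topologically transitive by the Rokhlin construction in the Main Theorem; so each such slice is an $\mathbf R_{f_\alpha\circ\tau_\alpha}$-orbit closure carrying a dense orbit. The intertwining $F\circ\mathbf R_{f_\alpha\circ\tau_\alpha}=\mathbf S_{f_Y}\circ F$ is then verified by direct computation: after collapsing the first two coordinates, the required identity of the third coordinate reduces to $g(t+f_\alpha(x),m)=g(f_\alpha(x),\Phi_t m)+g(t,m)$, which is precisely the $\R$-cocycle identity for $g$.

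The heart of the proof, and the step I expect to be the main obstacle, is showing that $F$ is onto and closed once $(X,T)$ is uniquely ergodic. Unique ergodicity passes to the factor $(Y,S)$, and disintegrating the unique $S$-invariant probability over the unique $T_\alpha$-invariant probability on $X_\alpha$ combined with the fact that $\{f_\alpha(n,x):n\in\Z\}$ is dense in $\R$ (since $f_\alpha$ has transitive cocycle) forces the fibre measures to be $\Phi_t$-invariant for every $t$, so $(M,\{\Phi_t\})$ is uniquely ergodic with invariant probability $\mu_M$. Integrating the $\R$-cocycle identity shows that $a(t):=\int_M g(t,m)\,d\mu_M$ is additive and continuous, hence of the form $a(t)=ct$; after absorbing the linear drift into the $f_\alpha$-component of the decomposition $f_Y=f_\alpha+g(f_\alpha,\cdot)$, we may assume $c=0$. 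The Birkhoff theorem for uniquely ergodic $\R$-flows then gives the key growth estimate $g(t,m)/t\to 0$ uniformly in $m$, so that $|t+g(t,m)|\to\infty$ uniformly as $|t|\to\infty$. Surjectivity of $F$ follows: for each $(x,m',s)\in Y\times\R$, the continuous function $t\mapsto t+g(t,\Phi_{-t}m')$ attains the value $s$ by the intermediate value theorem, yielding a preimage $(x,\Phi_{-t_0}m',t_0)$. Closedness follows from properness by the same estimate: since $Y$ is compact and the $t$-coordinate of any $F$-preimage of a bounded set is bounded, $F^{-1}(K)$ is compact for every compact $K\subset Y\times\R$.

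The partition statement then follows cleanly. Each set $F(X_\alpha\times\{m\}\times\R)$ is the image under the closed equivariant map $F$ of an $\mathbf R_{f_\alpha\circ\tau_\alpha}$-orbit closure; so it is itself a closed $\mathbf S_{f_Y}$-invariant subset, and by the transitivity on slices it coincides with the $\mathbf S_{f_Y}$-orbit closure of any of its points. Surjectivity of $F$ combined with the disjointness of distinct orbit closures then yields that these sets exhaust and partition $Y\times\R$ as $m$ ranges over $M$.
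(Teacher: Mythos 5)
Your handling of the first two assertions (restriction of $\mathbf R_{f_\alpha\circ\tau_\alpha}$ to a slice is conjugate to the transitive $\mathbf T_{\alpha,f_\alpha}$, and the intertwining relation reduces to the cocycle identity $g(t+f_\alpha(x),m)=g(f_\alpha(x),\Phi_t m)+g(t,m)$) agrees with the paper, as does your deduction of surjectivity (intermediate value theorem applied to $t\mapsto t-g(-t,m)$) and closedness (properness) \emph{once} one knows $g(t,m)/t\to 0$ uniformly. The genuine gap is exactly at the step you call the heart of the proof. From the cocycle identity and $\Phi$-invariance of $\mu_M$ you only obtain $a(t):=\int_M g(t,m)\,d\mu_M(m)=ct$, and you then ``absorb the linear drift'' and assume $c=0$. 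This is not available: the corollary is a statement about the specific $f_\alpha$, $\{\Phi_t\}$, $g$ and $F$ produced by the Main Theorem. Absorbing $ct$ into the decomposition $f_Y=f_\alpha+g(f_\alpha,\cdot)$ forces you to replace $f_\alpha$ by $(1+c)f_\alpha$ and to reparametrise the flow $\{\Phi_t\}$, i.e.\ it proves ontoness and closedness for a \emph{different} map $F$; and in the case $c=-1$ the absorption cannot be performed at all, while $t+g(t,m)=o(|t|)$ and your surjectivity and properness arguments for the original $F$ collapse. So $c=0$ must actually be proved, not assumed.

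The paper supplies precisely the missing input, which your proposal never invokes: $f_\alpha$ and $f_Y$ are constructed from the relatively invariant measures on fibres, so the cocycle $(f_Y-f_\alpha\circ\tau_\alpha)(n,\cdot)$ has zero integral over every $\tau_\alpha$-fibre (uniformly bounded integral after the coboundary adjustment made in the proof of the Main Theorem), and under unique ergodicity the unique $\{\Phi_t\}$-invariant measure $\mu_M$ coincides with that fibre measure. Since $(f_Y-f_\alpha\circ\tau_\alpha)(n,(x,m))=g(f_\alpha(n,x),m)$ up to bounded terms and $f_\alpha(n,x)$ is unbounded ($\mathbf T_{\alpha,f_\alpha}$ is transitive), this forces $\int_M g(t,m)\,d\mu_M(m)=0$, i.e.\ $c=0$, and then your Birkhoff-type argument (or the paper's equation for $f_Y-f_\alpha$) gives the uniform convergence $g(t,\cdot)/t\to 0$, after which your ontoness, properness and partition arguments go through essentially as in the paper. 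A smaller remark: your derivation of unique ergodicity of $(M,\{\Phi_t\})$ from invariance of the disintegration has the implication pointing the wrong way; the paper's argument is the clean one, namely that every $\{\Phi_t\}$-invariant probability on $M$ produces an $S$-invariant product measure on $Y=X_\alpha\times M$, so unique ergodicity of $(Y,S)$ forces uniqueness on $M$.
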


\begin{remarks}
Even though the compact metric flow $(X,T)$ is not necessarily a Rokhlin extension, the existence of a function $f:X\longrightarrow\R$ with a topologically recurrent cocycle apart from a coboundary and with a non-transitive skew product extension forces the existence of a Rokhlin extension factor $(Y,S)$ with a non-trivial flow $\{\Phi_t:t\in\R\}$ as well as the existence of a function cohomologous to $f$ defined on this factor.

If the flow $(X,T)$ is uniquely ergodic, then the perturbation cocycle $g(t,y)$ fulfils that $g(t,y)/t\to 0$ uniformly for all $y\in Y$.
Therefore the perturbation, even if not uniformly bounded, is dominated by the linear term in the Rokhlin skew product.
However, even in the non uniquely ergodic case the perturbation cannot establish transitivity of the skew product $\mathbf S_{f_Y}$, because the transitivity of $\mathbf T_f$ follows then.
\end{remarks}

The proofs of our main results will be concluded at the end of the following section, which starts with the structure theory of \emph{distal} minimal flows as our most important tool.

\section{Real cocycles of distal minimal flows}

Furstenberg's structure theorem for distal minimal flows will be essential in the further study of cocycles.
The structure theorem is based on the following definitions of an $M$-bundle and an isometric extension.

\begin{defi}
Let $X$ and $Y$ be compact metric spaces, $\pi$ be a continuous map from of $X$ onto $Y$, and $M$ be a compact homogenous metric space.
Suppose that there exists a real valued function $\rho(x_1,x_2)$ defined on
\begin{equation}
R_\pi=\{(x_1,x_2)\in X\times X: \pi(x_1)=\pi(x_2)\}
\end{equation}
 and continuous on $R_\pi$, so that for every $y\in Y$ the function $\rho$ is a metric on the fibre $\pi^{-1}(y)$ with an isometry between $\pi^{-1}(y)$ and $M$.
Then $X$ is called an $M$-\emph{bundle} over $Y$.
\end{defi}

\begin{defi}
Let $(X,T)$ and $(Y,S)=\pi(X,T)$ be compact metric flows so that $X$ is an $M$-bundle over $Y$.
If the function $\rho$ satisfies that $\rho(x_1,x_2)=\rho(T x_1,T x_2)$ for all $x_1,x_2$ in the same fibre of $X$ over $Y$, then $(X,T)$ is called an \emph{isometric extension} of $(Y,S)$.
\end{defi}

\begin{fact}[Furstenberg's structure theorem]
Let $(X,T)$ be a distal minimal compact metric flow.
Then there exists a countable ordinal $\eta$ with subflows $(X_\xi,T_\xi)=\pi_\xi(X,T)$ for each ordinal $0\leq\xi\leq\eta$, so that the following properties hold true:
\begin{enumerate}
\item $(X_\eta,T_\eta)=(X,T)$ and $(X_0,T_0)$ is the trivial flow.
\item $(X_\xi,T_\xi)=\pi_\xi^{\zeta}(X_{\zeta},T_{\zeta})$ is a subflow of $(X_{\zeta},T_{\zeta})$ for all ordinals $0\leq\xi<\zeta\leq\eta$.
\item For every ordinal $0\leq\xi<\eta$ the flow $(X_{\xi+1},T_{\xi+1})$ is an isometric extension of $(X_\xi,T_\xi)$.
\item For a limit ordinal $0<\xi\leq\eta$ the flow $(X_\xi,T_\xi)$ is the inverse limit of the flows $\{(X_{\zeta},T_{\zeta}):0\leq\zeta<\xi\}$.
\end{enumerate}
A system $\{(X_\xi,T_\xi):0\leq\xi\leq\eta\}$ with the properties above is called a quasi-isometric system (I-system).
\end{fact}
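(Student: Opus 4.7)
The plan is to build the tower $(X_\xi,T_\xi)$ by transfinite induction, where at successor stages we extract a maximal isometric extension and at limit ordinals we take an inverse limit. The cornerstone structural ingredient is Furstenberg's key lemma on distal extensions: if $(Z,R)$ is a distal minimal extension of a compact metric minimal flow $(W,Q)$ with $Z\neq W$, then there is an intermediate factor $W\subsetneq V\subseteq Z$ such that $V\to W$ is a non-trivial isometric extension. This auxiliary lemma is itself proved through the Ellis enveloping semigroup, which is a group of homeomorphisms in the distal case, together with an analysis of the relative regionally proximal relation on $Z$ over $W$; one shows that this relation is a proper closed invariant equivalence relation whose quotient is the desired isometric extension.

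The construction then unfolds as follows. Set $(X_0,T_0)$ to be the trivial one-point flow. Suppose $(X_\xi,T_\xi)$ has already been defined as a factor of $(X,T)$ via a map $\pi_\xi$. If $X_\xi=X$, stop. Otherwise $\pi_\xi$ is itself a distal extension (being a factor map of a distal minimal flow), and the key lemma produces a non-trivial intermediate isometric extension inside $X$. We take $(X_{\xi+1},T_{\xi+1})$ to be the \emph{maximal} such extension, namely the factor corresponding to the relative equicontinuous structure relation of $\pi_\xi$. That this factor is again an isometric extension of $(X_\xi,T_\xi)$ is verified by building the required bundle metric from a uniformly equicontinuous family of fibre maps. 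At a limit ordinal $\xi$, set $(X_\xi,T_\xi)$ to be the inverse limit of the flows $(X_\zeta,T_\zeta)$ with $\zeta<\xi$; this is again a distal minimal factor of $(X,T)$ by a routine direct check.

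The main obstacle, and what I expect to be the hardest step, is verifying that the induction terminates at a \emph{countable} ordinal $\eta$ and that the top flow equals $(X,T)$. For termination: because $X$ is compact metric, $C(X)$ is separable, and each strict isometric extension $X_{\xi+1}\to X_\xi$ strictly enlarges the $T$-invariant closed subalgebra of $C(X)$ formed by pullbacks of continuous functions from $X_\xi$; a strictly increasing transfinite chain of closed subspaces of a separable Banach space must have countable length, so the construction stabilises at some $\eta<\omega_1$. For the identification $X_\eta=X$: if we had $X_\eta\subsetneq X$, then the distal extension $(X,T)\to(X_\eta,T_\eta)$ would admit, by the key lemma, a further non-trivial isometric extension of $X_\eta$ inside $X$, contradicting the termination at stage $\eta$.

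The four listed properties then follow immediately: (i) records termination at $X$ together with the definition of $X_0$; (ii) is automatic since each later stage is by construction an extension of every earlier one, with the factor maps $\pi_\xi^\zeta$ given by composition; (iii) and (iv) hold by definition at successor and limit ordinals, respectively.
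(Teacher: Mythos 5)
The paper offers no proof of this statement: it is quoted as a Fact --- Furstenberg's structure theorem --- with the reference \cite{Fu}, and only the subsequent normality and fibre-connectedness refinements are argued in the text. So there is nothing internal to compare your argument with; judged on its own, your outline follows the standard route, essentially Furstenberg's in its modern Ellis-theoretic formulation: at successor stages take the maximal intermediate equicontinuous (isometric) extension, i.e.\ the quotient by the relativised equicontinuous structure relation of $\pi_\xi$; at limit ordinals take inverse limits; deduce countable termination from separability of $C(X)$ (a strictly increasing well-ordered chain of closed $T$-invariant subalgebras of a separable Banach space is countable); and conclude $X_\eta=X$ because otherwise the key lemma would produce a further nontrivial isometric extension.

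Two caveats keep this from being a proof rather than a correct sketch. First, your aside that the enveloping semigroup is ``a group of homeomorphisms in the distal case'' is false: by Ellis's theorem it is a group, but its elements are in general discontinuous bijections of $X$ --- a point the paper itself stresses in the proof of Proposition \ref{prop:flow}. The analysis of the relative (regionally) proximal relation therefore has to be carried out with pointwise limits and the topology of the Ellis group, not with homeomorphisms. Second, essentially all of the real work is delegated to assertions you do not prove: that for a nontrivial distal extension of minimal flows the relativised equicontinuous structure relation is strictly smaller than $R_\pi$ (this is the heart of Furstenberg's theorem, and properness of the regionally proximal relation alone does not hand you a proper closed invariant \emph{equivalence} relation without further argument), and that an equicontinuous extension of minimal compact metric flows is isometric in the $M$-bundle sense, with a single homogeneous fibre space $M$ serving all fibres. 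Since the statement is quoted in the paper as a classical fact, citing these ingredients is reasonable, but as written your text is an outline of the known proof rather than a complete argument.
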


\begin{defi}
A quasi-isometric system $\{(X_\xi,T_\xi):0\leq\xi\leq\eta\}$ is called \emph{normal}, if $(X_{\xi+1},T_{\xi+1})$ is the maximal isometric extension of $(X_{\xi},T_\xi)$ in $(X_{\eta},T_{\eta})$ for each ordinal $0\leq\xi\leq\eta$.
This quasi-isometric system is unique and it gives the minimal ordinal $\eta$ to represent the compact metric flow $(X,T)=(X_\eta,T_\eta)$.
(cf. \cite{Fu}, Proposition 13.1, Definition 13.2, Definition 13.3)
\end{defi}

The connectedness of fibres in isometric extensions will be essential in our arguments, and it will be ensured by representing the minimal compact metric flow by the \emph{normal} quasi-isometric system.

\begin{proposition}
Let $\{(X_\xi,T_\xi):0\leq\xi\leq\eta\}$ be a normal quasi-isometric system.
Then the flow $(X_1,T_1)$ is a minimal rotation on a compact metric group which is not necessarily connected, while for every ordinal $1\leq\xi<\eta$ the isometric extension from $(X_\xi,T_\xi)$ to $(X_{\xi+1},T_{\xi+1})$ has a connected fibre space.
\end{proposition}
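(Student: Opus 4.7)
My plan is to handle the two parts separately: the assertion about $(X_1, T_1)$ follows from standard structure theory, while the claim of connected fibres at higher levels will be established by contradiction, using the normality (maximality) condition together with the theorem that compositions of isometric extensions of minimal distal flows are again isometric.

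For $(X_1, T_1)$: since $(X_0, T_0)$ is the trivial flow, an isometric extension $X_1 \to X_0$ is just a compact metric space carrying a $T_1$-invariant continuous metric, so $T_1$ is equicontinuous. The classical structure theorem for minimal equicontinuous $\Z$-flows then identifies $(X_1, T_1)$ with a minimal rotation on a compact metric group, and this group need not be connected, e.g.\ a cyclic rotation of a finite group.

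For $\xi \geq 1$ I would argue by contradiction, assuming the fibre $M$ of $\pi_{\xi+1}^{\xi}\colon X_{\xi+1}\to X_\xi$ is not connected. As $M$ is compact homogeneous, the relation ``same connected component'' is a closed equivalence relation preserved by the isometric structure group, so fibrewise collapsing inside $X_{\xi+1}$ yields an intermediate factor $X_\xi \subsetneq X'_\xi \subseteq X_{\xi+1}$ of $X=X_\eta$ such that $X'_\xi \to X_\xi$ is an isometric extension with totally disconnected non-trivial fibre. When $\xi = \zeta+1$ is a successor, the composition $X'_\xi \to X_\xi \to X_\zeta$ is an isometric extension (by the composition theorem), presenting $X'_\xi$ as an isometric extension of $X_\zeta$ which is a factor of $X$ and strictly larger than $X_{\zeta+1}=X_\xi$; this contradicts the normality condition at level $\zeta$, which says $X_{\zeta+1}$ is the maximal such extension.

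For a limit ordinal $\xi$ the compositional shortcut fails, and I would instead exploit that $X_\xi$ is the inverse limit of $\{X_\zeta : \zeta<\xi\}$: the totally disconnected structure group of $X'_\xi \to X_\xi$ is an inverse limit of finite groups, and each finite-group-valued component of the defining cocycle, being a continuous map from an inverse limit to a discrete space, factors through some $X_\zeta$ with $\zeta<\xi$. Choosing such $\zeta$ to be a successor (always possible by countable cofinality) yields a non-trivial totally disconnected isometric extension of $X_\zeta$ as a factor of $X$, reducing the problem to the already handled successor case. The hard part will be this descent step, specifically ensuring that a genuinely non-trivial factoring through a successor ordinal predecessor is available; the remaining ingredients are standard pieces of Furstenberg's structure theory for distal flows.
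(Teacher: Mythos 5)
The part about $(X_1,T_1)$ is fine, and your idea of collapsing the connected components of the fibres to extract an intermediate factor $X_\xi\subsetneq X'_\xi\subseteq X_{\xi+1}$ with totally disconnected fibres over $X_\xi$ is a reasonable reduction. The gap is the step you lean on to finish the successor case: there is no ``composition theorem'' saying that a composition of isometric extensions of minimal distal flows is again isometric --- this is false. If it were true, every distal minimal flow would be isometric over the trivial flow, i.e.\ equicontinuous, and Furstenberg's tower would always have height one; the Heisenberg nilsystem (a circle extension of a circle extension of a point which is not equicontinuous) is a concrete counterexample. What your argument actually needs is the special case that an isometric extension with \emph{totally disconnected} fibres of an isometric extension is again isometric over the base. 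That statement is true in this setting, but it is not an off-the-shelf fact: it is essentially equivalent to the connectedness assertion you are trying to prove (apply it to the maximal isometric extension of $X_\zeta$ inside $X_{\xi+1}$), and in the literature it is exactly the content of the relative connectedness theorem, Theorem 3.7 of \cite{MMWu}, which is what the paper invokes: since $X_\xi=X_{\xi+1}/S(\pi_{\xi-1}^{\xi+1})$ is the maximal isometric extension of $X_{\xi-1}$ inside $X_{\xi+1}$ (by normality), that theorem gives directly that $\pi_\xi^{\xi+1}$ has connected fibres. So as written your successor step either rests on a false general statement or begs the question; to repair it you would have to supply an independent proof of the totally-disconnected composition lemma (e.g.\ via Ellis group or relative regional proximality arguments), which is the real work here.

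The limit-ordinal case has a secondary problem: besides being only sketched (an isometric extension of an inverse limit is an $M$-bundle, not a priori a skew product by a cocycle into the structure group, so the ``finite-group-valued components factor through some $X_\zeta$'' descent needs justification), it ultimately reduces to your successor case and hence inherits the same gap. The paper avoids descent altogether: for a limit ordinal $\gamma$ it applies Theorem 3.7 of \cite{MMWu} with base $X_\zeta$ for each $\zeta<\gamma$ to get connectedness of the fibres of $\pi_{\zeta+1}^{\gamma+1}$, and then writes
\begin{equation*}
(\pi_\gamma^{\gamma+1})^{-1}(x_\gamma)=\bigcap_{0\leq\zeta<\gamma}(\pi_{\zeta+1}^{\gamma+1})^{-1}(\pi_{\zeta+1}^\gamma(x_\gamma)) ,
\end{equation*}
a decreasing intersection of compact connected sets, which is connected by Kuratowski's theorem. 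You might keep your component-collapsing picture as motivation, but the proof needs the McMahon--Wu input (or an equivalent argument) rather than a composition principle for isometric extensions.
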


\begin{proof}
We use the terminology and the results out of the paper \cite{MMWu}.
For an ordinal $1\leq\xi<\eta$, which is not a limit ordinal, we let $S(\pi_{\xi-1}^{\xi+1})$ be the relativised equicontinuous structure relation of the factor map $\pi_{\xi-1}^{\xi+1}$.
The compact metric flow $(X_{\xi+1},T_{\xi+1})/S(\pi_{\xi-1}^{\xi+1})$ is the maximal isometric extension of $(X_{\xi-1},T_{\xi-1})$ in $(X_{\xi+1},T_{\xi+1})$, which coincides with the maximal isometric extension $(X_\xi,T_\xi)$ of $(X_{\xi-1},T_{\xi-1})$ in $(X,T)$.
Thus by Theorem 3.7 of \cite{MMWu} the factor map of $(X_{\xi+1},T_{\xi+1})$ onto $(X_\xi,T_\xi)$ has connected fibres.

The same argument shows the connectedness of the fibres of the factor map $\pi_\zeta^{\gamma+1}$ in the case of a limit ordinal $1<\gamma<\eta$ and an ordinal $0\leq\zeta<\gamma$.
For every $x_\gamma\in X_\gamma$ we have that
\begin{equation*}
(\pi_\gamma^{\gamma+1})^{-1}(x_\gamma)=\bigcap_{0\leq\zeta<\gamma}(\pi_{\zeta+1}^{\gamma+1})^{-1}(\pi_{\zeta+1}^\gamma(x_\gamma)) ,
\end{equation*}
and therefore the fibre $(\pi_\gamma^{\gamma+1})^{-1}(x_\gamma)$ is the limit of a sequence of connected sets in a compact metric space, which is connected by \cite{K}, p.170, Theorem 14.
\end{proof}

We shall henceforth assume that $\{(X_\xi,T_\xi):0\leq\xi\leq\eta\}$ is the normal quasi-isometric system with $(X_\eta,T_\eta)=(X,T)$.
Moreover, for every ordinal $1\leq\xi<\eta$ we want to define a projection $f_\xi:X_\xi\longrightarrow\R$ of the function $f:X\longrightarrow\R$, which is associated to the factor map $\pi_\xi:(X,T)\longrightarrow (X_\xi,T_\xi)$.
These projections can be defined by families of probability measures, using the fact that every distal extension of compact metric flows is a so-called \emph{RIM}-extension (relatively invariant measure, cf. \cite{Gl}).
For an isometric extension this relatively invariant measure is unique (cf. \cite{Gl}), and we shall choose the most canonical family of measures for the distal extensions in the quasi-isometric system.
These families of measures obey to an integral decomposition formula within the quasi-isometric system.

\begin{proposition}\label{prop:meas}
For every ordinal $0\leq\xi\leq\eta$ there exists a family of probability measures $\{\mu_{\xi,x}:x\in X_\xi\}$ on $X$ so that for every $x\in X_\xi$ it holds true that
\begin{equation*}
\mu_{\xi,x}(\pi_\xi^{-1}(x))=1\enspace\textup{and}\enspace\mu_{\xi,x}\circ T^{-1}=\mu_{\xi, T_\xi x}.
\end{equation*}
Moreover, the mapping $x\mapsto\mu_{\xi,x}$ is continuous with respect to the weak-* topology on $C(X)^*$, and for a continuous function $\phi\in C(X)$ and ordinals $1\leq\xi<\zeta\leq\eta$ we have the equality that
\begin{equation}\label{eq:dec}
\mu_{\xi,x_\xi}(\phi)=\int_{X_\zeta}\mu_{\zeta,y_\zeta}(\phi)\, d(\mu_{\xi,x_\xi}\circ\pi_\zeta^{-1})(y_\zeta)
\end{equation}
for all $x_\xi\in X_\xi$.
\end{proposition}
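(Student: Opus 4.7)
The strategy is to construct the measures by iterating up the tower of isometric extensions. For each fixed $\xi$, I would build by upward transfinite recursion on $\zeta\in[\xi,\eta]$ a compatible family $\{\lambda_x^{\xi,\zeta}\}_{x\in X_\xi}$ of continuous $T$-equivariant probability measures on $X_\zeta$, with $\lambda_x^{\xi,\zeta}$ supported on $(\pi_\xi^\zeta)^{-1}(x)$, and then set $\mu_{\xi,x}:=\lambda_x^{\xi,\eta}$. The recursion starts with $\lambda_x^{\xi,\xi}:=\delta_x$. At a successor step $\zeta=\zeta'+1$, the isometric extension $\pi_{\zeta'}^{\zeta'+1}:X_{\zeta'+1}\to X_{\zeta'}$ has compact homogeneous fibre $M_{\zeta'}$, and the isometry group of $M_{\zeta'}$ provides the unique continuous $T$-equivariant RIM $\{\rho_y^{\zeta'}\}_{y\in X_{\zeta'}}$ of this extension via transfer of its normalised Haar measure, so one defines
\begin{equation*}
\lambda_x^{\xi,\zeta'+1}(\phi):=\int_{X_{\zeta'}}\rho_y^{\zeta'}(\phi)\,d\lambda_x^{\xi,\zeta'}(y).
\end{equation*}
At a limit ordinal $\zeta$, the already-built family $\{\lambda_x^{\xi,\alpha}\}_{\alpha<\zeta}$ is compatible under the maps $\pi_\alpha^\beta$, and since $X_\zeta=\varprojlim_{\alpha<\zeta}X_\alpha$, the projective limit construction for Radon probabilities on compact spaces delivers a unique $\lambda_x^{\xi,\zeta}$ on $X_\zeta$.

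The required support, equivariance, and weak-$*$ continuity of $\mu_{\xi,x}=\lambda_x^{\xi,\eta}$ propagate through the recursion. At successor stages this is immediate from the corresponding properties of $\rho_y^{\zeta'}$ and $\lambda_x^{\xi,\zeta'}$ together with dominated convergence; at limit stages it follows from the uniqueness of the projective limit combined with weak-$*$ compactness of the space of probability measures on each compact $X_\alpha$.

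For the decomposition formula (\ref{eq:dec}), the construction by design yields $\mu_{\xi,x}\circ\pi_\zeta^{-1}=\lambda_x^{\xi,\zeta}$ for $\xi\le\zeta\le\eta$, so the right-hand side of (\ref{eq:dec}) equals $\int_{X_\zeta}\lambda_y^{\zeta,\eta}(\phi)\,d\lambda_x^{\xi,\zeta}(y)$. One then checks that this coincides with $\lambda_x^{\xi,\eta}(\phi)=\mu_{\xi,x}(\phi)$ by a Fubini/associativity argument: iterating the RIMs from $\xi$ to $\zeta$ and then from $\zeta$ to $\eta$ produces the same measure on $X_\eta$ as iterating in one shot from $\xi$ to $\eta$. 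This associativity I would verify by transfinite induction on the length of the second leg, using uniqueness of the RIM at each isometric step to identify the two constructions.

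The principal delicate point, in my view, is the treatment of limit ordinals: one must verify both that the projective limit measure $\lambda_x^{\xi,\zeta}$ at a limit $\zeta$ is continuous and $T$-equivariant in $x$, and that the Fubini-type associativity behind (\ref{eq:dec}) passes through limit ordinals. For this the connectedness of fibres at limit ordinals, established by the preceding proposition via the normal quasi-isometric system, is an important structural ingredient; everything else is a routine measure-theoretic bookkeeping on top of the RIM uniqueness at isometric stages.
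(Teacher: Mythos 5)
Your construction is essentially the paper's own proof: transfinite recursion starting from point masses, integrating the unique isometry-invariant fibre measure at each successor (isometric) step via exactly the same integral formula, passing to the inverse limit of the compatible family at limit ordinals (the paper phrases this as a Hahn--Banach extension plus a compactness argument identifying a unique compatible measure, which is the same projective-limit construction you invoke), and then obtaining the decomposition formula by the same Fubini/associativity transfinite induction. One minor correction: the connectedness of fibres at limit ordinals is \emph{not} needed anywhere in this argument (it is used later in the paper to locate zeros of the incremental cocycles on fibres); the limit step requires only compactness and uniqueness of the compatible extension, exactly as in your projective-limit reasoning.
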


\begin{proof}
The proof follows the construction of an invariant measure for $(X,T)$ in Chapter 12 of \cite{Fu}.
For every ordinal $\zeta$ with $\xi\leq\zeta\leq\eta$ we shall construct a probability measure $\mu^\zeta_{\xi,x}$ on $X_\zeta$ with the required properties, but with $(X_\zeta,T_\zeta)$ replacing $(X,T)$.
For $\zeta=\xi$ we put $\mu^\xi_{\xi,x}=\delta_x$, the measure with mass one on the point $x\in X_\xi$.
Suppose that for an ordinal $\zeta$ with $\xi\leq\zeta<\eta$ there exists a suitable probability measure $\mu^\zeta_{\xi,x}$ on $X_\zeta$.
For every $y\in X_\zeta$ we let $\mu_{y,\zeta}^{\zeta+1}$ be the unique measure on the fibre $(\pi_\zeta^{\zeta+1})^{-1}(y)$, which is invariant under all isometries of the fibre.
In the proof of Proposition 12.1 on \cite{Fu} it is verified that the mapping $y\mapsto\mu_{y,\zeta}^{\zeta+1}$ is continuous with respect to the weak-* topology on $C(X_{\zeta+1})^*$ and that $\mu_{y,\zeta}^{\zeta+1}\circ T_{\zeta+1}^{-1}=\mu_{T_\zeta y,\zeta}^{\zeta+1}$.
Now we define a RIM for the extension $(X_{\zeta+1},T_{\zeta+1})$ of $(X_\xi,T_\xi)$ by putting
\begin{equation}\label{eq:ext}
\mu^{\zeta+1}_{\xi,x}(\phi)=\int_{X_\zeta}\mu_{\zeta,y}^{\zeta+1}(\phi)\, d\mu_{\xi,x}^\zeta (y)
\end{equation}
for every $\phi\in C(X_{\zeta+1})$.
It is easily verified that this measure meets the requirements.

Furthermore, given a limit ordinal $\gamma$ with $1\leq\xi<\gamma\leq\eta$ and RIM's $\mu^\zeta_{\xi,x}$ on $X_\zeta$ for all ordinals $\zeta$ with $\xi\leq\zeta<\gamma$, we need to prove that there exists also a RIM $\mu^\gamma_{\xi,x}$ on $X_\gamma$ with the required properties.
For an ordinal $\xi\leq\zeta<\gamma$ and $x\in X_\xi$ the measure $\mu^\zeta_{\xi,x}$ defines a linear functional on the subspace of $C(X_\gamma)$ given by functions of the form $\phi\circ\pi_\zeta^\gamma$ with $\phi\in C(X_\zeta)$.
This functional can be extended to a functional on $C(X_\gamma)$ without increasing its norm, giving rise to a probability measure on $X_\gamma$.
By the continuity of the map $x\mapsto\mu^\zeta_{\xi,x}$ the set
\begin{equation*}
K_\zeta=\{(x,\nu):\nu\circ(\pi_\zeta^\gamma)^{-1}=\mu^\zeta_{\xi,x}\}\subset X_\xi\times C(X_\gamma)^*
\end{equation*}
is closed and compact in the product topology of $X_\xi$ and the weak-* topology on $C(X_\gamma)^*$.
Therefore also the set $K=\cap_{\xi\leq\zeta<\gamma}K_\zeta$ is compact, and by the finite intersection property every section $K^x=\{\nu\in C(X_\gamma)^*:(x,\nu)\in K\}$ with $x\in X_\gamma$ is non-empty.
Furthermore, two distinguished elements $\nu_i\in K^x$, $i\in\{1,2\}$ can be distinguished by a continuous function on $X_\gamma$, and for every large enough ordinal $\zeta<\gamma$ as well by a continuous function of the form $\phi\circ\pi_\zeta^\gamma$ with $\phi\in C(X_\zeta)$.
This contradicts however that $\nu_i\circ(\pi_\zeta^\gamma)^{-1}=\mu^\zeta_{\xi,x}$, and the section $K^x$ is a singleton for every $x\in X_\gamma$.
Thus the set $K\subset X_\xi\times C(X_\gamma)^*$ is the closed graph of the continuous function $x\mapsto\mu^\gamma_{\xi,x}$ on $X_\xi$, and the assertion that $\mu^\gamma_{\xi,x}\circ T_\gamma^{-1}=\mu^\gamma_{\xi, T_\xi x}$ can be verified by the same approximation argument.

The existence of the RIM $\{\mu_{\xi,x}:x\in X_\xi\}$ follows now by transfinite induction, and it remains to prove equality (\ref{eq:dec}).
The condition that
\begin{equation}\label{eq:dec_2}
\mu^{\alpha}_{\xi,x_\xi}(\phi)=\int_{X_\zeta}\mu_{\zeta,y_\zeta}^{\alpha}(\phi)\, d\mu_{\xi,x_\xi}^\zeta (y_\zeta)
\end{equation}
for every $\phi\in C(X_\alpha)$ holds true for the ordinal $\alpha=\zeta+1$ by the definition (\ref{eq:ext}).
If the equality (\ref{eq:dec_2}) holds true for an ordinal $\alpha>\zeta$, then the equality (\ref{eq:ext}) implies that
\begin{eqnarray*}
\mu^{\alpha+1}_{\xi,x_\xi}(\phi) = \int_{X_\alpha}\mu_{\alpha,y_\alpha}^{\alpha+1}(\phi)\, d\mu_{\xi,x_\xi}^\alpha (y_\alpha)=\hspace{5.3cm}\\ = \int_{X_\zeta}\left(\int_{X_\alpha}\mu_{\alpha,z_\alpha}^{\alpha+1}(\phi)\, d\mu_{\zeta,y_\zeta}^{\alpha}(z_\alpha)\right)\, d\mu_{\xi,x_\xi}^\zeta (y_\zeta) = \int_{X_\zeta} \mu_{\zeta,y_\zeta}^{\alpha+1}(\phi)\, d\mu_{\xi,x_\xi}^\zeta (y_\zeta)
\end{eqnarray*}
holds as well for every $\phi\in C(X_{\alpha+1})$.
Moreover, we can extend the equality (\ref{eq:dec_2}) with the approximation argument above also to a limit ordinal $\gamma$ with $\zeta<\gamma\leq\eta$, and the equality (\ref{eq:dec}) follows now by transfinite induction.
\end{proof}

Now we can define a continuous function $f_\xi:X_\xi\longrightarrow\R$ for every ordinal $\xi$ with  $1\leq\xi<\eta$ by
\begin{equation*}
f_\xi(x_\xi)=\mu_{\xi,x_\xi}(f) ,
\end{equation*}
and we can compute for ordinals $\xi$, $\zeta$ with $1\leq\xi<\zeta\leq\eta$ and an integer $n\neq 0$ that
\begin{eqnarray*}
(f_\zeta-f_\xi\circ\pi_\xi^\zeta)(n,x_\zeta) & = & \sum_{k=0}^{n-1}\left(\mu_{\zeta,T_\zeta^k x_\zeta}(f)-\mu_{\xi,\pi_\xi^\zeta(T_\zeta^k x_\zeta)}(f)\right)\\
& = & \sum_{k=0}^{n-1}\left(\mu_{\zeta,x_\zeta}(f\circ T^k)-\mu_{\xi,\pi_\xi^\zeta(x_\zeta)} (f\circ T^k)\right)\\
& = & \mu_{\zeta,x_\zeta}(f(n,\cdot))- \mu_{\xi,\pi_\xi^\zeta(x_\zeta)} (f(n,\cdot))
\end{eqnarray*}
holds true for every $x_\zeta\in X_\zeta$.
Hence the integral by the measure $d(\mu_{\xi,x_\xi}\circ\pi_\zeta^{-1})$, which is supported by the fibre $(\pi_\xi^\zeta)^{-1}(x_\xi)\subset X_\zeta$, turns out to be zero for every $x_\xi\in X_\xi$:
\begin{eqnarray*}
\int_{X_\xi}(f_\zeta-f_\xi\circ\pi_\xi^\zeta)(n,x_\zeta)\, d(\mu_{\xi,x_\xi}\circ\pi_\zeta^{-1})(x_\zeta)=\hspace{3cm}\\
=\int_{X_\xi}\left( \mu_{\zeta,x_\zeta}(f(n,\cdot))\right)\, d(\mu_{\xi,x_\xi}\circ\pi_\zeta^{-1})(x_\zeta)- \mu_{\xi,\pi_\xi^\zeta(x_\zeta)} (f(n,\cdot))=0
\end{eqnarray*}
The connectedness of the $\pi_\xi^\zeta$-fibres for $1\leq\xi<\zeta\leq\eta$ implies now that the function $(f_\zeta-f_\xi\circ\pi_\xi^\zeta)(n,x_\zeta)$ has a zero in the fibre $(\pi_\xi^\zeta)^{-1}(x_\xi)$ for every integer $n$ and every $x_\xi\in X_\xi$.
This property will be essential in the proofs of the Lemmas \ref{lem:c_t} and \ref{lem:lim}.
Furthermore, the following representation of isometric extensions in terms of compact metric group extensions will be essential:

\begin{fact}\label{fact:iso}
A minimal isometric extension $(Z,R)$ of a minimal compact metric flow $(Y,S)=\sigma(Z,R)$ can be represented by a minimal isometric group extension $(\tilde Z,\tilde R)$ of $(Y,S)=\tilde \sigma(\tilde Z,\tilde R)$, with a compact metric group $K\subset\textup{Aut}(\tilde Z,\tilde R)$ acting freely on the fibres $\tilde\sigma^{-1}(\tilde\sigma(\tilde z))=\{g(\tilde z):g\in K\}$ for every $\tilde z\in\tilde Z$, and then taking the orbit space by a closed subgroup $H$ of $K$ (cf. chapter 5 in \cite{GlWe}).
\end{fact}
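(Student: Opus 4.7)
The plan is to construct $(\tilde Z,\tilde R)$ as a principal bundle lifting the isometric $M$-bundle $\sigma\colon Z\to Y$, and to recover $Z$ as a quotient. Fix a reference point $y_0\in Y$, set $M=\sigma^{-1}(y_0)$ with the fibre metric $\rho$, and let $K=\textup{Iso}(M,\rho)$, which is a compact metric group in the uniform topology. Define
\begin{equation*}
\tilde Z=\bigl\{(y,\phi):y\in Y,\;\phi\colon\sigma^{-1}(y)\to M\enspace\textup{is an isometry}\bigr\},
\end{equation*}
equipped with the natural compact Hausdorff topology (whereby $(y_\iota,\phi_\iota)\to(y,\phi)$ means $y_\iota\to y$ and the graphs of $\phi_\iota$ converge to the graph of $\phi$ in $Z\times M$), projection $\tilde\sigma(y,\phi)=y$, and $K$-action $k\cdot(y,\phi)=(y,k\circ\phi)$ which is free and fibrewise transitive. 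Since $R|_{\sigma^{-1}(y)}$ is an isometry onto $\sigma^{-1}(Sy)$, the formula $\tilde R(y,\phi)=(Sy,\phi\circ(R|_{\sigma^{-1}(y)})^{-1})$ defines a self-homeomorphism of $\tilde Z$ commuting with the $K$-action, so $K\subseteq\textup{Aut}(\tilde Z,\tilde R)$.

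Next I would pass to a minimal subflow $\tilde Z_0\subseteq\tilde Z$ (which exists by Zorn's lemma and projects onto $Y$ by minimality of $(Y,S)$), and introduce the closed subgroup $K_0=\{k\in K:k\cdot\tilde Z_0=\tilde Z_0\}$ of $K$. The central technical step is to prove that $K_0$ acts transitively on every fibre of $\tilde\sigma|_{\tilde Z_0}$, so that $\tilde\sigma|_{\tilde Z_0}\colon\tilde Z_0\to Y$ is a principal $K_0$-bundle, i.e.\ a minimal isometric group extension in the required sense. Granting this, fix $m_0\in M$, set $H=\textup{Stab}_{K_0}(m_0)\subseteq K_0\subseteq K$, and define the evaluation map $\textup{ev}\colon\tilde Z_0\to Z$ by $\textup{ev}(y,\phi)=\phi^{-1}(m_0)$. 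This map is continuous, $(\tilde R,R)$-equivariant, surjective by the fibrewise transitivity of $K_0$, and its fibres are precisely the $H$-orbits, so it induces a homeomorphism $\tilde Z_0/H\cong Z$ compatible with the dynamics.

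The main obstacle is the fibrewise transitivity of $K_0$ on $\tilde Z_0$. Given two frames $(y,\phi_1),(y,\phi_2)\in\tilde Z_0$ over a common $y$, one must show that the unique $k=\phi_2\circ\phi_1^{-1}\in K$ belongs to $K_0$. The standard approach uses minimality of $\tilde Z_0$ to produce a return-time net $n_\iota$ with $\tilde R^{n_\iota}(y,\phi_1)\to(y,\phi_1)$; because each $\tilde R^{n_\iota}$ restricts to an isometry between fibres, the equicontinuous family $\{\tilde R^{n_\iota}\}$ admits, via the trivialisations $\phi_1$ and $\phi_2$, a cluster point which is an element of $K$ that preserves $\tilde Z_0$ setwise and sends $(y,\phi_1)$ to $(y,\phi_2)$. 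This group-rigidity argument, combined with the fact that in a minimal isometric extension the Ellis group of fibrewise near-identities is a genuine topological group, is the heart of the representation theorem and is carried out in detail in Chapter~5 of \cite{GlWe}.
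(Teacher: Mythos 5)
The paper itself offers no proof of this Fact --- it is quoted with the reference to chapter 5 of \cite{GlWe} --- so the question is only whether your frame-bundle construction is sound. Its architecture is exactly the standard one underlying that reference: the space of fibre isometries onto a model fibre $M$, the compact group $K=\textup{Iso}(M,\rho)$ acting freely and fibrewise transitively and commuting with $\tilde R$, passage to a minimal subflow $\tilde Z_0$ with setwise stabiliser $K_0$, and recovery of $Z$ as the $H$-orbit space via evaluation at a base point. Two background points should at least be mentioned: compactness of $\tilde Z$ in the graph topology needs the fibre map $y\mapsto\sigma^{-1}(y)$ to be lower semicontinuous, i.e. $\sigma$ to be open, which holds because an isometric extension of a minimal flow is distal and hence open (a fact the paper itself invokes elsewhere); and the $\tilde R$-invariant fibre metric on $\tilde Z_0$ (e.g. the sup-distance of frames over a common base point) is what makes the group extension \emph{isometric} in the sense of the Fact.

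The two steps that do not work as written are precisely the ones you flag as central. First, the transitivity of $K_0$ on the fibres of $\tilde Z_0$: a return net with $\tilde R^{n_\iota}(y,\phi_1)\to(y,\phi_1)$ can only produce in the limit a map fixing $(y,\phi_1)$, which by freeness of the $K$-action cannot be the element sending $(y,\phi_1)$ to $(y,\phi_2)$; the appeal to equicontinuity and ``trivialisations'' does not bridge the fact that the returning points lie over the moving base points $S^{n_\iota}y$, and deferring the ``heart'' back to \cite{GlWe} makes the proposal circular relative to the paper's own citation. The step has, however, a one-line proof which you should substitute: the element $k=\phi_2\circ\phi_1^{-1}\in K$ commutes with $\tilde R$, so $k(\tilde Z_0)$ is again a minimal set; it contains $k\cdot(y,\phi_1)=(y,\phi_2)\in\tilde Z_0$, and since distinct minimal sets are disjoint, $k(\tilde Z_0)=\tilde Z_0$, i.e. $k\in K_0$. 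Second, the surjectivity of $\textup{ev}$: fibrewise transitivity of $K_0$ only shows that the image of $\textup{ev}$ over $y$ is $\psi^{-1}(K_0\,m_0)$ for a frame $\psi$, which is the whole fibre only if $K_0$ acts transitively on $M$ --- something you have not established. The correct (and immediate) justification is that $\textup{ev}$ is continuous and equivariant, so its image is a closed invariant subset of the \emph{minimal} flow $(Z,R)$ and hence all of $Z$. With these repairs the identification of the $\textup{ev}$-fibres with $H$-orbits, and thus $\tilde Z_0/H\cong Z$, goes through as you describe.
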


The ``local'' behaviour of an isometric \emph{group} extension is similar to a skew product extension by a compact metric group, even if the global structure might be different in the sense that the space does not split into a product.

\begin{lemma}\label{lem:iso}
Let $(\tilde Z,\tilde R)$ be a minimal isometric \emph{group} extension of $(Y,S)=\tilde \sigma(\tilde Z,\tilde R)$.
Then for every $\varepsilon>0$ there exists a $\delta>0$ so that $d_{\tilde Z} (\tilde x,\tilde R^n \tilde x)<\delta$ for $\tilde x\in\tilde Z$ and an integer $n$ implies that $d_{\tilde Z} (\tilde y, \tilde R^n\tilde y)<\varepsilon$ for every $\tilde y\in\tilde Z$ with $\tilde\sigma(\tilde y)=\tilde\sigma(\tilde x)$.
\end{lemma}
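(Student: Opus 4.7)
The plan is to exploit the compact group $K \subset \textup{Aut}(\tilde Z,\tilde R)$ supplied by Fact \ref{fact:iso} together with the commutation of $K$ with $\tilde R$. Since the $K$-action is free and transitive on each $\tilde\sigma$-fibre, every pair $\tilde x$, $\tilde y$ with $\tilde\sigma(\tilde x)=\tilde\sigma(\tilde y)$ can be written as $\tilde y = g(\tilde x)$ for a (unique) $g\in K$. Because $g\in\textup{Aut}(\tilde Z,\tilde R)$ commutes with $\tilde R$, we have
\begin{equation*}
\tilde R^n\tilde y = \tilde R^n g(\tilde x) = g(\tilde R^n \tilde x),
\end{equation*}
so the task reduces to controlling $d_{\tilde Z}(g(\tilde x),g(\tilde R^n\tilde x))$ in terms of $d_{\tilde Z}(\tilde x,\tilde R^n\tilde x)$ uniformly in $g\in K$.

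The heart of the argument is therefore the \emph{equicontinuity} of the family $K$ acting on $\tilde Z$. By hypothesis $K$ carries the subspace topology inherited from $\textup{Aut}(\tilde Z,\tilde R)$ (uniform convergence) and is compact in that topology; hence $K$ is a relatively compact subset of $C(\tilde Z,\tilde Z)$. Since $\tilde Z$ is compact metric, the Arzel\`a--Ascoli theorem forces such a subset to be equicontinuous. Consequently, for any given $\varepsilon>0$ there exists $\delta>0$ such that
\begin{equation*}
d_{\tilde Z}(z_1,z_2)<\delta \enspace\Longrightarrow\enspace d_{\tilde Z}(g(z_1),g(z_2))<\varepsilon \quad\text{for every } g\in K.
\end{equation*}

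Combining these two ingredients finishes the proof: if $d_{\tilde Z}(\tilde x,\tilde R^n\tilde x)<\delta$, then for every $\tilde y\in\tilde\sigma^{-1}(\tilde\sigma(\tilde x))$ we pick the unique $g\in K$ with $\tilde y = g(\tilde x)$ and obtain
\begin{equation*}
d_{\tilde Z}(\tilde y,\tilde R^n\tilde y) = d_{\tilde Z}\bigl(g(\tilde x),g(\tilde R^n\tilde x)\bigr)<\varepsilon.
\end{equation*}
There is really no substantive obstacle here; the only subtle point is being careful that the required uniformity over $g\in K$ comes \emph{for free} from compactness of $K$ in the uniform topology, rather than having to be argued by hand from a concrete description of the fibre metric $\rho$. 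One could alternatively derive the same conclusion from joint continuity of the action $K\times\tilde Z\to\tilde Z$ on the compact product $K\times\tilde Z$, which again yields uniform continuity in the $\tilde Z$-variable uniformly in $g\in K$; the Arzel\`a--Ascoli formulation is simply the cleanest way to extract exactly the statement we need.
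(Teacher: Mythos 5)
Your proof is correct and follows essentially the same route as the paper: write $\tilde y=g(\tilde x)$ for the unique $g$ in the compact group $K\subset\textup{Aut}(\tilde Z,\tilde R)$ acting freely and transitively on the fibre, use that $g$ commutes with $\tilde R$, and invoke the uniform equicontinuity of $K$ acting on $\tilde Z$. The paper simply asserts that equicontinuity, whereas you justify it via Arzel\`a--Ascoli (or joint continuity of the action), which is a harmless elaboration of the same argument.
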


\begin{proof}
A compact metric group $K\subset\textup{Aut}(\tilde Z,\tilde R)$ acting freely on the fibres defines a uniformly equicontinuous set of homeomorphisms of $\tilde Z$.
Thus there exists a $\delta>0$ so that for all $\tilde x,\tilde y\in\tilde Z$ with $d_{\tilde Z}(\tilde x,\tilde y))<\delta$ and all $g\in K$ it holds true that $d_{\tilde Z}(g(\tilde x),g(\tilde y))<\varepsilon$.
For a point $\tilde x\in\tilde Z$ and an integer $n$ with $d_{\tilde Z} (\tilde x,\tilde R^n \tilde x)<\delta$ it follows then for all $g\in K$ that $d_{\tilde Z}(g(\tilde x),g(\tilde R^n\tilde x))=d_{\tilde Z}(g(\tilde x),\tilde R^n g(\tilde x))<\varepsilon$, and as the $K$-orbit of $\tilde x$ is all of $\tilde\sigma^{-1}(\tilde\sigma(\tilde x))$ the lemma is verified.
\end{proof}

We want to use these tools to study how the dynamical properties of the skew product extensions $\mathbf T_{\xi, f_\xi}:X_\xi\times\R\longrightarrow X_\xi\times\R$ change over the ordinals $0\leq\xi\leq\eta$.
At first we want to consider the step from an ordinal to its successor.

\begin{lemma}\label{lem:c_t}
Let $\gamma$ be an ordinal with $1\leq\gamma<\eta$.
Then the cocycle 
\begin{equation*}
(f_{\gamma+1}-f_\gamma\circ\pi_\gamma^{\gamma+1})(n,x_{\gamma+1})
\end{equation*}
is either a coboundary or it has a topologically transitive skew product.
Therefore if $f_\gamma(n,x_\gamma)$ is a coboundary, then $f_{\gamma+1}(n,x_{\gamma+1})$ is either a coboundary or it has a topologically transitive skew product.
Furthermore, if $f_\gamma(n,x_\gamma)$ is transient, then $f_{\gamma+1}(n,x_{\gamma+1})$ is either transient or it has a topologically transitive skew product.
\end{lemma}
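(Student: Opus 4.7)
The plan is to establish the three separate assertions of the lemma: the dichotomy for the cocycle $h:=f_{\gamma+1}-f_\gamma\circ\pi_\gamma^{\gamma+1}$, and then the two consequences under the hypotheses that $f_\gamma$ is either a coboundary or transient. The main tools are Lemma \ref{lem:at}(i) applied to the relevant cocycle, Fact \ref{fact:iso} together with Lemma \ref{lem:iso} to transfer close returns between $X_\gamma$ and the whole fibre in $X_{\gamma+1}$, the connectedness of the $\pi_\gamma^{\gamma+1}$-fibres guaranteed by the normality of the quasi-isometric system, and the key observation established immediately before the lemma that the cocycle $h(n,\cdot)$ has a zero on every such fibre.

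For the dichotomy I would assume that $\mathbf T_{\gamma+1,h}$ is not topologically transitive and derive that $h$ is a coboundary. Lemma \ref{lem:at}(i) supplies a compact symmetric $K=[-c,c]$ and $\varepsilon>0$ so that the set
\begin{equation*}
\{x\in X_{\gamma+1}:d_{\gamma+1}(x,T_{\gamma+1}^n x)<\varepsilon\text{ and }h(n,x)\in 2K\setminus K^0\}
\end{equation*}
is empty for every integer $n$. Passing to the group extension representation of $\pi_\gamma^{\gamma+1}$ from Fact \ref{fact:iso} and applying Lemma \ref{lem:iso}, I pick $\delta>0$ such that $d_\gamma(x_\gamma,T_\gamma^n x_\gamma)<\delta$ implies $d_{\gamma+1}(y,T_{\gamma+1}^n y)<\varepsilon$ for \emph{every} $y$ in the fibre $(\pi_\gamma^{\gamma+1})^{-1}(x_\gamma)$. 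For any $n$ with such a close return, the continuous restriction of $h(n,\cdot)$ to the connected fibre has a connected image containing $0$ and avoiding $2K\setminus K^0=[-2c,-c]\cup[c,2c]$, so that image must lie in $(-c,c)$; thus $|h(n,y)|<c$ throughout the fibre. Since the return time set $\{n:d_\gamma(x_\gamma,T_\gamma^n x_\gamma)<\delta\}$ is syndetic by minimality of $T_\gamma$, any integer $m$ decomposes as $m=n+r$ with $n$ in this set and $|r|$ bounded by the syndetic gap $L$; the cocycle identity and the boundedness of the finitely many continuous translates $h(r,\cdot)$ with $|r|\leq L$ yield a uniform bound on $|h(m,y_0)|$ for a fixed $y_0$ and all $m\in\Z$. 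The Gottschalk–Hedlund theorem (cf.\ \cite{G-H}) for the minimal homeomorphism $T_{\gamma+1}$ then forces $h$ to be a coboundary.

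The coboundary case of the consequences is immediate from the dichotomy: if $f_\gamma=\tilde b\circ T_\gamma-\tilde b$, then $f_\gamma\circ\pi_\gamma^{\gamma+1}=\tilde b\circ\pi_\gamma^{\gamma+1}\circ T_{\gamma+1}-\tilde b\circ\pi_\gamma^{\gamma+1}$ is a coboundary on $X_{\gamma+1}$, hence $f_{\gamma+1}=h+f_\gamma\circ\pi_\gamma^{\gamma+1}$ is a coboundary when $h$ is, and is cohomologous to the topologically transitive $h$ otherwise, yielding a topologically transitive skew product. For the transient case I would rerun the dichotomy argument almost verbatim with $f_{\gamma+1}$ in the rôle of $h$: assuming $\mathbf T_{\gamma+1,f_{\gamma+1}}$ is not topologically transitive, Lemma \ref{lem:at}(i) supplies $K=[-c,c]$ and $\varepsilon>0$, and the transience of $f_\gamma$ combined with Remarks \ref{rems:rec} yields $\delta'\leq\delta$ such that $|f_\gamma(n,x_\gamma)|>2c$ whenever $d_\gamma(x_\gamma,T_\gamma^n x_\gamma)<\delta'$. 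On the fibre over such an $x_\gamma$ the equality $f_{\gamma+1}(n,\cdot)=h(n,\cdot)+f_\gamma(n,x_\gamma)$ shows that the image of $f_{\gamma+1}(n,\cdot)$ is a connected interval containing $f_\gamma(n,x_\gamma)$ (since $h(n,\cdot)$ still hits $0$) and avoiding $2K\setminus K^0$, hence contained in $(2c,\infty)$ or in $(-\infty,-2c)$. A hypothetical close-return sequence of $\mathbf T_{\gamma+1,f_{\gamma+1}}$ with $f_{\gamma+1}(n_k,y_k)\to 0$ would project to close returns in $X_\gamma$ of arbitrarily small distance, eventually forcing $|f_{\gamma+1}(n_k,y_k)|>2c$, a contradiction; therefore $f_{\gamma+1}$ is also transient.

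The main obstacle is transferring the close-return condition from the base to the \emph{entire} fibre rather than just at a single point, since the zero of $h(n,\cdot)$ on the fibre need not coincide with the point where the close return was first witnessed. This is precisely what forces the use of the group extension formulation of isometric extensions in Fact \ref{fact:iso} together with the uniform equicontinuity statement of Lemma \ref{lem:iso}; combined with connectedness of fibres in the normal quasi-isometric system and the vanishing of $h$ on every fibre, they convert the pointwise forbidden-zone statement of Lemma \ref{lem:at}(i) into the fibrewise interval control which drives all three parts of the argument.
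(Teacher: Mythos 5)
The pivotal step of your argument is false as stated. You claim that Fact \ref{fact:iso} and Lemma \ref{lem:iso} supply a $\delta>0$ such that a return $d_\gamma(x_\gamma,T_\gamma^n x_\gamma)<\delta$ \emph{in the base} already forces $d_{\gamma+1}(y,T_{\gamma+1}^n y)<\varepsilon$ for \emph{every} $y$ in the fibre $(\pi_\gamma^{\gamma+1})^{-1}(x_\gamma)$. Lemma \ref{lem:iso} says something weaker: its hypothesis is a $\delta$-return of a point $\tilde x$ of the group extension itself, and only then does the equicontinuity of the fibre group spread the return over that fibre. A close return downstairs does not lift: take $X_\gamma=\T$ with an irrational rotation by $\alpha$ and $X_{\gamma+1}=\T^2$ with the Anzai extension $T_{\gamma+1}(x,y)=(x+\alpha,y+x)$, a distal minimal isometric group extension. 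Then $T_{\gamma+1}^n(x,y)=(x+n\alpha,\,y+nx+\binom{n}{2}\alpha)$, so whenever $\|n\alpha\|<\delta$ every base point is a $\delta$-return point, yet the fibre displacement $nx+\binom{n}{2}\alpha$ sweeps the whole circle as $x$ varies; over most base points no point of the fibre returns $\varepsilon$-close. Since, as you note yourself, this fibrewise closeness ``drives all three parts'', the gap infects the dichotomy (Lemma \ref{lem:at}(i) must be applicable at every point of the fibre, not only where a return happens to be witnessed), the syndeticity-plus-Gottschalk--Hedlund bound (the set $\{n:d_\gamma(x_\gamma,T_\gamma^n x_\gamma)<\delta\}$ gives you no control of $h(n,\cdot)$ on the fibre), and the transient case.

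The repair is to witness the close returns upstairs. The paper applies Lemma \ref{lem:at} to the lifted cocycle $h\circ\tau$ on the minimal group extension $(\tilde X,\tilde T)$ of $(X_\gamma,T_\gamma)$ and feeds Lemma \ref{lem:iso} only with sequences satisfying $\tilde d(\tilde x_k,\tilde T^{n_k}\tilde x_k)\to 0$; the zero of $h(n_k,\cdot)$ on the connected fibre then forces $(h\circ\tau)(n_k,\tilde x_k)\to 0$ along every such sequence, and Proposition 3.4 of \cite{LM} gives the coboundary. Your syndeticity-plus-Gottschalk--Hedlund finish would work equally well, but only for the return times of a fixed point of $\tilde X$, not of $X_\gamma$. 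In the transient case one must in addition manufacture upstairs close returns out of the assumed recurrence of $f_{\gamma+1}$: the paper lifts a recurrent point of $\mathbf T_{\gamma+1,f_{\gamma+1}}$ to $\tilde X$, extracts a convergent subsequence $\tilde T^{m_{k_l}}\tilde x$, and uses the difference times $m_{k_{l+1}}-m_{k_l}$ at the points $\tilde T^{m_{k_l}}\tilde x$. Your outline, which only projects the hypothetical returns down to $X_\gamma$ and then invokes the non-existent $\delta$, is missing exactly this lifting step.
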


\begin{proof}
Let $K\subset\textup{Aut}(\tilde X,\tilde T)$ be a compact metric group extension of $(X_\gamma, T_\gamma)$ with a compact subgroup $H\subset K$ so that $(X_{\gamma+1},T_{\gamma+1})=\tau(\tilde X,\tilde T)$ is the $H$-orbit space in $(\tilde X,\tilde T)$.
If the skew product extension of $h(n,x_{\gamma+1})=(f_{\gamma+1}-f_\gamma\circ\pi_\gamma^{\gamma+1})(n,x_{\gamma+1})$ is not topologically transitive, then transitivity is also not valid for the skew product $\mathbf{\tilde T}_{h\circ\tau}$.
By Lemma \ref{lem:at} there exist thus a compact symmetric neighbourhood $L\subset\R$ of zero and an $\varepsilon>0$, so that $\tilde d(\tilde x, \tilde T^n \tilde x)<\varepsilon$ for $\tilde x\in\tilde X$ and $n\in\Z$ implies that $(h\circ\tau)(n,\tilde x)\notin 2L\setminus L^0$.
Moreover, by Lemma \ref{lem:iso} there exists a $\delta>0$, so that $\tilde d(\tilde x,\tilde T^n \tilde x)<\delta$ for some $\tilde x\in\tilde X$ and $n\in\Z$ is sufficient for $\tilde d(\tilde y,\tilde T^n\tilde y)<\varepsilon$ for every $\tilde y\in\tilde X$ with $\pi_\gamma^{\gamma+1}\circ\tau(\tilde y)=\pi_\gamma^{\gamma+1}\circ\tau(\tilde x)$.
Now let $\{(n_k,\tilde x_k)\}_{k\geq 1}\subset\Z\times \tilde X$ be a sequence with $\tilde d(\tilde x_k, \tilde T^{n_k} \tilde x_k)\to 0$.
The cocycle $(f_{\gamma+1}-f_\gamma\circ\pi_\gamma^{\gamma+1})(n_k,y)$ has a zero $y_k$ and a connected range on the fibre $(\pi_\gamma^{\gamma+1})^{-1}(\pi_\gamma^{\gamma+1}(\tau(\tilde x_k)))$, and thus for all $k\geq 1$ with $\tilde d(\tilde x_k,\tilde T^{n_k}\tilde x_k)<\delta$ the assertion that $(h\circ\tau)(n_k,\tilde x_k)\notin 2L\setminus L^0$ implies that $(h\circ\tau)(n_k,\tilde x_k)\in L$.
This argument can be repeated with an arbitrarily small neighbourhood $L$, and therefore we have the convergence that $(h\circ\tau)(n_k,\tilde x_k)\to 0$ as $k\to\infty$. 
By Proposition 3.4 in \cite{LM} the cocycle $(h\circ\tau)(n,\tilde x)$ is a coboundary, and from the uniform boundedness of $(h\circ\tau)(n,\tilde x)$ follows the uniform boundedness of the cocycle $h(n,x_{\gamma+1})$, which is hence also a coboundary.

Now suppose that $f_\gamma(n,x_\gamma)$ is transient, while $f_{\gamma+1}(n,x_{\gamma+1})$ is recurrent and its skew product is not transitive.
Then the skew product $(f_{\gamma+1}\circ\tau)(n,\tilde x)$ is as well not transitive, and again we choose a compact symmetric neighbourhood $L\subset\R$ of zero and an $\varepsilon>0$ (cf. Lemma \ref{lem:at}) so that $\tilde d(\tilde x,\tilde T^n\tilde x)<\varepsilon$ for $\tilde x\in\tilde X$ and $n\in\Z$ implies that $(f_{\gamma+1}\circ\tau)(n,\tilde x)\notin 2L\setminus L^0$.
By Lemma \ref{lem:iso} there exists a $\delta>0$ so that $\tilde d(\tilde x,\tilde T^n\tilde x)<\delta$ for some $\tilde x\in\tilde X$ and $n\in\Z$ is sufficient for $\tilde d(\tilde y,\tilde T^n\tilde y)<\varepsilon$ for every $\tilde y\in\tilde X$ with $\pi_\gamma^{\gamma+1}\circ\tau(\tilde y)=\pi_\gamma^{\gamma+1}\circ\tau(\tilde x)$.
If $\{(n_k,\tilde x_k)\}_{k\geq 1}\subset\Z\times\tilde X$ is a sequence with $\tilde d(\tilde x_k,\tilde T^{n_k}\tilde x_k)\to 0$, then the transience of $f_\gamma(n,x_\gamma)$ implies that $(f_\gamma\circ\pi_\gamma^{\gamma+1}\circ\tau)(n_k,\tilde x_k)\notin 2L$ for all large enough integers $k$ (cf. Remark \ref{rems:rec}).
The cocycle $(f_{\gamma+1}-f_\gamma\circ\pi_\gamma^{\gamma+1})(n_k,y)$ has a zero $y_k\in(\pi_\gamma^{\gamma+1})^{-1}(\pi_\gamma^{\gamma+1}\circ\tau(\tilde x_k))$, and for $\tilde y_k\in\tau^{-1}(y_k)$ it holds true that $(f_{\gamma+1}\circ\tau)(n_k,\tilde y_k)\notin 2L$.
We obtain from the choice of $\delta$ and the connectedness of the $(f_{\gamma+1}\circ\tau)(n_k,\tilde x)$-range on the $(\pi_\gamma^{\gamma+1}\circ\tau)$-fibre that $(f_{\gamma+1}\circ\tau)(n_k,\tilde x_k)\notin 2L$ for all large enough integers $k$.
However, given a point $x_{\gamma+1}\in X_{\gamma+1}$ and a sequence $\{m_k\}_{k\geq 1}$ of integers with $\mathbf T_{\gamma+1,f_{\gamma+1}}^{m_k}(x_{\gamma+1},0)\to(x_{\gamma+1},0)$, we can choose a point $\tilde x\in\tau^{-1}(x_{\gamma+1})$ and a subsequence $\{m_{k_l}\}_{l\geq 1}\subset\Z$ so that $\{\tilde T^{m_{k_l}}\tilde x\}_{l\geq 1}$ is convergent in $\tilde X$.
Now a contradiction occurs for the sequence $\{(n_k,\tilde x_k)=(m_{k_{l+1}}-m_{k_l},\tilde T^{m_{k_l}}\tilde x)\}_{k\geq 1}\subset\Z\times\tilde X$.
\end{proof}

The arguments are somehow similar in the case of a limit ordinal, but instead of Lemma \ref{lem:iso} an approximation of the limit ordinal will be applied.

\begin{lemma}\label{lem:lim}
Let $\gamma$ be a limit ordinal with $1<\gamma\leq\eta$.
If $f_\xi(n,x_\xi)$ is a coboundary for all $1\leq\xi<\gamma$, then $f_\gamma(n,x_\gamma)$ is either a coboundary or it has a topologically transitive skew product extension.
If there exists an ordinal $1\leq\zeta<\gamma$ so that $f_\xi(n,x_\xi)$ is transient for all $\zeta\leq\xi<\gamma$, then $f_\gamma(n,x_\gamma)$ is either transient or it has a topologically transitive skew product extension.
Furthermore, if for every ordinal $1<\zeta<\gamma$ there exists an ordinal $\zeta\leq\xi<\gamma$ so that $f_\xi(n,x_\xi)$ has a topologically transitive skew product extension, then $f_\gamma(n,x_\gamma)$ has a topologically transitive skew product extension.
\end{lemma}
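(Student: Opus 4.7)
The plan is to adapt the proof of Lemma \ref{lem:c_t}, replacing Lemma \ref{lem:iso} (which is unavailable since $\pi_\zeta^\gamma$ is not an isometric extension at a limit ordinal) with the following inverse-limit observation: for every $\varepsilon>0$ there exists $\zeta<\gamma$ so that every $\pi_\zeta^\gamma$-fibre has diameter less than $\varepsilon$. This follows from a diagonal argument against the inverse-limit structure $X_\gamma=\varprojlim X_\xi$: otherwise one could extract pairs $(a_\xi,b_\xi)$ in common $\pi_\xi^\gamma$-fibres with $d(a_\xi,b_\xi)\ge\varepsilon$ accumulating to $a\neq b$ with $\pi_\xi^\gamma(a)=\pi_\xi^\gamma(b)$ for every $\xi<\gamma$. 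Because $\pi_\xi^\gamma$-fibres shrink as $\xi\to\gamma$, this smallness can be combined with any cofinality requirement. Throughout, I intend to work with $h_\zeta(n,x_\gamma):=(f_\gamma-f_\zeta\circ\pi_\zeta^\gamma)(n,x_\gamma)$ and exploit its zero on every $\pi_\zeta^\gamma$-fibre (established just before Lemma \ref{lem:c_t}) together with the connectedness of those fibres.

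For the first assertion, I would fix $L=[-r,r]\subset U$ and $\varepsilon>0$ as in Lemma \ref{lem:at}(i), pick $\zeta$ with $\pi_\zeta^\gamma$-fibre diameter below $\varepsilon/3$, and use uniform continuity of a transfer function of the coboundary $f_\zeta$ to ensure $|f_\zeta(n,\pi_\zeta^\gamma x_\gamma)|<r/4$ whenever $d(x_\gamma,T_\gamma^n x_\gamma)$ is sufficiently small. The step substituting for Lemma \ref{lem:iso} is that $T_\gamma^n$ maps every $\pi_\zeta^\gamma$-fibre onto another such fibre of diameter $<\varepsilon/3$, so the triangle inequality gives $d(z,T_\gamma^n z)<\varepsilon$ for \emph{every} $z$ in the fibre through $x_\gamma$, placing all fibre values of $f_\gamma(n,\cdot)$ outside $2L\setminus L^0$. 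The zero of $h_\zeta(n,\cdot)$ on the fibre produces a value of $f_\gamma(n,\cdot)$ in $(-r/4,r/4)\subset L^0$, so connectedness traps the entire fibre range in $L^0\subset U$. Hence $f_\gamma(n,x_\gamma)\in U$ under the stated smallness, and Proposition~3.4 of \cite{LM} then forces $f_\gamma$ to be a coboundary.

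For the second assertion, I would choose $\zeta\in[\zeta_0,\gamma)$ with fibre diameter below $\varepsilon/3$ and use transience of $f_\zeta$ to obtain $\delta>0$ with $d(x_\zeta,T_\zeta^n x_\zeta)<\delta$, $n\neq 0$, forcing $|f_\zeta(n,x_\zeta)|>2r$. If $f_\gamma$ were recurrent and non-transitive, a recurrence sequence $(n_k,x_k)$ with $f_\gamma(n_k,x_k)\to 0$ would, via the zero of $h_\zeta(n_k,\cdot)$ on the fibre through $x_k$, furnish a point with $|f_\gamma(n_k,\cdot)|>2r$; yet the uniform fibre estimate again puts all fibre values outside $2L\setminus L^0$, so the connected range would have to lie in a single component of $\R\setminus(2L\setminus L^0)$. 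This contradicts the coexistence of a value of absolute value $>2r$ and the value $f_\gamma(n_k,x_k)$ sitting near $0$.

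For the third assertion I proceed directly. Given a non-empty open $\mc V\subset X_\gamma$ and a ball $B(z_0,\rho)\subset\mc V$, pick $\xi<\gamma$ so that $\pi_\xi^\gamma$-fibre diameters are below $\rho/3$ and, by the cofinality hypothesis, $\mathbf T_{\xi,f_\xi}$ is topologically transitive. Upper semi-continuity of the fibre assignment $(\pi_\xi^\gamma)^{-1}$ supplies an open $\mc U\subset X_\xi$ around $\pi_\xi^\gamma z_0$ with $(\pi_\xi^\gamma)^{-1}(\mc U)\subset\mc V$. Transitivity of $f_\xi$ then yields, for an arbitrary target $a\in\R$ and neighbourhood $V'\ni a$, some $n\neq 0$ and $y\in\mc U\cap T_\xi^{-n}\mc U$ with $f_\xi(n,y)\in V'$; the zero $z'$ of $h_\xi(n,\cdot)$ on $(\pi_\xi^\gamma)^{-1}(y)$ then sits in $\mc V$, has $T_\gamma^n z'\in\mc V$, and satisfies $f_\gamma(n,z')=f_\xi(n,y)\in V'$, so $a\in E(f_\gamma)$. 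The main obstacle throughout is the absence of Lemma \ref{lem:iso}; I expect it to be surmounted only because \emph{every} fibre of $\pi_\zeta^\gamma$ (not merely one) has small diameter, which is what lets the triangle-inequality argument run uniformly across the fibre rather than merely at the base point.
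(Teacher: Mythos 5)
Your proof is correct and follows essentially the paper's route: the paper likewise replaces Lemma \ref{lem:iso} at a limit ordinal by the observation that every $\pi_\zeta^\gamma$-fibre has diameter below $\varepsilon/3$ for a suitable $\zeta<\gamma$ (asserted there without your explicit inverse-limit justification), and then runs the same combination of Lemma \ref{lem:at}, the zero of $(f_\gamma-f_\zeta\circ\pi_\zeta^\gamma)(n,\cdot)$ on each connected fibre, Proposition 3.4 of \cite{LM} for the coboundary case, and the $2L\setminus L^{0}$ trapping for the transient case. Your only deviation is in the third assertion, where you verify $E(f_\gamma)=\R$ directly by transferring essential values from a transitive level $\xi$ through the fibrewise zero, while the paper argues by contradiction with the $2L\setminus L^{0}$ constraint; the mechanism is identical and both versions work.
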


\begin{proof}
Suppose that the skew product of $\mathbf T_{\gamma,f_\gamma}$ is not transitive and $f_\xi(n,x_\xi)$ is a coboundary for every $1\leq\xi<\gamma$, and let $\{(n_k,x_k)\}_{k\geq 1}\subset\Z\times X_\gamma$ be a sequence with $d_\gamma (x_k, T_\gamma^{n_k} x_k)\to 0$.
By Lemma \ref{lem:at} there exist a compact symmetric neighbourhood $L\subset\R$ of zero and an $\varepsilon>0$ so that $d_\gamma(x, T_\gamma ^n x)<\varepsilon$ for $x\in X_\gamma$ and $n\in\Z$ implies that $f_\gamma(n,x)\notin 2L\setminus L^0$.
As $\gamma$ is a limit ordinal, we can choose an ordinal $\zeta<\gamma$ so that $d_\gamma (x,y)<\varepsilon/3$ holds true for all $x,y\in X_\gamma$ with $\pi_\zeta^\gamma (x)=\pi_\zeta^\gamma (y)$.
For all positive integers $k$ with $d_\gamma(x_k, T_\gamma^{n_k} x_k)<\varepsilon/3$ and all $y_k\in X_\gamma$ with $\pi_\zeta^\gamma (y_k)=\pi_\zeta^\gamma (x_k)$ it follows that $d_\gamma(y_k, T_\gamma^{n_k} y_k)<\varepsilon$.
Moreover, for all large enough integers $k$ it holds true that $f_\zeta (n_k,\pi_\zeta^\gamma(x_k))\in L$, because $f_\zeta$ is a coboundary.
The cocycle $(f_\gamma-f_\zeta\circ\pi_\zeta^\gamma)(n_k,x)$ has a zero $y_k\in(\pi_\zeta^\gamma)^{-1}(\pi_\zeta^\gamma(x_k))$ and thus $f_\gamma(n_k,y_k)\in L$ for every $k\geq k_0$ and a suitable integer $k_0\geq 1$.
We can now conclude from the connectedness of the fibre and $f_\gamma(n_k,x_k)\notin 2L\setminus L^0$ that $f_\gamma (n_k,x_k)\in L$ holds true for all $k\geq k_0$.
The neighbourhood $L$ can be chosen arbitrarily small, and therefore $f_\gamma(n_k,x_k)\to 0$ as $k\to\infty$.
As the sequence $\{(n_k,x_k)\}_{k\geq 1}\subset\Z\times X_\gamma$ was arbitrary, the Proposition 3.4 in \cite{LM} asserts that $f_\gamma(n,x)$ is a coboundary.

Now suppose that there exists an ordinal $1\leq\zeta<\gamma$ so that $f_\xi(n,x_\xi)$ is transient for all $\zeta\leq\xi<\gamma$, while $f_\gamma(n,x)$ is recurrent but its skew product extension is not topologically transitive.
Let $\{(n_k,x_k)\}_{k\geq 1}\subset\Z\times X_\gamma$ be a sequence with $d_\gamma (x_k, T_\gamma^{n_k} x_k)\to 0$ and $f_\gamma(n_k,x_k)\to 0$, and choose as above compact symmetric neighbourhood $L\subset\R$ of zero, an $\varepsilon>0$, and an ordinal $\zeta<\gamma$.
For all large enough integers $k$ it holds true that $d_\gamma(x_k, T_\gamma^{n_k} x_k)<\varepsilon/3$, and thus $d_\gamma(y_k, T_\gamma^{n_k} y_k)<\varepsilon$ for all $y_k$ with $\pi_\zeta^\gamma (y_k)=\pi_\zeta^\gamma (x_k)$.
Furthermore, for all large enough integers $k$ it follows that $f_\zeta (n_k,\pi_\zeta^\gamma(x_k))\notin 2L$, because $f_\zeta(n,x)$ is transient.
The cocycle $(f_\gamma-f_\zeta\circ\pi_\zeta^\gamma)(n_k,x)$ has a zero $y_k$ on the fibre $(\pi_\zeta^\gamma)^{-1}(\pi_\zeta^\gamma(x_k))$ for every integer $k\geq 1$, and thus $f_\gamma(n_k,y_k)\notin 2L$ for all large enough integers $k$.
Now the connectedness of the fibre $(\pi_\zeta^\gamma)^{-1}(\pi_\zeta^\gamma(x_k))$ and $f_\gamma(n_k,x_k)\notin 2L\setminus L^0$ imply for all large enough integers $k$ that $f_\gamma(n_k,x_k)\notin 2L$, in contradiction to $f_\gamma(n_k,x_k)\to 0$.

For the proof of the last assertion suppose that the skew product of $f_\gamma(n,x_\gamma)$ is not topologically transitive, and then choose as above a compact symmetric neighbourhood $L\subset\R$ of zero, an $\varepsilon>0$, and an ordinal $\zeta<\gamma$.
Now use the transitivity of the skew product of $f_\xi(n,x_\xi)$ for a suitable ordinal $\xi$, $\zeta\leq\xi<\gamma$ and the fact that $(f_\gamma-f_\xi\circ\pi_\xi^\gamma)(n,x)$ has a zero on every $\pi_\xi^\gamma$-fibre.
Then a contradiction occurs to $f_\gamma(n,x)\in 2L\setminus L^0$ for all $(n,x)\in\Z\times X_\gamma$ with $d_\gamma(x, T_\gamma^n x)<\varepsilon$.
\end{proof}

\begin{proposition}\label{prop:max}
If the real-valued cocycle $f(n,x)$ is topologically recurrent, then either there exists a maximal ordinal $1\leq\alpha\leq\eta$ so that the skew product extension $\mathbf T_{\alpha,f_\alpha}$ is topologically transitive or $f(n,x)$ is a coboundary.
\end{proposition}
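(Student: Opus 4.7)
The plan is to run a transfinite induction along the normal quasi-isometric system $\{(X_\xi,T_\xi):0\leq\xi\leq\eta\}$, using Lemmas \ref{lem:c_t} and \ref{lem:lim} as the sole inductive engine, and splitting on whether the set
\[
B=\{\xi:1\leq\xi\leq\eta,\ \mathbf T_{\xi,f_\xi}\text{ is topologically transitive}\}
\]
is empty or not.

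If $B\neq\emptyset$, I would show that $B$ admits a maximum. Setting $\alpha=\sup B\leq\eta$, there are two possibilities: either $\alpha\in B$ and we are done, or $\alpha$ is a limit ordinal with $B\cap[1,\alpha)$ cofinal below $\alpha$. In the second case the third assertion of Lemma \ref{lem:lim} forces $\mathbf T_{\alpha,f_\alpha}$ to be topologically transitive, hence $\alpha\in B$ after all. Either way $\alpha=\max B$ is the maximal ordinal required by the proposition.

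If $B=\emptyset$, I would show by transfinite induction that $f_\xi(n,x_\xi)$ is a topological coboundary for every $1\leq\xi\leq\eta$; the case $\xi=\eta$ then gives the conclusion. The base case $\xi=1$ is where the real work lies: $(X_1,T_1)$ is a minimal rotation on a compact metric group, so the classical Kronecker dichotomy of \cite{G-H} and \cite{LM} yields the trichotomy that $f_1$ is transient, a coboundary, or has a topologically transitive skew product. Transitivity is excluded by $B=\emptyset$. If $f_1$ were transient, a secondary upward transfinite induction using the transient halves of Lemmas \ref{lem:c_t} and \ref{lem:lim} (with $B=\emptyset$ again ruling out the transitive alternative at every level) would propagate transience all the way to $\xi=\eta$; then $f=f_\eta$ would be transient, contradicting the standing recurrence hypothesis. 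So $f_1$ must be a coboundary. For the inductive step at an ordinal $\gamma>1$ under the hypothesis that $f_\xi$ is a coboundary for every $\xi<\gamma$, the coboundary halves of Lemma \ref{lem:c_t} (if $\gamma$ is a successor) and of Lemma \ref{lem:lim} (if $\gamma$ is a limit) deliver the alternative that $f_\gamma$ is a coboundary or has a topologically transitive skew product, and the transitive alternative is once more excluded.

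The main obstacle I anticipate is precisely this base case $\xi=1$, because recurrence of $f$ does not transparently descend to the factor cocycle $f_1$; the trichotomy for $f_1$ has to be obtained from the classical Kronecker result rather than read off from projection of the recurrence hypothesis. Converting a hypothetical transience of $f_1$ into a contradiction then requires the same upward-propagation machinery used in the main induction, only applied in the transient column of Lemmas \ref{lem:c_t} and \ref{lem:lim}.
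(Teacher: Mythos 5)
Your proposal is correct and takes essentially the same route as the paper: the dichotomy/trichotomy for $f_1$ over the compact group rotation from \cite{LM}, transfinite induction through the coboundary and transient alternatives of Lemmas \ref{lem:c_t} and \ref{lem:lim}, and the third assertion of Lemma \ref{lem:lim} to exclude a limit ordinal when extracting the \emph{maximal} transitive level. The only difference is bookkeeping: you split on whether the set of transitive levels is empty (handling a possibly transient $f_1$ by propagating transience upward to contradict recurrence of $f$), whereas the paper splits on whether some $f_\xi$ is transient and then locates the first recurrent level above it; the ingredients and their use coincide.
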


\begin{proof}
We suppose at first that the cocycle $f_\xi(n,x_\xi)$ is recurrent for every ordinal $1\leq\xi<\eta$.
The cocycle $f_1(n,x_1)$ is defined over a minimal rotation on a compact metric group, and by Theorem 1 in \cite{LM} either $\mathbf T_{1,f_1}$ is transitive or $f_1$ is a coboundary.
In any case, the Lemmas \ref{lem:c_t}, \ref{lem:lim}, and transfinite induction imply that either there exists an ordinal $1\leq\zeta\leq\eta$ so that $\mathbf T_{\zeta,f_\zeta}$ is topologically transitive or $f(n,x)$ is a coboundary.

If $f_\xi(n,x_\xi)$ is transient for an ordinal $1\leq\xi<\eta$, then let $\beta$ be the minimal element of the set of ordinals $\xi<\zeta\leq\eta$ so that $f_\zeta(n,x_\zeta)$ is topologically recurrent.
This set is of course non-empty, because $f_\eta(n,x_\eta)$ is topologically recurrent, and it follows from the Lemmas \ref{lem:c_t} and \ref{lem:lim} that $\mathbf T_{\beta,f_\beta}$ is even topologically transitive.

Now we consider the set of ordinals $1\leq\zeta\leq\eta$ so that for all $\zeta\leq\xi\leq\eta$ the skew product $\mathbf T_{\xi,f_\xi}$ is \emph{not} topologically transitive.
If this set is empty, then $\mathbf T_{\eta,f_\eta}$ is topologically transitive and $\alpha=\eta$.
Otherwise, there exists a minimal element, which cannot be a limit ordinal by Lemma \ref{lem:lim}, and thus we have a maximal ordinal $1\leq\alpha\leq\eta$ so that $\mathbf T_{\alpha,f_\alpha}$ is topologically transitive.
\end{proof}

It should be mentioned that in the case of an uniquely ergodic homeomorphism the proofs above could be simplified.
Then the recurrence of the cocycle $f(n,x)$ implies that $\mu_0(f)=0$, because $\mu_0$ is the unique invariant probability measure of full support and $f(n,x)/n$ converges uniformly to $\mu_0(f)$ as $n\to\infty$.
The homeomorphisms $T_\xi$ for the ordinals $0\leq\xi\leq\eta$ are also uniquely ergodic, and from $\mu_\xi(f_\xi)=0$ follows for every ordinal $1\leq\xi\leq\eta$ the measure theoretic (and therefore topological) recurrence of the cocycle $f_\xi(n,x_\xi)$.
However, for a non-uniquely ergodic homeomorphism $T$ there might be ordinals $0\leq\xi<\zeta\leq\eta$ so that $f_\xi(n,x_\xi)$ is transient, while the increment to $f_\zeta(n,x_\zeta)$ forces the topological transitivity of the skew product $\mathbf T_{\zeta,f_\zeta}$.

After the flow $(X_\alpha,T_\alpha)$ with a topologically transitive skew product has been identified, the extension from $(X_\alpha,T_\alpha)$ to $(X,T)$ will become the object of study.
There might be infinitely many isometric extensions in between, and therefore this extension is in general only a distal extension.
For distal extensions there is a result similar to Fact \ref{fact:iso}, however with a Hausdorff topological group acting on a compact Hausdorff space and both of them in general not being metrisable.
This is a result of Ellis (cf. 12.12, 12.13, and 14.26 of \cite{El}), while a direct and simple proof is given in Proposition 1.1 of \cite{MMWu}.

\begin{fact}\label{fact:ex}
Let $(Z,R)=\tau(X,T)$ be a factor of a distal minimal compact Hausdorff flow $(X,T)$.
Then there exists a distal minimal compact Hausdorff flow $(X',T')$ with $(X,T)=\pi(X',T')$ as a factor and a Hausdorff topological group $G$ acting transitively (in the strict sense) and freely on the fibres of the factor map $\tau\circ\pi:(X',T')\longrightarrow (Z,R)$ by automorphisms of $(X',T')$.
Moreover, there exists a subgroup $L$ of $G$ so that the factor map $\pi:(X',T')\longrightarrow (X,T)$ is the mapping of a point $x\in X'$ onto its $L$-orbit.
\end{fact}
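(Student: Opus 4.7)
This is a classical structural theorem of Ellis, and my plan is to recover it by appealing to the enveloping semigroup formalism of distal flows. The goal is to construct $(X',T')$ as a ``principal bundle cover'' of $(Z,R)$ which simultaneously dominates $(X,T)$, and then to identify $G$ as the natural structure group of this bundle.

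First, since $(X,T)$ is distal and minimal, its Ellis (enveloping) semigroup $E = E(X,T)$ is a group of continuous self-maps of $X$, acting transitively by minimality. The idea is to produce a distal minimal compact Hausdorff flow $(X',T')$ on which a Hausdorff topological group $G \subseteq \textup{Aut}(X',T')$ acts freely, with $(X,T)$ as a quotient. Concretely, one takes a sufficiently large universal minimal distal flow $(M, T_M)$ whose automorphism group $G_M$ acts freely and transitively on $M$, realises $X = M/A$ and $Z = M/B$ for closed subgroups $A \subseteq B$ of $G_M$, and defines
\begin{equation*}
X' = M/A', \qquad A' = \bigcap_{b\in B} bAb^{-1},
\end{equation*}
where $A'$ is the normal core of $A$ in $B$, closed because $A$ is closed and conjugation is a homeomorphism. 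Setting $G = B/A'$ and $L = A/A'$, the group $G$ acts on $X'$ by right translation of cosets; normality of $A'$ in $B$ ensures the action is well defined, continuous, and commutes with $T'$.

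Next, I would verify the required properties. The fibres of $\tau \circ \pi: X' \to Z = M/B$ are precisely the $G$-orbits, so $G$ acts transitively on each fibre; freeness is clear from the coset description. The quotient of $X'$ by $L$ recovers $X = M/A$, giving the factor map $\pi$. Minimality and compactness are inherited from $M$, and distality is preserved under quotients since a proximal pair in $X'$ would lift (by compactness) to a proximal pair in the distal flow $M$, a contradiction.

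The principal obstacle lies in the existence of a suitable universal cover $(M, T_M)$ in the non-metrisable Hausdorff category, where equicontinuity is unavailable and a transfinite tower of isometric extensions cannot simply be handled by the group construction of Fact \ref{fact:iso}. This is exactly where the deeper Ellis machinery — idempotents in the enveloping semigroup, and the fact that for distal flows every element of $E$ is a homeomorphism — is indispensable. Rather than reproducing that machinery, I would follow the direct short argument of Proposition 1.1 in \cite{MMWu}, which builds $X'$ and $G$ from a minimal closed invariant subset of a product flow and avoids the full universal construction.
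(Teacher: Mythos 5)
There is a genuine gap in the route you sketch, and it is worth being precise about it because the paper itself offers no proof of this Fact at all: it simply cites Ellis (12.12, 12.13, 14.26 of \cite{El}) and Proposition 1.1 of \cite{MMWu}, so your closing sentence (``follow the direct short argument of Proposition 1.1 in \cite{MMWu}'') is in effect the same move the author makes. The problem is everything before that sentence. Your construction presupposes a ``sufficiently large universal minimal distal flow $(M,T_M)$ whose automorphism group $G_M$ acts freely and transitively on $M$'' as a Hausdorff topological group acting jointly continuously, with $X=M/A$, $Z=M/B$ for closed subgroups $A\subseteq B$. But that universal statement is exactly the content of the Fact in its absolute form (take $Z$ to be the trivial flow): you are assuming a stronger version of what is to be proved and then deriving the relative version by the normal-core/coset manipulation. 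Unless you independently establish the existence of such an $(M,G_M)$ — which is precisely the hard Ellis-theoretic content — the argument is circular. Moreover, your stated justification for it is factually wrong: for a distal (non-equicontinuous) minimal flow the enveloping semigroup $E(X,T)$ is indeed a group (Ellis), but its elements are in general \emph{not} continuous self-maps of $X$, and $E(X,T)$ with its natural topology is not a topological group; the paper itself says so explicitly in the proof of Proposition \ref{prop:flow} (``Even if the Ellis group $\mc E(X,T)$ does not act continuously on $X$, it acts distally\dots''). So the claimed ``structure group'' cannot be read off from $E(X,T)$ in the naive way, and this is exactly the difficulty your own last paragraph concedes.

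The construction of \cite{MMWu}, Proposition 1.1 — which the author also reuses later when proving Proposition \ref{prop:flow} — is genuinely different from your sketch: $X'$ is obtained as a minimal closed invariant subset of a (possibly uncountable) product of copies of $X$, and $G$ is a quotient of the subgroup of the Ellis group preserving a chosen fibre (modulo the subgroup fixing that fibre pointwise), equipped with the compact-open topology of its action on $X'$, where it does become a Hausdorff topological group acting freely and transitively on the fibres over $Z$ by automorphisms. The coset bookkeeping in your sketch (normal core $A'$, $G=B/A'$, $L=A/A'$, inheritance of minimality and distality under factors) is fine as algebra, but it only becomes a proof once the universal free-transitive cover exists, and that existence is not cheaper than the Fact itself. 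As written, your proposal either reduces to the citation (acceptable, and what the paper does) or rests on an unproved and partly incorrect premise.
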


In the paper \cite{Gl2} it is proven that the metrisability of a compact Hausdorff space $(X',T')$ with these properties implies even that the extension from $(Z,R)$ to $(X,T)$ is an isometric extension.

\begin{proposition}\label{prop:flow}
There exists a factor $(Y,S)=(X_\alpha\times M,T_{\alpha,\Phi,f_\alpha})=\pi_Y(X,T)$, which is a Rokhlin extension of $(X_\alpha,T_\alpha)=\tau_\alpha(Y,S)$ by a distal minimal $\R$-flow $(M,\{\Phi_t:t\in\R\})$ on a compact metric space $(M,\delta)$ and the function $f_\alpha:X_\alpha\longrightarrow\R$.
The $\R$-flow $\{\Psi_t:t\in\R\}\subset\textup{Aut}(Y,S)$ defined by $\Psi_t(x,m)=(x,\Phi_t(m))$ for every $(x,m)\in Y$ fulfils that
\begin{equation}\label{eq:o_flow}
\bar{\mc O}_{S, f_\alpha\circ\tau_\alpha}(y,0)\cap(\tau_\alpha^{-1}(\tau_\alpha(y))\times\{t\})\subset\{(\Psi_t(y),t)\}
\end{equation}
for every $y\in Y$ and every $t\in\R$.
If $(\tau_\alpha(y),0)$ is a transitive point for $\mathbf T_{\alpha,f_\alpha}$, then these two sets coincide for every $t\in\R$.
Moreover, for every $x$ in a residual subset of $X$ it holds true that
\begin{equation}\label{eq:o_X}
\pi_Y^{-1}(x)\times\{0\}\subset\bar{\mc O}_{T, f_\alpha\circ\pi_\alpha}(x,0) .
\end{equation}
\end{proposition}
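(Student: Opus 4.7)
The plan is to exploit topological transitivity of the base skew product $\mathbf{T}_{\alpha,f_\alpha}$ in order to build $(Y,S)$ from a single orbit closure in $X\times\R$ sitting above $X_\alpha\times\R$. Fix $x_\alpha^*\in X_\alpha$ for which $(x_\alpha^*,0)$ is a transitive point for $\mathbf{T}_{\alpha,f_\alpha}$, pick any preimage $x^*\in\pi_\alpha^{-1}(x_\alpha^*)$, and set $\Lambda=\bar{\mc O}_{T,f_\alpha\circ\pi_\alpha}(x^*,0)\subset X\times\R$. Since $X$ is compact, the continuous image $(\pi_\alpha\times\mathrm{id})(\Lambda)$ is closed in $X_\alpha\times\R$; being $\mathbf{T}_{\alpha,f_\alpha}$-invariant and containing the transitive point $(x_\alpha^*,0)$, it must equal all of $X_\alpha\times\R$. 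Consequently, for every $(x_\alpha,t)\in X_\alpha\times\R$ the slice $\Lambda_{(x_\alpha,t)}=\Lambda\cap(\pi_\alpha^{-1}(x_\alpha)\times\{t\})$ is a non-empty compact subset of the fiber $\pi_\alpha^{-1}(x_\alpha)$.

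Next, I would construct $M$ as a suitable quotient making each slice $\Lambda_{(x_\alpha^*,t)}$ collapse to a single point, and define $\{\Phi_t\}$ so that $\Phi_t$ sends the class of $\Lambda_{(x_\alpha^*,0)}$ to the class of $\Lambda_{(x_\alpha^*,t)}$. To perform this identification continuously and equivariantly I would invoke Fact \ref{fact:ex}, lifting $(X,T)$ to a distal minimal compact Hausdorff flow $(X',T')$ on which a Hausdorff topological group $G\subset\mathrm{Aut}(X',T')$ acts freely and transitively on $\pi_\alpha\circ\pi$-fibers, with $X=X'/L$. The lifted orbit closure $\Lambda'\subset X'\times\R$ traces out a continuous one-parameter family of elements of $G$ moving the reference fiber into itself at varying heights $t$; passing to the quotient by $L$ and restricting to $\Lambda_{(x_\alpha^*,0)}$ yields the desired compact metric $M$ together with the continuous $\R$-flow $\{\Phi_t\}$. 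Minimality follows from topological transitivity of $\mathbf{T}_{\alpha,f_\alpha}$; distality is inherited from distality of $(X,T)$; and metrizability of $M$ follows because $M$ is a quotient of the compact metric fiber $\pi_\alpha^{-1}(x_\alpha^*)$ by a closed equivalence relation with compact classes.

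Given $M$ and $\{\Phi_t\}$ I would define $\pi_Y:X\to X_\alpha\times M$ by $\pi_Y(x)=(\pi_\alpha(x),m)$, where $m\in M$ records the position of $x$ in its $\pi_\alpha$-fiber relative to the reference fiber via the $G$-action on the lift. The cocycle identity for $f_\alpha$ together with the one-parameter nature of $\{\Phi_t\}$ yields the equivariance $\pi_Y\circ T=T_{\alpha,\Phi,f_\alpha}\circ\pi_Y$, realizing $(Y,S)=(X_\alpha\times M,T_{\alpha,\Phi,f_\alpha})$ as the desired factor of $(X,T)$. By construction the $\R$-flow $\Psi_t(x_\alpha,m)=(x_\alpha,\Phi_t(m))$ on $Y$ picks out exactly the unique element of the projected slice $\pi_Y(\Lambda_{(x_\alpha,t)})$, which gives the inclusion \eqref{eq:o_flow}; equality in the transitive case is immediate because $\Lambda$ was taken as the full orbit closure above the transitive base point. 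Finally, \eqref{eq:o_X} follows from Lemma \ref{lem:o_p} applied to the skew product $\mathbf{T}_{f_\alpha\circ\pi_\alpha}$ on $X\times\R_\infty$: on the residual $T$-invariant set where the orbit closure coincides with the prolongation, the distal extension $\pi_Y:X\to Y$ combined with \eqref{eq:o_flow} forces the prolongation at height $0$ to fill the whole $\pi_Y$-fiber of $x$.

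The principal obstacle I expect is the continuous and metric construction of the flow $\{\Phi_t\}$. The group $G$ of Fact \ref{fact:ex} is only Hausdorff and, when $\alpha<\eta$ traverses infinitely many isometric extensions in the normal quasi-isometric system, it need not be metrizable; one must use the equicontinuity of $G$ on $\pi_\alpha$-fibers coming from distality to show that $t\mapsto\Phi_t$ is genuinely continuous into the homeomorphism group of the compact metric space $(M,\delta)$, and that the resulting quotient $M$ remains metric with a minimal distal $\R$-action.
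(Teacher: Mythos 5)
Your overall strategy is the same as the paper's (lift via Fact \ref{fact:ex} and read the flow $\{\Phi_t\}$ off the way the orbit closure of the $f_\alpha\circ\pi_\alpha$-skew product sits over $X_\alpha\times\R$), but two of the claims you treat as automatic are precisely where the real work lies, and as written they are gaps. First, the ``continuous one-parameter family of elements of $G$'' does not exist: for each $t$ the set of $g\in G$ carrying the reference configuration to height $t$ is an entire coset $G_t$ of the subgroup $G_0$ of height-zero elements, and to collapse the slices consistently you must show that $G_0$ is a closed subgroup, that $G_tG_{t'}\subset G_{t+t'}$ and $G_{-t}=G_t^{-1}$, that $G_0$ is \emph{normal} in $\tilde G=\cup_t G_t$ and contains $L$ (so that $Y=X'/G_0$ is a genuine factor of $X$, not just of $X'$), and that $t\mapsto G_t$ is continuous into $\tilde G/G_0$. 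The paper can do this because it defines $G_t$ via the prolongation $\mc D_{T,f_\alpha\circ\pi_\alpha}$, which (using openness of $\pi$, minimality, and the fact that $f_\alpha\circ\pi_\alpha$ is fibre-constant) is independent of the base point; your definition via the orbit closure of one chosen transitive point gives you no comparable base-point independence, and without it neither the equivalence relation ``lie in a common slice'' nor the uniqueness needed for the inclusion \eqref{eq:o_flow} at \emph{every} $y\in Y$ (not just over the transitive fibre) is established. The ``at most one point'' statement in \eqref{eq:o_flow} is exactly the assertion that two points of the same $\tau_\alpha$-fibre at the same height force $\pi_Y(x_1)=\pi_Y(x_2)$, which is the content of the definition of $G_0$, not a consequence of your construction.

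Second, you assume rather than prove the product structure $Y\cong X_\alpha\times M$ with $S$ acting as the Rokhlin extension $T_{\alpha,\Phi,f_\alpha}$. A priori the $G_0$-quotient is only an extension of $(X_\alpha,T_\alpha)$ carrying a fibre-preserving $\R$-flow; the paper's proof of the splitting occupies most of its length: it shows the joint $\Z\times\R$-action on $Y$ is distal (via the Ellis group realization of $X'$), introduces the homeomorphism $R(y)=\Psi_{-(f_\alpha\circ\tau_\alpha)(y)}(Sy)$, proves that $R$ is a distal extension of $(X_\alpha,T_\alpha)$ whose orbit closures meet every $\tau_\alpha$-fibre in exactly one point, and then uses Lemma \ref{lem:o_p} on a residual set (coincidence of orbit closure and prolongation both for $R$ and for $\{\Psi_t\}$) to show the $\{\Psi_t\}$-action is \emph{minimal} on a fibre $M=\tau_\alpha^{-1}(\tilde x)$ and that $(x,m)\mapsto\tau_\alpha^{-1}(x)\cap\bar{\mc O}_R(m)$ is a homeomorphism conjugating $S$ to the Rokhlin extension. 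Your remark that ``minimality follows from topological transitivity of $\mathbf T_{\alpha,f_\alpha}$'' and that the cocycle identity ``yields the equivariance'' skips all of this; in particular minimality of $(M,\{\Phi_t\})$ genuinely needs the prolongation-equals-orbit-closure argument on a residual set, and metrizability of $M$ comes for free in the paper because $M$ is a $\tau_\alpha$-fibre of the metric factor $Y$ of $X$, not a quotient of a fibre of $X$ whose quotient topology you would still have to control. You correctly flag the continuity/metrizability issue in your last paragraph, but flagging it is not resolving it.
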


\begin{proof}
We shall construct a factor $(Y,S)$ of $(X,T)$ and a flow $\{\Psi_t:t\in\R\}\subset\textup{Aut}(Y,S)$, and thereafter it will be shown that $(Y,S)$ can be represented as a Rokhlin extension of $(X_\alpha,T_\alpha)$.
Let $(X',T')$ be a minimal compact Hausdorff extension of $(X_\alpha,T_\alpha)$ with $(X,T)=\pi(X',T')$ as a factor and a Hausdorff group $G$ acting freely by automorphisms of $(X',T')$ on the fibres of $\pi_\alpha\circ\pi$, so that $(X,T)$ is the $L$-orbit space of $(X',T')$ under a closed subgroup $L\subset G$ (cf. Fact \ref{fact:ex}).
For an arbitrary point $z'\in X'$ and $t\in\R$ we define a closed subset of $G$ by
\begin{equation}\label{eq:G}
G_{z',t}=\{g\in G: (\pi(g(z')),t)\in\mc D_{T,f_\alpha\circ\pi_\alpha}(\pi(z'),0)\} .
\end{equation}
The mapping $\pi$ is open as a factor mapping of distal flows, and therefore for every $g\in G_{z',t}$ there are nets $\{z_k'\}_{k\in I}\subset X'$ and $\{n_k\}_{k\in I}\subset\Z$ so that $z_k'\to z'$, $T^{n_k}\pi(z_k')\to\pi(g(z'))$, and $f_\alpha(n_k,\pi_\alpha\circ\pi(z_k'))\to t$.
We can conclude for every fixed integer $m$  that
\begin{equation*}
T^{n_k}\pi(T'^m z_k')=T^{n_k+m}\pi(z_k')\to T^m\pi(g(z'))=\pi(T'^m g(z'))=\pi(g(T'^m z'))
\end{equation*}
and
\begin{eqnarray*}
f_\alpha(n_k,\pi_\alpha\circ\pi(T'^m z_k')) & = & f_\alpha(n_k,\pi_\alpha\circ\pi(z_k'))-f_\alpha(m,\pi_\alpha\circ\pi(z_k'))\\
& & +f_\alpha(m,\pi_\alpha\circ T^{n_k}\circ\pi(z_k'))\to t ,
\end{eqnarray*}
because the function $f_\alpha\circ\pi_\alpha$ is constant on the fibres of $\pi_\alpha$.
The density of the $T$-orbit of $z'$ implies for every $x'\in X'$ that
\begin{equation*}
(\pi(g(x')),t)\in\mc D_{T,f_\alpha\circ\pi_\alpha}(\pi(x'),0)
\end{equation*}
and thus $g\in G_{x',t}=G_{z',t}=G_t$.
It follows by symmetry that $G_{-t}=(G_t)^{-1}$.

Now we fix a point $\tilde x\in X$ with $\bar{\mc O}_{T_\alpha,f_\alpha}(\pi_\alpha(\tilde x),0)=X_\alpha\times\R$ and $\mc D_{T,f_\alpha\circ\pi_\alpha}(\tilde x,0)=\bar{\mc O}_{T,f_\alpha\circ\pi_\alpha}(\tilde x,0)$ (cf. Lemma \ref{lem:o_p}).
We observe that $G_t$ is non-empty for every $t\in\R$, because due to $\bar{\mc O}_{T, f_\alpha}(\pi_\alpha(\tilde x),0)=X_\alpha\times\R$ and the compactness of $X$ the set
\begin{equation*}
\bar{\mc O}_{T,f_\alpha\circ\pi_\alpha}(\tilde x,0)\cap\pi_\alpha^{-1}(\pi_\alpha(\tilde x))\times\{t\}
\end{equation*}
is non-empty.
For $t,t'\in\R$ and $g\in G_t$, $g'\in G_{t'}$ select $x', z'\in X'$ so that $\pi(x')=\tilde x$, $x'=g'(z')$.
It follows that
\begin{equation*}
(\tilde x,t')=(\pi(g'(z')),t')\in\mc D_{T,f_\alpha\circ\pi_\alpha}(\pi(z'),0) ,
\end{equation*}
and for $y'=g(x')$ it holds true that $(\pi(y'),t)\in\bar{\mc O}_{T,f_\alpha\circ\pi_\alpha}(\tilde x,0)$.
We can conclude from $(\pi(y'),t+t')\in\mc D_{T,f_\alpha\circ\pi_\alpha}(\pi(z'),0)$ that $gg'\in G_{t+t'}$, and thus $G_t G_{t'}\subset G_{t+t'}$.
Hence the closed set $G_0$ is a closed subgroup of the Hausdorff topological group
\begin{equation*}
\tilde G=\cup_{t\in\R}G_t ,
\end{equation*}
so that the set $G_t$ is a $G_0$-coset in $\tilde G$ for every $t\in\R$.
Moreover, we have that $G_0\subset G_t G_0 (G_t)^{-1}\subset G_0$, and thus $G_0\supset L$ is normal in $\tilde G$.
Therefore the mapping $t\mapsto G_t$ is a group homomorphism from $\R$ into $\tilde G/G_0$.
We fix an arbitrary $z'\in X'$ and observe that the pre-image of the closed set $\mc D_{T,f_\alpha\circ\pi_\alpha}(\pi(z'),0)$ under the mapping $(g,t)\mapsto (\pi(g(z')),t)$ is the closed set
\begin{equation*}
\{(t,g):t\in\R, g\in G_t\}\subset\R\times G ,
\end{equation*}
and hence the group homomorphism $t\mapsto G_t$ is continuous with respect to the quotient topology on $\tilde G/G_0$.

By the definition of $G_0$ the orbit space $(Y,S)$ of $G_0$ on $X'$ consists of closed sets, and it is an extension of $(X_\alpha,T_\alpha)=\tau_\alpha(Y,S)$ and a factor of $(X,T)$, because $L$ is a subgroup of $G_0$.
The mapping from $x'\in X'$ to $G_0(x')$ is a factor mapping of a distal flow and therefore open, and thus the mapping $x\mapsto\pi_Y(x)=G_0(\pi^{-1}(x))$ is continuous with respect to the Hausdorff metric on $(Y,S)$.
We define the $\R$-action $\{\Psi_t:t\in\R\}\subset\textup{Aut}(Y,S)$ by
\begin{equation*}
\Psi_t(y)=G_t ((\pi_Y\circ\pi)^{-1}(y))=G_t (\{x'\in X':G_0(x')=y\})
\end{equation*}
for every $y\in Y$ and $t\in\R$, and its continuity follows from the continuity of $t\mapsto G_t$ as the group $G$ carries the compact open topology of its action on $X'$.

We turn to the inclusion (\ref{eq:o_flow}).
Suppose that $(y_i,t)$ for $i\in\{1,2\}$ are both within the intersection $\bar{\mc O}_{S, f_\alpha\circ\tau_\alpha}(y,0)\cap\tau_\alpha^{-1}(\tau_\alpha(y))\times\{t\}$, and select $x\in\pi_Y^{-1}(y)$.
By the compactness of the space $X$ there exist points $x_i\in\pi_Y^{-1}(y_i)\subset\pi_\alpha^{-1}(\tau_\alpha(y))$ so that $(x_i,t)\in\bar{\mc O}_{T, f_\alpha\circ\pi_\alpha}(x,0)$, and therefore $(x_2,0)\in\mc D_{T,f_\alpha\circ\pi_\alpha}(x_1,0)$.
The definition (\ref{eq:G}) implies that $y_1=\pi_Y(x_1)=\pi_Y(x_2)=y_2=\Psi_t(y)$, and thus for every $y\in Y$ and $t\in\R$ there can be at most one point in the intersection
\begin{equation*}
\bar{\mc O}_{S, f_\alpha\circ\tau_\alpha}(y,0)\cap\tau_\alpha^{-1}(\tau_\alpha(y))\times\{t\} .
\end{equation*}
If there exists such a point, then the inclusion (\ref{eq:o_flow}) holds true, and if the point $(\tau_\alpha(y),0)$ is transitive under $\mathbf T_{\alpha,f_\alpha}$, then by the compactness of $Y$ the this intersection is non-empty for every $t\in\R$.
The inclusion (\ref{eq:o_X}) on a residual set of $x\in X$ follows directly from Lemma \ref{lem:o_p} and the definition of the subgroup $G_0$, which defines the factor map $\pi_Y$.

We want to verify that the distality of $(X,T)$ implies that the flow $((n,t),y)\mapsto\Psi_t(S^n y)$ on $Y$ is distal.
We construct the compact Hausdorff space $X'$ as an uncountable product of copies of $X$ (cf. the proof of \cite{MMWu}, Proposition 1.1).
The group $G$ is then a quotient of the subgroup of the Ellis group $\mc E(X,T)$, which preserves a chosen $\pi_\alpha$-fibre in $X$, divided by its subgroup preserving every element in that fibre.
The group $G$ is acting on each coordinate of the product space $X'$, and it is equipped with the compact-open topology of its action on $X'$.
Even if the Ellis group $\mc E(X,T)$ does not act continuously on $X$, it acts distally in the sense that $\inf_{g\in\mc E(X,T)}d(g(x),g(x'))>0$ for distinct $x, x'\in X$.
This is an immediate consequence of $(X,T)$ being distal and $\mc E(X,T)$ being the closure of $\{T^n:n\in\Z\}$ in $X^X$.
Therefore also the action of $\Z\times G$ on $X'$ is distal, as it is defined as coordinate-wise action of elements of $\mc E(X,T)$, and the flow $(Y,\Z\times\{\Psi_t:t\in\R\})$ is distal as a factor of $(X',\Z\times\tilde G)$.

We define a continuous mapping on $Y$ by
\begin{equation*}
R(y)=\Psi_{-(f_\alpha\circ\tau_\alpha)(y)}(S y) ,
\end{equation*}
and as the flow $\{\Psi_t:t\in\R\}\subset\textup{Aut}(Y,S)$ leaves the function $f_\alpha\circ\tau_\alpha$ invariant, the continuous mapping $y\mapsto S^{-1}(\Psi_{(f_\alpha\circ\tau_\alpha)(S^{-1}y)}(y))$ is the inverse of $R$.
Therefore $R$ is a homeomorphism of $Y$ and an extension of $(X_\alpha,T_\alpha)=\tau_\alpha(Y,R)$ so that $\{\Psi_t:t\in\R\}\subset\textup{Aut}(Y,R)$.
By iteration it follows that $R^n y=\Psi_{-(f_\alpha\circ\tau_\alpha)(n,y)}(S^n y)$, and the distality of the $\Z\times\R$-action on $Y$ implies that the extension of $(X_\alpha,T_\alpha)$ to $(Y,R)$ is a distal extension.
Hence $(Y,R)$ is a distal flow, and $Y$ decomposes into minimal $R$-orbit closures.
Every $R$-orbit closure $C\subset Y$ is thus the orbit closure $\bar{\mc O}_R(y)$ of point $y\in Y$ with $\mathbf T_{\alpha,f_\alpha}(\tau_\alpha(S^n y),0)=X_\alpha\times\R$ for every integer $n$, and for such a point it follows from $R^n y=\Psi_{-(f_\alpha\circ\tau_\alpha)(n,y)}(S^n y)$ and the inclusion (\ref{eq:o_flow}) that $\bar{\mc O}_R(y)\times\{0\}\subset\bar{\mc O}_{S, f_\alpha\circ\tau_\alpha}(y,0)$.
Now the argument used in the proof of the inclusion (\ref{eq:o_flow}) asserts that every $R$-orbit closure intersects every $\tau_\alpha$-fibre in a single point.

By Lemma \ref{lem:o_p} there exists a residual set $\mc G\subset Y$ of points with coincidence of the $\{\Psi_t:t\in\R\}$-orbit and the $\{\Psi_t:t\in\R\}$-prolongation and coincidence of the $R$-orbit and the $R$-prolongation.
The set $\mc F=\tau_\alpha(\mc G)$ is also residual, because $\tau_\alpha$ is an open mapping, and for an arbitrary point $\tilde x\in\mc F$ and fixed point $\tilde y\in\mc G$ with $\tau_\alpha(\tilde y)=\tilde x$ the minimality of $(Y,S)$ and $S \tilde y=\Psi_{(f_\alpha\circ\tau_\alpha)(\tilde y)}(R \tilde y)$ imply that
\begin{equation*}
\tau_\alpha^{-1}(\tilde x)\subset\overline{\{\Psi_t(R^n \tilde y):(n,t)\in\Z\times\R\}} .
\end{equation*}
Now we can conclude from $\bar{\mc O}_R(\tilde y)\cap\tau_\alpha^{-1}(\tilde x)=\{\tilde y\}$ and the invariance of the $\tau_\alpha$-fibres under $\{\Psi_t:t\in\R\}$ that $\tau_\alpha^{-1}(\tilde x)$ is a subset of the $\{\Psi_t:t\in\R\}$-prolongation of $\tilde y$, which coincides with the $\{\Psi_t:t\in\R\}$-orbit closure of $\tilde y$.
Thus the orbit $\{\Psi_t(\tilde y):t\in\R\}$ is dense in $\tau_\alpha^{-1}(\tilde x)$, and therefore the distal $\{\Psi_t:t\in\R\}$-action is minimal on $\tau_\alpha^{-1}(\tilde x)$.
Moreover, we want to verify that $\bar{\mc O}_R(y)=\mc D_R(y)$ holds true for every $y\in\tau_\alpha^{-1}(\tilde x)$.
Otherwise, there exist distinct points $y'\in\bar{\mc O}_R(y)$ and $y''\in\mc D_R(y)$ with $\tau_\alpha(y')=\tau_\alpha(y'')=x'$, and from $\{\Psi_t:t\in\R\}\subset\textup{Aut}(Y,R)$, $\overline{\{\Psi_t(y):t\in\R\}}=\tau_\alpha^{-1}(\tilde x)$, and the distality of $\{\Psi_t:t\in\R\}$ on the fibre $\tau_\alpha^{-1}(x')$, it follows that there exist distinct points $\tilde x',\tilde x''\in\mc D_R(\tilde y)\cap\tau_\alpha^{-1}(x')=\bar{\mc O}_R(\tilde y)\cap\tau_\alpha^{-1}(x')$, giving a contradiction.
Therefore the mapping
\begin{eqnarray*}
\varphi:& & X_\alpha\times\tau_\alpha^{-1}(\tilde x)\longrightarrow Y \\
& & (x,y)\mapsto\tau_\alpha^{-1}(x)\cap\bar{\mc O}_R(y)
\end{eqnarray*}
is well-defined, onto, one-to-one, and by $\bar{\mc O}_R(y)=\mc D_R(y)$ for every $y\in\tau_\alpha^{-1}(\tilde x)$ it is also continuous.
Hence the product $X_\alpha\times\tau_\alpha^{-1}(\tilde x)=X_\alpha\times M$ and the space $Y$ are homeomorphic, and we have the conjugation relation that $\varphi^{-1}\circ R\circ\varphi(x,m)=(T_\alpha x,m)$.
Moreover, from $\{\Psi_t:t\in\R\}\subset\textup{Aut}(Y,R)$ follows for every $t\in\R$ the relation that $\varphi^{-1}\circ\Psi_t\circ\varphi(x,m)=(x,\Phi_t(m))$, in which $\{\Phi_t:t\in\R\}$ is the restriction of $\{\Psi_t:t\in\R\}$ on the compact metric space $M=\tau_\alpha^{-1}(\tilde x)$.
\end{proof}

It should be mentioned here that an ordinal $\xi\leq\eta$ with $(Y,S)=(X_\xi,T_\xi)$ not necessarily exists.
In the next step we want to define a function $f_Y:Y\longrightarrow\R$ so that $f_Y\circ\pi_Y$ is cohomologous to $f$, and thereafter we want to study the dynamical properties of the incremental cocycles $(f-f_\alpha\circ\pi_\alpha) (n,x)$, $(f-f_Y\circ\pi_Y) (n,x)$, and $(f_Y-f_\alpha\circ\tau_\alpha) (n,y)$.

\begin{proposition}\label{prop:res}
\begin{enumerate}
\item For an arbitrary sequence $\{(n_k,x_k)\}_{k\geq 1}\subset\Z\times X$ with
\begin{equation*}
d (x_k, T^{n_k} x_k)\to 0\enspace\textup{and}\enspace (f_\alpha\circ\pi_\alpha)(n_k,x_k)\to 0
\end{equation*}
it holds also true that
\begin{equation*}
(f-f_\alpha\circ\pi_\alpha) (n_k,x_k)\to 0
\end{equation*}
as $k\to\infty$.
This assertion implies that
\begin{equation*}
E(f_\alpha\circ\pi_\alpha,f)\cap(\{0\}\times\R)\subset\{(0,0)\} .
\end{equation*}
Moreover, for a point $(x,t)\in X\times\R$ and a sequence $\{m_k\}_{k\geq 1}\subset\Z$ with $\mathbf T_{f_\alpha\circ\tau_\alpha}(x,t)\to(x',t')$ as $k\to\infty$ the sequence $(f-f_\alpha\circ\pi_\alpha) (m_k,x)$ is convergent to a \emph{finite} limit.
\item There exists a continuous function $f_Y:Y\longrightarrow\R$ so that the function $f_Y\circ\pi_Y:X\longrightarrow\R$ is topologically cohomologous to $f$.
In analogy to (i), we have for an arbitrary sequence $\{(n_k,y_k)\}_{k\geq 1}\subset\Z\times Y$ with
\begin{equation*}
d_Y (y_k, S^{n_k} y_k)\to 0\enspace\textup{and}\enspace (f_\alpha\circ\tau_\alpha)(n_k,y_k)\to 0
\end{equation*}
that it holds also true that
\begin{equation}\label{eq:f_Y}
(f_Y-f_\alpha\circ\tau_\alpha) (n_k,y_k)\to 0
\end{equation}
as $k\to\infty$.
\end{enumerate}
\end{proposition}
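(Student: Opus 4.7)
The plan is to prove (i) by transfinite induction on ordinals $\alpha\leq\xi\leq\eta$ of the relative statement that for every sequence $(n_k,x_k^\xi)\in\Z\times X_\xi$ with $d_\xi(x_k^\xi,T_\xi^{n_k}x_k^\xi)\to 0$ and $(f_\alpha\circ\pi_\alpha^\xi)(n_k,x_k^\xi)\to 0$ one has $(f_\xi-f_\alpha\circ\pi_\alpha^\xi)(n_k,x_k^\xi)\to 0$. The base case $\xi=\alpha$ is trivial, and the case $\xi=\eta$ yields the first claim of (i). For the successor step $\xi\to\xi+1$, projecting to $X_\xi$ and invoking the induction hypothesis gives $f_\xi(n_k,\pi_\xi^{\xi+1}(x_k))\to 0$; the rest mimics the proof of Lemma~\ref{lem:c_t}. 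The maximality of $\alpha$ ensures that $\mathbf T_{\xi+1,f_{\xi+1}}$ is not topologically transitive, so on a group-extension lift $\tilde X_{\xi+1}$ of $X_\xi$ provided by Fact~\ref{fact:iso}, Lemma~\ref{lem:at}(i) applied to $f_{\xi+1}\circ\tau$ yields a compact symmetric $K\subset\R$ and $\varepsilon>0$ forbidding values in $2K\setminus K^0$ along $\tilde d$-recurrent sequences. Lemma~\ref{lem:iso} propagates smallness of recurrence across the $\tilde\pi$-fibre, the RIM of Proposition~\ref{prop:meas} together with the connectedness of $\pi_\xi^{\xi+1}$-fibres furnishes a zero $y_k$ of the incremental cocycle $f_{\xi+1}-f_\xi\circ\pi_\xi^{\xi+1}$ on the fibre of $x_k$, and the connected range of $f_{\xi+1}(n_k,\cdot)$ on that fibre, pinned near zero at $y_k$ and unable to cross $2K\setminus K^0$, is confined to $K$. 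Since $K$ is arbitrary, $f_{\xi+1}(n_k,x_k)\to 0$, closing the step. The limit-ordinal step parallels Lemma~\ref{lem:lim}, approximating $\gamma$ by an ordinal $\zeta<\gamma$ with $d_\gamma(x,y)<\varepsilon/3$ whenever $\pi_\zeta^\gamma(x)=\pi_\zeta^\gamma(y)$.

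The essential-range inclusion follows immediately: any $(0,c)\in E(f_\alpha\circ\pi_\alpha,f)$ is witnessed by a sequence to which the above convergence forces $c=0$. For the finite-limit assertion, given $T^{m_k}x\to x'$ and $f_\alpha(m_k,\pi_\alpha(x))\to t'-t$, I show that $\{f(m_k,x)\}$ is Cauchy in $\R$: for indices $k<l$ tending to infinity, set $y_k=T^{m_k}x$ and $n_{kl}=m_l-m_k$, so that $d(y_k,T^{n_{kl}}y_k)=d(T^{m_k}x,T^{m_l}x)\to 0$ and the cocycle identity gives $f_\alpha(n_{kl},\pi_\alpha(y_k))=f_\alpha(m_l,\pi_\alpha(x))-f_\alpha(m_k,\pi_\alpha(x))\to 0$. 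Applying (i) to this doubly indexed sequence yields $f(m_l,x)-f(m_k,x)=f(n_{kl},y_k)\to 0$, so $\{f(m_k,x)\}$ converges to a finite limit in $\R$ and hence so does $(f-f_\alpha\circ\pi_\alpha)(m_k,x)$.

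For part (ii), I fix a generic $x_0\in X$ with $\bar{\mc O}_{T,f_\alpha\circ\pi_\alpha}(x_0,0)=\mc D_{T,f_\alpha\circ\pi_\alpha}(x_0,0)$ (Lemma~\ref{lem:o_p}) and $\pi_Y^{-1}(\pi_Y(x_0))\times\{0\}\subset\bar{\mc O}_{T,f_\alpha\circ\pi_\alpha}(x_0,0)$ (Proposition~\ref{prop:flow}). The finite-limit clause of (i) defines a continuous function $\phi:\bar{\mc O}_{T,f_\alpha\circ\pi_\alpha}(x_0,0)\longrightarrow\R$ by $\phi(x,s)=\lim_k f(m_k,x_0)$ whenever $\mathbf T_{f_\alpha\circ\pi_\alpha}^{m_k}(x_0,0)\to(x,s)$, with $\phi(Tx,s+f_\alpha(\pi_\alpha(x)))=\phi(x,s)+f(x)$ on the dense orbit and therefore everywhere. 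Using the description of the orbit closure as $\{(x,s):\pi_Y(x)=\Psi_s(\pi_Y(x_0))\}$ together with the Rokhlin-extension structure $Y=X_\alpha\times M$, I build a continuous transfer function $b:X\longrightarrow\R$ by setting $b=\phi(\cdot,0)$ on $\pi_Y^{-1}(\pi_Y(x_0))$ and extending equivariantly across $\{\Psi_t\}$-related fibres; then $f_Y(y):=f(x)-b(Tx)+b(x)$ for any $x\in\pi_Y^{-1}(y)$ descends to a continuous function on $Y$ with $f_Y\circ\pi_Y$ cohomologous to $f$. The convergence (\ref{eq:f_Y}) is obtained by rerunning the transfinite induction of (i) in the normal quasi-isometric system of the distal minimal flow $(Y,S)$ constructed in Proposition~\ref{prop:flow}, applied to $f_Y$, which inherits non-transitivity of its skew product from $\mathbf T_f$ through cohomology and retains $(X_\alpha,T_\alpha)$ as the transitive base factor via $\tau_\alpha$. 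The main technical obstacle is the successor step of the transfinite induction, where one must carefully navigate between the isometric extension $(X_{\xi+1},T_{\xi+1})\to(X_\xi,T_\xi)$ and its group-extension lift $\tilde X_{\xi+1}$, exactly as in the proof of Lemma~\ref{lem:c_t}; a secondary delicate point in (ii) is the $\Psi_t$-equivariant extension of $b$ from one $\pi_Y$-fibre to all of $X$.
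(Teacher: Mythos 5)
The weak point is the successor step of your transfinite induction in (i), which is exactly where the paper has to work hardest. You say that Lemma~\ref{lem:at}(i) and Lemma~\ref{lem:iso} apply on the group-extension lift $\tilde X_{\xi+1}$ of the isometric extension $(X_{\xi+1},T_{\xi+1})\to(X_\xi,T_\xi)$, and that ``Lemma~\ref{lem:iso} propagates smallness of recurrence across the fibre.'' But your given sequence is only recurrent in $X$ (hence in $X_{\xi+1}$), and $\tilde X_{\xi+1}$ is \emph{not} a factor of $X$: it only covers $X_{\xi+1}=\tilde X_{\xi+1}/H$. A lift $\tilde x_k$ of $x_k$ satisfies only that $\tilde T^{n_k}\tilde x_k$ returns close to the $H$-orbit of $\tilde x_k$, possibly far from $\tilde x_k$ itself, so there is no $\tilde d$-recurrent sequence to which Lemma~\ref{lem:at}(i) or Lemma~\ref{lem:iso} can be applied, and the hypothesis that Lemma~\ref{lem:iso} ``propagates'' is never available. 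This is precisely why the paper does not argue on the given sequence directly: assuming $(f_{\xi+1}\circ\pi_{\xi+1})(n_k,x_k)\nrightarrow 0$, it forms the $\R^2$-valued cocycle $(f_\xi\circ\pi_\xi^{\xi+1},\,f_{\xi+1}-f_\xi\circ\pi_\xi^{\xi+1})$ and invokes Lemma~\ref{lem:R2}, which (via essential values, Lemma~\ref{lem:o_p}, and return-time differences along a single well-chosen orbit lifted to $\tilde X_{\xi+1}$) \emph{manufactures} a genuinely $\tilde d$-recurrent sequence in the group extension along which the pair cocycle tends to $(0,\infty)$; the contradiction with Lemma~\ref{lem:at}, Lemma~\ref{lem:iso} and the zero-on-a-connected-fibre argument is then derived for that new sequence. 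Your proposal omits this device entirely (your limit-ordinal step is fine, since there no lift is needed), so the step as written fails.

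Part (ii) also diverges from what can actually be carried out. Defining $\phi$ on $\bar{\mc O}_{T,f_\alpha\circ\pi_\alpha}(x_0,0)$ is legitimate, but setting $b=\phi(\cdot,0)$ on one $\pi_Y$-fibre and ``extending equivariantly across $\{\Psi_t\}$-related fibres'' only defines $b$ on a dense union of fibres; its continuous extension to $X$, and the constancy of $f-b\circ T+b$ on $\pi_Y$-fibres (needed for $f_Y$ to descend), are exactly the nontrivial claims and are not argued. The paper's route is structurally different: it first passes to the flow $(Y_c,S_c)$ of \emph{connected components} of the $\pi_Y$-fibres, uses assertion (i) to get a uniform modulus of continuity for the graph functions $\phi_n$, uses connectedness to get a uniform diameter bound, concludes that $(f-f_c\circ\pi_c)(n,\tilde x)$ is bounded on one orbit and hence a coboundary (citing the relevant result of \cite{LM}), and only then descends from $Y_c$ to $Y$ through the group extension $\tilde Y$, Lemma~\ref{lem:tilde_coc} with $\varphi\equiv 0$, and Haar averaging over $K$ to define $f_Y$; the convergence (\ref{eq:f_Y}) is then reduced back to assertion (i) by Lemma~\ref{lem:R2} together with the boundedness of the transfer functions. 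Your alternative of ``rerunning the transfinite induction in the normal quasi-isometric system of $(Y,S)$'' is not justified: the maximality of $\alpha$ was established in the tower of $(X,T)$, $X_\alpha$ need not occur as a level of $Y$'s normal tower, the projections of $f_Y$ along that tower would have to be rebuilt, and the same recurrence-lifting obstruction from (i) reappears. So both halves of the proposal are missing the key mechanism (Lemma~\ref{lem:R2} and, in (ii), the $Y_c$/group-averaging construction) that makes the paper's proof work.
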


We shall prove two technical lemmas first.
The first lemma shows that a ``relative'' non-triviality of a cocycle with respect to another cocycle can be lifted over an extension of the compact metric flow.

\begin{lemma}\label{lem:R2}
Let $(X,T)$ be a minimal compact metric flow, let $(Z,R)=\sigma(X,T)$ be a factor, and let $g=(g_1,g_2):Z\longrightarrow\R^2$ be a continuous function.
Suppose that there exists a sequence $\{(n_k,z_k)\}_{k\geq 1}\subset\Z\times Z$ with $d_Z(z_k,R^{n_k}z_k)\to 0$ so that $g_1(n_k,z_k)\to 0$ and $g_2(n_k,z_k)\nrightarrow 0$ as $k\to\infty$.
Then there exists a sequence $\{(m_k,x_k)\}_{k\geq 1}\subset\Z\times X$ so that $d_X(x_k,T^{m_k}x_k)\to 0$ and $(g\circ\sigma)(m_k,x_k)\to(0,\infty)$.
\end{lemma}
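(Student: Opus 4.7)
My plan is to proceed in two stages: first, manufacture inside $(Z,R)$ a sequence along which $g$ tends to $(0,\infty)$ in $\R_\infty\times\R_\infty$; second, lift it to $(X,T)$ through the factor map $\sigma$.

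By compactness, pass to a subsequence with $g(n_k,z_k)\to(0,c)$ for some $c\in\R_\infty\setminus\{0\}$. If $c=\pm\infty$, nothing further is required in the first stage. If $c$ is finite and nonzero, Lemma \ref{lem:at}(ii) applied to $(Z,R,g)$ yields $(0,c)\in E(g)$; the closed-subgroup structure of $E(g)$ in $\R^2$ from Remarks \ref{rem:er} then gives $\{0\}\times c\Z\subset E(g)$. A diagonalisation using Definition \ref{def:er} over shrinking neighbourhoods of $z^*:=\lim z_k$ and of $(0,Nc)\in\R^2$ as $N\to\infty$ produces a sequence $(n_k',z_k')\subset\Z\times Z$ with $d_Z(z_k',R^{n_k'}z_k')\to 0$, $g_1(n_k',z_k')\to 0$ and $g_2(n_k',z_k')\to\infty$. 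After renaming we assume $g(n_k,z_k)\to(0,\infty)$.

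For the lift, take any $x_k\in\sigma^{-1}(z_k)$; then $(g\circ\sigma)(n_k,x_k)=g(n_k,z_k)\to(0,\infty)$ is automatic, and only the condition $d_X(x_k,T^{n_k}x_k)\to 0$ remains at stake. By compactness pass to a subsequence with $x_k\to x^*$ and $T^{n_k}x_k\to x^{**}$, both in $\sigma^{-1}(z^*)$; if $x^*=x^{**}$ the lemma is immediate with $m_k=n_k$. In the remaining case, use the minimality of $(X,T)$ to pick $p_l\in\Z$ with $T^{p_l}x^{**}\to x^*$ and form $m_l:=n_{k(l)}+p_l$ with $k(l)\to\infty$ diagonal, so that $d_X(x_{k(l)},T^{m_l}x_{k(l)})\to 0$. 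The cocycle identity gives
\begin{equation*}
(g\circ\sigma)(m_l,x_{k(l)})=g(n_{k(l)},z_{k(l)})+g(p_l,R^{n_{k(l)}}z_{k(l)}),
\end{equation*}
whose second summand is within $1/l$ of $g(p_l,z^*)$ for $k(l)$ large enough, by continuity of $g(p_l,\cdot)$ at $z^*$.

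The principal obstacle is that $g_1(p_l,z^*)$ need not tend to $0$, threatening the first coordinate of the desired limit. My proposed resolution is a refined choice of $p_l$ using Lemma \ref{lem:o_p} applied to the skew product $\mathbf T_{g_1\circ\sigma}$ on $X\times\R_\infty$ (selecting $x^{**}$ in the residual set where $\bar{\mc O}=\mc D$, which is available through the freedom in the initial lift $x_k\in\sigma^{-1}(z_k)$) together with a further subgroup-cancellation argument on $E(g_1\circ\sigma)\subset\R$ analogous to the first stage; this ensures $g_1(p_l,z^*)\to 0$ while preserving $T^{p_l}x^{**}\to x^*$, so that $(m_l,x_{k(l)})$ delivers both $d_X(x_{k(l)},T^{m_l}x_{k(l)})\to 0$ and $(g\circ\sigma)(m_l,x_{k(l)})\to(0,\infty)$, as required.
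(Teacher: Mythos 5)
Your Stage 1 (upgrading the hypothesis to a sequence along which $g\to(0,\infty)$ via Lemma \ref{lem:at}(ii) and the closed-subgroup structure of $E(g)$) is correct and is exactly how the paper begins. The gap is in Stage 2, at precisely the point you flag. The step you need — integers $p_l$ with $T^{p_l}x^{**}\to x^*$ \emph{and} $g_1(p_l,z^*)\to 0$ — does not follow from anything available. The hypotheses only concern the cocycle $g$ downstairs on $Z$ along almost-returning times; they give no control whatsoever of $g_1\circ\sigma$ along times that transport one point of the fibre $\sigma^{-1}(z^*)$ to another point of the same fibre. Your proposed repair does not close this: (a) you cannot ``select $x^{**}$ in the residual set'' of Lemma \ref{lem:o_p} — $x^{**}$ arises as a limit of $T^{n_k}x_k$ and is not a free parameter, and a residual subset of $X$ need not meet the nowhere dense fibre $\sigma^{-1}(z^*)$ at all; (b) even granting $\bar{\mc O}_{T,g_1\circ\sigma}(x^{**},0)=\mc D_{T,g_1\circ\sigma}(x^{**},0)$, you would still have to show that $(x^*,0)$ lies in that prolongation, and no hypothesis provides this; (c) the ``subgroup-cancellation on $E(g_1\circ\sigma)$'' presupposes information about the essential range of $g_1\circ\sigma$ over $X$, but the existence of times with small $d_X$-displacement and small $g_1\circ\sigma$-value is essentially the first coordinate of the very conclusion you are proving, so the argument threatens to be circular.

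The paper avoids the two-limit-point problem altogether by collapsing everything onto a single orbit \emph{before} lifting: by Lemma \ref{lem:at}(ii) the point $(z,0,\infty)$ lies in the $\mathbf R_g$-prolongation of $(z,0,0)$ in $Z\times(\R_\infty)^2$ for every $z\in Z$, and Lemma \ref{lem:o_p} then yields one point $z'$ at which prolongation and orbit closure coincide, hence a single sequence of integers $\{i_k\}$ with $g_1(i_k,z')\to 0$ and $g_2(i_k,z')\to\infty$. The lift is then immediate: choose one $x'\in\sigma^{-1}(z')$, extract a subsequence $\{j_k\}$ of $\{i_k\}$ along which $T^{j_k}x'$ converges (and the $g_2$-values grow fast enough), and set $(m_k,x_k)=(j_{k+1}-j_k,\,T^{j_k}x')$, so that $(g\circ\sigma)(m_k,x_k)=g(j_{k+1},z')-g(j_k,z')\to(0,\infty)$ while $d_X(x_k,T^{m_k}x_k)\to 0$. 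No control of the cocycle along fibre-connecting times is ever required; to salvage your route you would have to prove your correction claim, and the natural way to do so is exactly this single-orbit reduction.
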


\begin{proof}
Suppose at first that $|g_2(n_k,z_k)|\nrightarrow\infty$ and let $z\in Z$ be a cluster point of the sequence $\{z_k\}_{k\geq 1}$.
Then the statement (ii) in Lemma \ref{lem:at} implies that $E(g)$ has an element of the form $(0,c)$ with $c\in\R\setminus\{0\}$, and as $E(g)$ is a closed subspace we can change the sequence $\{(n_k,z_k)\}_{k\geq 1}\subset\Z\times Z$ so that $d_Z(z_k,R^{n_k}z_k)\to 0$, $g_1(n_k,z_k)\to 0$ and $|g_2(n_k,z_k)|\to\infty$ as $k\to\infty$.
For every cluster point $z$ of $\{z_k\}_{k\geq 1}$ it follows that the point $(z,0,\infty)$ is in the $\mathbf R_g$-prolongation of $(z,0,0)$ in $Z\times(\R_\infty)^2$, and by the statement (ii) in Lemma \ref{lem:at} this holds true for every $z'\in Z$. 
Hence by Lemma \ref{lem:o_p} there exists a point $z'\in Z$ so that $(z',0,\infty)$ is in the orbit closure of $(z',0,0)$, and thus there exists also a sequence of integers $\{i_k\}_{k\geq 1}$ so that $(g_1)(i_k,z')\to 0$ and $(g_2)(i_k,z')\nearrow\infty$ as $k\to\infty$.
Now we choose a point $x'\in\sigma^{-1}(z')$, and by the compactness of $X$ there exists a subsequence $\{j_k\}_{k\geq 1}$ of $\{i_k\}_{k\geq 1}$ so that $d_X(T^{j_{k+1}}x',T^{j_k}x')\to 0$ and $(g_1\circ\sigma, g_2\circ\sigma)(j_{k+1}-j_k,T^{j_k}x')\to(0,\infty)$.
The sequence $\{(m_k,x_k)\}_{k\geq 1}\subset\Z\times X$ with the required properties is given by $\{(m_k,x_k)=(j_{k+1}-j_k,T^{j_k}x')\}_{k\geq 1}$.
\end{proof}

The second lemma shows that an action by a group of automorphisms extending $(X_\alpha,T_\alpha)$ and a cocycle which is ``relatively'' trivial with respect to $f_\alpha$ give rise to a cocycle of the joint action of $\Z$ and the group of automorphisms.

\begin{lemma}\label{lem:tilde_coc}
Let $(Z,R)$ be a distal minimal compact metric flow which extends $(X_\alpha,T_\alpha)=\sigma_\alpha(Z,R)$, and let $G\subset\textup{Aut}(Z,R)$ be a Hausdorff topological group preserving the fibres of $\sigma_\alpha$.
Suppose that there exists a topological group homomorphism $\varphi:G\longrightarrow\R$ so that for every $h\in G$ and every $z\in Z$ it holds true that
\begin{equation*}
(h(z),\varphi(h))\in\mc D_{R,f_\alpha\circ\sigma_\alpha}(z,0) .
\end{equation*}
Furthermore, suppose that $g:Z\longrightarrow\R$ is a continuous function so that for every sequence $\{(n_k,z_k)\}_{k\geq 1}\subset\Z\times Z$ with $d_Z(z_k,R^{n_k}z_k)\to 0$ and $(f_\alpha\circ\sigma_\alpha)(n_k,z_k)\to 0$ as $k\to\infty$, it holds also true that
\begin{equation}\label{eq:g}
g(n_k,z_k)\to 0.
\end{equation}
Then there exists a topological cocycle $\tilde g((n,h),z)$ of the $\Z\times G$-action $\{h\circ R^n:(n,h)\in\Z\times G\}$ so that
\begin{equation*}
g(n,z)=\tilde g((n,\mathbf 1_G),z)
\end{equation*}
holds true for every $z\in Z$ and every integer $n$.
\end{lemma}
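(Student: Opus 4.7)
The plan is to define $\tilde g$ on $\Z\times G$ by constructing, for each $h\in G$, a continuous function $\tilde g_h:Z\longrightarrow\R$ intended to represent $\tilde g((0,h),\cdot)$, and then setting
\begin{equation*}
\tilde g((n,h),z):=g(n,z)+\tilde g_h(R^n z).
\end{equation*}
For fixed $h$ and $z$, the hypothesis $(h(z),\varphi(h))\in\mc D_{R,f_\alpha\circ\sigma_\alpha}(z,0)$ furnishes a sequence $\{(n_k,z_k)\}\subset\Z\times Z$ with $z_k\to z$, $R^{n_k}z_k\to h(z)$ and $(f_\alpha\circ\sigma_\alpha)(n_k,z_k)\to\varphi(h)$. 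After passing to a subsequence so that $g(n_k,z_k)$ converges in $\R_\infty$, I define $\tilde g_h(z)$ as this limit.

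The key step is well-definedness: the limit must not depend on the approximating sequence. Given a second sequence $\{(m_j,w_j)\}$ with $g(m_j,w_j)\to b$, I exploit the fact that for each fixed $j$ the homeomorphism $R^{m_j}$ and the functions $f_\alpha(m_j,\cdot),\,g(m_j,\cdot)$ are continuous on the compact space $Z$. A diagonal choice $k_j\to\infty$ with $d_Z(z_{k_j},w_j)$ sufficiently small then ensures that the mixed sequence $(m_j,z_{k_j})$ still yields $R^{m_j}z_{k_j}\to h(z)$, $f_\alpha(m_j,z_{k_j})\to\varphi(h)$ and $g(m_j,z_{k_j})\to b$. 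The difference sequence $(n_{k_j}-m_j,\,R^{m_j}z_{k_j})$ then satisfies
\begin{equation*}
d_Z\bigl(R^{m_j}z_{k_j},\,R^{n_{k_j}-m_j}R^{m_j}z_{k_j}\bigr)\to 0\quad\text{and}\quad(f_\alpha\circ\sigma_\alpha)(n_{k_j}-m_j,R^{m_j}z_{k_j})\to 0,
\end{equation*}
so hypothesis (\ref{eq:g}) forces $g(n_{k_j}-m_j,R^{m_j}z_{k_j})\to 0$. Using the cocycle identity for $g$, the left-hand side rewrites as $g(n_{k_j},z_{k_j})-g(m_j,z_{k_j})\to a-b$, giving $a=b$; a variant of the same argument, comparing with any approximating sequence yielding a finite $g$-limit, rules out $a=\pm\infty$, so the definition takes values in $\R$.

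Continuity of $z\mapsto\tilde g_h(z)$ is obtained by the analogous diagonal argument applied to approximating sequences based at two nearby points $z_n\to z$, and continuity in $h$ follows from the continuity of $\varphi$ together with the continuity of the $G$-action on $Z$. For the cocycle identity of $\tilde g$ I would first derive the compatibility
\begin{equation*}
g(n,hz)-g(n,z)=\tilde g_h(R^n z)-\tilde g_h(z)
\end{equation*}
by applying the cocycle identity for $g$ along an approximating sequence for $\tilde g_h(z)$, and the $G$-cocycle relation $\tilde g_{h_1 h_2}(z)=\tilde g_{h_1}(h_2 z)+\tilde g_{h_2}(z)$ by composing approximating sequences for $h_1$ at $h_2 z$ with ones for $h_2$ at $z$. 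These two identities, together with the group law $(n_1,h_1)(n_2,h_2)=(n_1+n_2,h_1 h_2)$, imply the cocycle identity for $\tilde g((n,h),z)=g(n,z)+\tilde g_h(R^n z)$ on $\Z\times G$. Finally, taking $h=\mathbf 1_G$ with base point $z_k\equiv z$ in the approximating sequence, hypothesis (\ref{eq:g}) directly gives $\tilde g_{\mathbf 1_G}\equiv 0$, whence $\tilde g((n,\mathbf 1_G),z)=g(n,z)$ as required.
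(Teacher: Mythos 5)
The central gap is in your well-definedness step, and it propagates to finiteness and continuity. Your diagonal mixing needs $d_Z(z_{k_j},w_j)$ to fall below the modulus of continuity of the time-$m_j$ maps $R^{m_j}$, $f_\alpha(m_j,\cdot)$ and $g(m_j,\cdot)$; but that threshold $\delta_j$ depends on $m_j$ and may be far smaller than $d_Z(z,w_j)$, which you cannot influence by choosing $k_j$ (you can only force $z_{k_j}$ close to $z$, not close to $w_j$). Since the family $\{g(n,\cdot)\}_{n\in\Z}$ is not equicontinuous, hypothesis (\ref{eq:g}) --- which only controls sums along almost-closed orbit segments with small $f_\alpha$-sum, evaluated at a \emph{single} base point --- gives no means to compare $g(m_j,w_j)$ with $g(m_j,z_{k_j})$ when the base points are merely nearby. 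The same defect undermines the claimed continuity of $z\mapsto\tilde g_h(z)$ (two distinct base points, unbounded times), and the finiteness claim is circular: ``comparing with any approximating sequence yielding a finite $g$-limit'' presupposes that such a sequence exists at every $z$ and every $h$, which is precisely what has to be proved.

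The paper avoids all of this by not defining the limit pointwise from the prolongation at an arbitrary $z$. It forms the auxiliary skew product $\mathbf U$ on $Z\times(\R_\infty)^2$ from the pair $(f_\alpha\circ\sigma_\alpha,g)$, uses Lemma \ref{lem:o_p} to fix a point $\tilde z$ with $\bar{\mc O}_\mathbf U(\tilde z,0,0)=\mc D_\mathbf U(\tilde z,0,0)$, and for each $h\in G$ chooses integers $n_k^h$ with $(\mathbf R_{f_\alpha\circ\sigma_\alpha})^{n_k^h}(\tilde z,0)\to(h(\tilde z),\varphi(h))$: because all approximations run along the single orbit of $\tilde z$, differences of the sums $g(n+n_k^h,R^m\tilde z)$ are again cocycle values at orbit points, so (\ref{eq:g}) yields a Cauchy sequence and hence a finite, sequence-independent limit on that orbit. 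Definedness, finiteness and continuity at all other points are then obtained by contradiction: a discontinuity or an infinite value at some $(n,h,z)$ is transported, via minimality of $R$, the invariance of $f_\alpha\circ\sigma_\alpha$ under $h$, and the closedness of the prolongation relation, into either two distinct values or the value $\infty$ sitting over $(h(R^n\tilde z),\varphi(h)+(f_\alpha\circ\sigma_\alpha)(n,\tilde z))$ in $\mc D_\mathbf U(\tilde z,0,0)=\bar{\mc O}_\mathbf U(\tilde z,0,0)$, contradicting (\ref{eq:g}). Your outline of the cocycle identity and of $\tilde g((n,\mathbf 1_G),\cdot)=g(n,\cdot)$ would go through once such foundations are in place, but without the generic-point/prolongation machinery (or a genuine substitute) the construction of $\tilde g$ itself is not justified.
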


\begin{proof}
We let $\mathbf U:Z\times(\R_\infty)^2\longrightarrow Z\times(\R_\infty)^2$ be the skew product transformation defined by $(Z,R)$ and the function $(f_\alpha\circ\sigma_\alpha,g)$, and we fix a point $\tilde z\in Z$ so that the $\bar{\mc O}_\mathbf U (\tilde z,0,0)$ and $\mc D_\mathbf U (\tilde z,0,0)$ coincide in $Z\times(\R_\infty)^2$ (cf. Lemma \ref{lem:o_p}).
Then we select a sequence $\{n_k^h\}_{k\geq 1}$ for every $h\in G$ so that $(\mathbf R_{f_\alpha\circ\sigma_\alpha})^{n_k^h}(\tilde z,0)\to(h(\tilde z),\varphi(h))$ as $k\to\infty$.
As $h$ is an automorphism of $(Z,R)$ and the function $f_\alpha\circ\sigma_\alpha$ is invariant under $h$, we can conclude for every fixed integer $m$ (cf. the proof of Proposition \ref{prop:flow}) that $(\mathbf R_{f_\alpha\circ\sigma_\alpha})^{n_k^h}(R^m \tilde z,0)\to(h(R^m \tilde z),\varphi(h))$ as $k\to\infty$.
By equation (\ref{eq:g}) the sequence $\{g(n+n_k^h,R^m\tilde z)\}_{k\geq 1}$ is convergent for every integer $n$, and hence we can put
\begin{equation}\label{eq:def_g}
\tilde g((n,h),R^m\tilde z)=\lim_{k\to\infty} g(n+n_k^h,R^m\tilde z) .
\end{equation}
Moreover, it follows from equation (\ref{eq:g}) that the definition of $\tilde g((n,h),R^m\tilde z)$ is independent of the choice of the sequence $\{n_k^h\}_{k\geq 1}$.
We claim that this mapping extends from the $R$-orbit of $\tilde z$ to a continuous mapping $\tilde g:\Z\times G\times Z\longrightarrow\R$.
If not, then there exist a point $(n,h,z)\in\Z\times G \times Z$ and sequences $\{m_k^{(i)}\}_{k\geq 1}$ and $\{n_k^{(i)}\}_{k\geq 1}$ with $i\in\{1,2\}$ so that $R^{m_k^{(i)}}\tilde z\to z$, $R^{m_k^{(i)}+n_k^{(i)}}\tilde z\to h(z)$, and
\begin{equation*}
(f_\alpha\circ\sigma_\alpha)(n+n_k^{(i)},R^{m_k^{(i)}}\tilde z)\to\varphi(h)+(f_\alpha\circ\sigma_\alpha)(n,z) ,
\end{equation*}
as $k\to\infty$, while the limit points
\begin{equation*}
\tilde g_i=\lim_{k\to\infty} g(n+n_k^{(i)},R^{m_k^{(i)}}\tilde z)\in\R_\infty
\end{equation*}
are either distinct for $i\in\{1,2\}$ or both of them are equal to $\infty$.
For $i\in\{1,2\}$ the point $(h(R^n z),\varphi(h)+(f_\alpha\circ\sigma_\alpha)(n,z),\tilde g_i)$ is an element of $\mc D_\mathbf U (z,0,0)$, and then it follows for every integer $m$ that the point
\begin{eqnarray*}
(h(R^{m+n}z),\varphi(h)+(f_\alpha\circ\sigma_\alpha)(n,z)+(f_\alpha\circ\sigma_\alpha)(m,h(R^n z))-(f_\alpha\circ\sigma_\alpha)(m,z),\ \\
g(m,h(R^n z))+\tilde g_i-g(m,z))=\\
=(h(R^{m+n} z),\varphi(h)+(f_\alpha\circ\sigma_\alpha)(n,R^m z),g(m,h(R^n z))+\tilde g_i-g(m,z))
\end{eqnarray*}
is an element of $\mc D_\mathbf U (R^m z,0,0)$.
Hence by the density of the $R$-orbit of $z$ and $h\in\textup{Aut}(Z,R)$ there are either distinct elements $a_1,a_2\in\R_\infty$ with
\begin{equation*}
(h(R^n \tilde z),\varphi(h)+(f_\alpha\circ\sigma_\alpha)(n,\tilde z),a_i)\in\mc D_\mathbf U (\tilde z,0,0)
\end{equation*}
or it holds that $(h(R^n \tilde z),\varphi(h)+(f_\alpha\circ\sigma_\alpha)(n,\tilde z),\infty)\in\mc D_\mathbf U (\tilde z,0,0)$.
In either case occurs a contradiction to equality (\ref{eq:g}) by the coincidence of the sets $\bar{\mc O}_\mathbf U (\tilde z,0,0)$ and $\mc D_\mathbf U (\tilde z,0,0)$ in $Z\times(\R_\infty)^2$.

Now we turn to the cocycle identity.
Let $(j_i,h_i)\in\Z\times G$ with $i\in\{1,2\}$ be two arbitrary elements, and choose sequences $\{n_k^{h_i}\}_{k\geq 1}\subset\Z$ as above.
We fix an integer $m$ and conclude from equality (\ref{eq:def_g}) and $h_2\in\textup{Aut}(Z,R)$ that
\begin{eqnarray*}
\tilde g((j_1,h_1),h_2(R^{j_2+m}\tilde z))+\tilde g((j_2,h_2),R^m\tilde z)=\hspace{4.5cm}\\
=\lim_{l\to\infty}\lim_{k\to\infty}g(j_1+n_l^{h_1},R^{j_2+n_k^{h_2}+m}\tilde z)+\lim_{k\to\infty}g(j_2+n_k^{h_2},R^{m}\tilde z)=\\
=\lim_{l\to\infty}\lim_{k\to\infty}g(j_1+n_l^{h_1}+j_2+n_k^{h_2},R^{m}\tilde z) .
\end{eqnarray*}
A diagonalisation of sequences and the invariance $f_\alpha\circ\sigma_\alpha=f_\alpha\circ\sigma_\alpha\circ h_2$ imply that
\begin{equation*}
(\mathbf R_{f_\alpha\circ\sigma_\alpha})^{n_l^{h_1}+n_{k_l}^{h_2}}(\tilde z,0)\to((h_1( h_2(\tilde z)),\varphi(h_1)+\varphi(h_2))=((h_1 h_2)(\tilde z),\varphi(h_1 h_2))
\end{equation*}
and
\begin{eqnarray*}
\lim_{l\to\infty}\lim_{k\to\infty}g(j_1+n_l^{h_1}+j_2+n_k^{h_2},R^{m}\tilde z)=\hspace{3cm}\\
\lim_{l\to\infty}g(j_1+n_l^{h_1}+j_2+n_{k_l}^{h_2},R^{m}\tilde z)=\tilde g((j_1+j_2,h_1+h_2),R^m\tilde z) .
\end{eqnarray*}
From $\bar{\mc O}_R(\tilde z)=Z$ and the continuity of $\tilde g$ on $\Z\times G\times Z$ follows the cocycle equality $\tilde g((j_1,h_1),h_2(R^{j_2} z))+\tilde g((j_2,h_2),z)=g((j_1+j_2,h_1+h_2),z)$ for all $z\in Z$.
\end{proof}

\begin{proof}[Proof of Proposition \ref{prop:res}]
(i)
We let $\{(n_k,x_k)\}_{k\geq 1}\subset\Z\times X$ be a fixed sequence with $d (x_k, T^{n_k} x_k)\to 0$ and $(f_\alpha\circ\pi_\alpha)(n_k,x_k)\to 0$ as $k\to\infty$, and we want to verify by transfinite induction that $f(n_k,x_k)\to 0$.

Suppose that $\xi$ is an ordinal with $\alpha\leq\xi<\eta$ and put $\gamma=\xi+1$, and suppose that $(f_\xi\circ\pi_\xi)(n_k,x_k)\to 0$ and $(f_\gamma\circ\pi_\gamma)(n_k,x_k)\nrightarrow 0$ as $k\to\infty$.
Let $(\tilde X,\tilde T)$ and $K\subset\textup{Aut}(\tilde X,\tilde T)$ define a compact metric group extension of $(X_\xi, T_\xi)$ so that $(X_\gamma,T_\gamma)$ is the $H$-orbit space in $(\tilde X,\tilde T)$ of a compact subgroup $H\subset K$, and denote by $\sigma$ the factor map from $(\tilde X,\tilde T)$ to $(X_\gamma,T_\gamma)$.
Now consider the $\R^2$-valued cocycle of $(X_\gamma,T_\gamma)$ defined by the function $g=(f_\xi\circ\pi_\xi^\gamma, f_\gamma-f_\xi\circ\pi_\xi^\gamma)$.
By the Lemma \ref{lem:R2} there exist a sequence $(m_k,\tilde x_k)\in\Z\times\tilde X$ with $\tilde d(\tilde x_k,\tilde T^{m_k}\tilde x_k)\to 0$ and
\begin{equation*}
(f_\xi\circ\pi_\xi^\gamma\circ\sigma, f_\gamma\circ\sigma-f_\xi\circ\pi_\xi^\gamma\circ\sigma)(m_k,\tilde x_k)\to(0,\infty) .
\end{equation*}
As the skew product extension $\mathbf T_{\gamma,f_\gamma}$ is not topologically transitive due to the maximality of the ordinal $\alpha$, also $\mathbf{\tilde T}_{f_\gamma\circ\sigma}$ is not topologically transitive.
By Lemma \ref{lem:at} there exist a compact neighbourhood $L\subset\R$ of $0$ and an $\varepsilon>0$ so that $\tilde d_\gamma(\tilde x, \tilde T ^n \tilde x)<\varepsilon$ for some $\tilde x\in\tilde X$ and $n\in\Z$ implies that $(f_\gamma\circ\sigma)(n,\tilde x)\notin 2L\setminus L^0$, and by Lemma \ref{lem:iso} there exists a $\delta>0$ so that $\tilde d_\gamma(\tilde x,\tilde T ^n \tilde x)<\delta$ for $\tilde x\in\tilde X$ and $n\in\Z$ implies that $\tilde d_\gamma(g(\tilde x),\tilde T ^ng(\tilde x))<\varepsilon$ for every $g\in K$.
As the cocycle $(f_\gamma-f_\xi\circ\pi_\xi^\gamma)(m_k,x_\gamma)$ has a connected range and a zero on the fibre $(\pi_\xi^\gamma)^{-1}(\pi_\xi^\gamma\circ\sigma(\tilde x_k))$, a contradiction arises for all large enough $k\geq 1$ with the properties that $\tilde d(\tilde x_k,\tilde T^{m_k}\tilde x_k)<\delta$, $(f_\xi\circ\pi_\xi^\gamma\circ\sigma)(m_k,\tilde x_k)\in L$, and $(f_\gamma\circ\sigma-f_\xi\circ\pi_\xi^\gamma\circ\sigma)(m_k,\tilde x_k)\notin 3 L$.
Therefore we can conclude that $(f_\gamma\circ\pi_\gamma)(n_k,x_k)\rightarrow 0$ as $k\to\infty$.

Now suppose that $\gamma$ is a limit ordinal and that $(f_\xi\circ\pi_\xi)(n_k,x_k)\to 0$ holds true for all ordinals $\xi$ with $\alpha\leq\xi<\gamma$.
From the maximality of the ordinal $\alpha$ it follows again that $\mathbf T_{\gamma,f_\gamma}$ is not topologically transitive, and thus there exist (cf. Lemma \ref{lem:at}) a compact neighbourhood $L\subset\R$ of zero and an $\varepsilon>0$ so that $d_\gamma(x, T_\gamma ^n x)<\varepsilon$ for $x\in X_\gamma$ and $n\in\Z$ implies that $f_\gamma(n,x)\notin 2L\setminus L^0$.
Given $\varepsilon$ we can find an ordinal $\alpha<\zeta<\gamma$ so that $d_\gamma (x,z)<\varepsilon/3$ holds true for all $x,z\in X_\gamma$ with $\pi_\zeta^\gamma (x)=\pi_\zeta^\gamma (z)$, and for all integers $k$ with $d_\gamma(\pi_\gamma(x_k),\pi_\gamma(T^{n_k} x_k))<\varepsilon/3$ this implies that $d_\gamma(z_k, T_\gamma^{n_k} z_k)<\varepsilon$ for all $z_k\in X_\gamma$ with $\pi_\zeta^\gamma (z_k)=\pi_\zeta(x_k)$.
The cocycle $(f_\gamma-f_\zeta\circ\pi_\zeta^\gamma)(n,x)$ has a zero $z_k$ on every fibre $(\pi_\zeta^\gamma)^{-1}(\pi_\zeta(x_k))$, and it follows for every $k\geq 1$ with $(f_\zeta\circ\pi_\zeta)(n_k,x_k)\in L$ from the inclusion $f_\gamma(n_k,z_k)\notin 2L\setminus L^0$ that $f_\gamma(n_k,z_k)\in L$.
We can conclude from the connectedness of the fibre that $(f_\gamma\circ\pi_\gamma)(n_k,x_k)\in L$, and as the neighbourhood $L$ can be selected arbitrarily small, it follows that $(f_\gamma\circ\pi_\gamma)(n_k,x_k)\to 0$ as $k\to\infty$.

The transfinite induction process gives $f(n_k,x_k)\to 0$ as $k\to\infty$, and together with $(f_\alpha\circ\pi_\alpha)(n_k,x_k)\to 0$ also $(f-f_\alpha\circ\pi_\alpha)(n_k,x_k)\to 0$ as $k\to\infty$.

(ii)
We let $(Y_c,S_c)=\pi_c(X,T)$ be the flow defined by the connected components of the fibres of $\pi_Y$ (cf. \cite{MMWu}, Definition 2.3), and we let $\rho$ be the factor map from $(Y_c,S_c)$ onto $(Y,S)=\rho(Y_c,S_c)$.
There exists a family of relatively invariant probability measures $\{\mu_{c,y}:y\in Y_c\}$ with $\mu_{c,y}(\pi_c^{-1}(y))=1$ for every $y\in Y_c$ and the properties stated in Proposition \ref{prop:meas}, and we can define a continuous function $f_c:Y_c\longrightarrow\R$ by
\begin{equation*}
f_c(y)=\mu_{c,y}(f)
\end{equation*}
so that for every $y\in Y_c$ every integer $n$ it holds that $f_c(n,y)=\mu_{c,y}(f(n,\cdot))$.
We select a point $\tilde x\in X$ with $\mc D_{T,f_\alpha\circ\pi_\alpha}(T^n \tilde x,0)=\bar{\mc O}_{T,f_\alpha\circ\pi_\alpha}(T^n \tilde x,0)$ for every integer $n$, and we want to verify that the cocycle $(f-f_c\circ\pi_c)(n,\tilde x)$ is uniformly bounded for all integers $n$ and thus is a topological coboundary (cf. \cite{LM}, Lemma 3.1).
The construction of the factor $(Y,S)$ (cf. equality (\ref{eq:G})) and $\mc D_{T,f_\alpha\circ\pi_\alpha}(T^n \tilde x,0)=\bar{\mc O}_{T,f_\alpha\circ\pi_\alpha}(T^n \tilde x,0)$ imply for every integer $n$ that
\begin{equation*}
\bar{\mc O}_{T,f_\alpha\circ\pi_\alpha}(\tilde x,0)\cap(\pi_c^{-1}(\pi_c(T^n\tilde x))\times\R)=\pi_c^{-1}(\pi_c(T^n\tilde x))\times\{(f_\alpha\circ\pi_\alpha)(n,\tilde x)\} .
\end{equation*}
We let $\mathbf U:X\times\R^2\longrightarrow X\times\R^2$ be the skew product transformation defined by $(X,T)$ and the function $(f_\alpha\circ\pi_\alpha,f)$, and we observe that
\begin{eqnarray*}
\bar{\mc O}_\mathbf U (\tilde x,0,0)\cap(\pi_c^{-1}(\pi_c(T^n\tilde x))\times\{(f_\alpha\circ\pi_\alpha)(n,\tilde x)\}\times\R)=\\
\{(x,(f_\alpha\circ\pi_\alpha)(n,\tilde x),\phi_n(x)):x\in\pi_c^{-1}(\pi_c(T^n\tilde x))\}
\end{eqnarray*}
holds for every integer $n$, in which $\phi_n:\pi_c^{-1}(\pi_c(T^n\tilde x))\longrightarrow\R$ is a continuous function.
Indeed, from $T^{n+n_k}\tilde x\to x\in\pi_c^{-1}(\pi_c(T^n\tilde x))$ and $(f_\alpha\circ\pi_\alpha)(n_k,T^n \tilde x)\to 0$ it follows by assertion (i) that $f(n_k,T^n \tilde x)$ is convergent and that this limit is unique, and the corresponding set in the orbit closure is the closed graph of $\phi_n$.
Furthermore, assertion (i) implies that for every $\varepsilon>0$ there exists a $\delta>0$ so that $x,x'\in\pi_c^{-1}(\pi_c(T^n\tilde x))$ and $d(x,x')<\delta$ are sufficient for $|\phi_n(x)-\phi_n(x')|<\varepsilon$, uniformly in $n$.
Hence by the connectedness of the fibres of $\pi_c$ there exists a constant $D>0$ with $|\phi_n(x)-\phi_n(x')|<D$ for all integers $n$ and all $x,x'\in\pi_c^{-1}(\pi_c(T^n\tilde x))$, and as $f_c(n,\pi_c(\tilde x))$ is defined by the $\mu_{c,\pi_c(\tilde x)}$-integral of $f(n,x)$, it follows that $|(f-f_c\circ\pi_c)(n,\tilde x)|< 2D$ for all integers $n$.
Moreover, we can conclude that assertion (ii) is valid with respect to $f_c$, $(Y_c,S_c)$, $\pi_c$, and $\tau_\alpha\circ\rho$.
Indeed, if there exists a sequence $\{(n_k,y_k)\}_{k\geq 1}\subset\Z\times Y_c$ so that $d_c(y_k, S_c^{n_k} y_k)\to 0$, $(f_\alpha\circ\tau_\alpha\circ\rho)(n_k,y_k)\to 0$, and $f_c(n_k,y_k)\nrightarrow 0$ as $k\to\infty$, then Lemma \ref{lem:R2} and the boundedness of the transfer function between $f$ and $f_c\circ\pi_c$ give a contradiction to assertion (i).

Now we consider the extension from $(Y,S)$ to $(Y_c,S_c)$.
By Theorem 3.7 in \cite{MMWu} this is an isometric extension, and by Fact \ref{fact:iso} there exists a compact metric group extension $(\tilde Y,\tilde S)$ of $(Y,S)$ by $K\subset\textup{Aut}(\tilde Y,\tilde S)$ so that $(Y_c,S_c)=\sigma(\tilde Y,\tilde S)$ is the factor defined by the orbit space of a compact subgroup $H\subset K$.
Moreover, we put $\tilde\sigma=\rho\circ\sigma$ so that $(Y,S)=\tilde\sigma(\tilde Y,\tilde S)$.
By the properties of $f_c$, for every sequence $\{(n_k,\tilde y_k)\}_{k\geq 1}\subset\Z\times\tilde Y$ with $d_{\tilde Y} (\tilde y_k,\tilde S^{n_k}\tilde y_k)\to 0$ and $(f_\alpha\circ\tau_\alpha\circ\tilde\sigma)(n_k,\tilde y_k)\to 0$ also $(f_c\circ\sigma)(n_k,\tilde y_k)\to 0$.
Thus we can apply Lemma \ref{lem:tilde_coc} for the joint action of $\tilde S$ and $K$ and the homomorphism $\varphi\equiv 0$, and we obtain a real valued cocycle $\tilde f_c((n,h),\tilde y)$ with $(n,h)\in\Z\times K$, $\tilde y\in\tilde Y$, so that for every integer $n$ and every $\tilde y\in\tilde Y$ it holds true that $\tilde f_c((n,\mathbf 1_K),\tilde y)=(f_c\circ\sigma)(n,\tilde y)$.
We let the continuous function $f_Y:Y\longrightarrow\R$ be defined by the integral of $f_c\circ\sigma$ over the $K$-orbits in $(\tilde Y,\tilde S)$ with respect to the Haar measure on $K$, and we observe that the integral of $(f_c\circ\sigma)(n,\cdot)$ over the $K$-orbit of $\tilde y$ is equal to $f_Y(n,\tilde\sigma(\tilde y))$ for every $\tilde y\in\tilde Y$ and every integer $n$.
From the cocycle identity for the action of $\Z\times K$ and the uniform boundedness of $\tilde f_c((0,h),\tilde y)$ for all $(h,\tilde y)\in K\times\tilde Y$ we can conclude that $(f_c-f_Y\circ\rho)\circ\sigma:\tilde Y\longrightarrow\R$ is a topological coboundary of $\tilde S$.
Therefore also the cocycle $(f_c-f_Y\circ\rho)(n,y)$ is uniformly bounded for all integers $n$ and all $y\in Y_c$, whence it is as well a topological coboundary.
The convergence (\ref{eq:f_Y}) follows now by Lemma \ref{lem:R2} and the boundedness of the transfer function between $f_c$ and $f_Y\circ\rho$.
\end{proof}

\begin{proposition}\label{prop:inc}
The cocycle $(f_Y-f_\alpha\circ\tau_\alpha)(n,y)$ can be extended  to a topological cocycle $\tilde f((n,t),y)$ of the $\Z\times\R$-flow $\{\Psi_t\circ S^n:(n,t)\in\Z\times\R\}$ in the sense that
\begin{equation*}
(f_Y-f_\alpha\circ\tau_\alpha)(n,y)=\tilde f((n,0),y)
\end{equation*}
for every $y\in Y$ and every integer $n$.
Moreover, there exists a continuous function $b:Y\longrightarrow\R$ so that for every $y\in Y$ and every integer $n$ it holds true that
\begin{equation}\label{eq:tf}
\tilde f((n,-(f_\alpha\circ\tau_\alpha)(n,y)),y)=b(\Psi_{-(f_\alpha\circ\tau_\alpha)(n,y)} (S^n y))-b(y) =b(R^n y)-b(y) ,
\end{equation}
and therefore the continuous function
\begin{equation*}
y\mapsto\tilde f((1,-(f_\alpha\circ\tau_\alpha)(y)),y)
\end{equation*}
is a topological coboundary with transfer function $b:Y\longrightarrow\R$ over the distal homeomorphism $R:Y\longrightarrow Y$ with $Ry=\Psi_{-(f_\alpha\circ\tau_\alpha)(y)}(Sy)$.
\end{proposition}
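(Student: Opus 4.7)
My plan is to build $\tilde f$ via Lemma~\ref{lem:tilde_coc} and then construct $b$ from the continuous graph structure of a combined skew product. I would apply Lemma~\ref{lem:tilde_coc} with $(Z,R)$ there replaced by $(Y,S)$, with the group $G=\{\Psi_t:t\in\R\}\subset\textup{Aut}(Y,S)$ produced in Proposition~\ref{prop:flow}, and with the continuous homomorphism $\varphi(\Psi_t)=t$. The three hypotheses are all met: the $\Psi_t$ preserve $\tau_\alpha$-fibres by construction; the condition $(\Psi_t(y),t)\in\mc D_{S,f_\alpha\circ\tau_\alpha}(y,0)$ is equation (\ref{eq:o_flow}) together with the transitive-point discussion in Proposition~\ref{prop:flow}; and the asymptotic vanishing of $g:=f_Y-f_\alpha\circ\tau_\alpha$ is equation (\ref{eq:f_Y}) of Proposition~\ref{prop:res}(ii). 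This yields the continuous cocycle $\tilde f((n,t),y)$ satisfying $\tilde f((n,0),y)=g(n,y)$. Writing $\phi_\alpha:=f_\alpha\circ\tau_\alpha$, I verify that $F(n,y):=\tilde f((n,-\phi_\alpha(n,y)),y)$ is an $R$-cocycle: the $\Psi$-invariance of $\phi_\alpha$ forces its $R$-cocycle to coincide with its $S$-cocycle, so $\phi_\alpha(n+m,y)=\phi_\alpha(n,R^my)+\phi_\alpha(m,y)$, and combining this with the $\Z\times\R$-cocycle identity of $\tilde f$ produces $F(n+m,y)=F(n,R^my)+F(m,y)$.

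To produce $b$ I would work with the combined skew product $\mathbf U:(y,s,u)\mapsto(Sy,s+\phi_\alpha(y),u+g(y))$ on $Y\times\R^2$. The argument underlying the third assertion of Proposition~\ref{prop:res}(i), transposed to $Y$ via the finite-limit property inherent in Lemma~\ref{lem:tilde_coc} together with the boundedness of the transfer between $f_c\circ\pi_c$ and $f_Y\circ\rho$ used in the proof of Proposition~\ref{prop:res}(ii), shows that for every $y_0$ in a residual subset of $Y$ the orbit closure $\bar{\mc O}_{\mathbf U}(y_0,0,0)$ is the closed graph of a continuous function $\Omega_{y_0}:\bar{\mc O}_{S,\phi_\alpha}(y_0,0)\to\R$ obeying $\Omega_{y_0}(Sy,s+\phi_\alpha(y))=\Omega_{y_0}(y,s)+g(y)$. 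By Proposition~\ref{prop:flow}, $(R^ny_0,0)\in\bar{\mc O}_{S,\phi_\alpha}(y_0,0)$ for every integer $n$; closing off the $R$-orbit inside $Y\times\{0\}$ then places the compact set $(X_\alpha\times\{m_0\})\times\{0\}$ inside the domain of $\Omega_{y_0}$, where $m_0$ denotes the $M$-coordinate of $y_0$. Setting $b(y):=\Omega_{y_0}(y,0)$ on $X_\alpha\times\{m_0\}$ gives a continuous function with $F(n,y_0)=\Omega_{y_0}(R^ny_0,0)=b(R^ny_0)-b(y_0)$, and this identity extends to all $y\in X_\alpha\times\{m_0\}$ by density of the $R$-orbit of $y_0$ and joint continuity.

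To extend $b$ from $X_\alpha\times\{m_0\}$ to $Y$ I would use the $\Psi$-flow: minimality of $\Phi$ on $M$ makes the translates $\Psi_t(X_\alpha\times\{m_0\})=X_\alpha\times\{\Phi_tm_0\}$ dense in $Y$, and I set $b(\Psi_tz):=b(z)+\tilde f((0,t),z)$ for $z\in X_\alpha\times\{m_0\}$. The coboundary relation then spreads from $X_\alpha\times\{m_0\}$ to $Y$ by combining the commutation $R\Psi_t=\Psi_tR$ (itself deduced from $R=\Psi_{-\phi_\alpha}\circ S$ and the $\Psi$-invariance of $\phi_\alpha$) with the cocycle identity of $\tilde f$, which collapses $F(n,\Psi_tz)-(b(R^n\Psi_tz)-b(\Psi_tz))$ to zero via the telescope $\tilde f((n,-\phi_\alpha(n,z)),z)+\tilde f((0,t),R^nz)-\tilde f((0,t),z)=\tilde f((n,t-\phi_\alpha(n,z)),z)$. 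The main obstacle I anticipate is the global well-definedness and continuity of the extension when $m_0$ admits a non-trivial $\Phi$-stabilizer $\tau\Z$, in which case $\Psi_\tau$ acts trivially on $Y$ but $\tilde f((0,\tau),\cdot)$ is a priori non-zero; establishing the required consistency $\tilde f((0,\tau),z)=0$ will need an argument combining the closed-subgroup structure of $E(\phi_\alpha)$ from Lemma~\ref{lem:at}, the linear growth $\tilde f((0,n\tau),z)=n\tilde f((0,\tau),z)$ forced by the cocycle identity, and the continuity of $\tilde f$ on $\Z\times\R\times Y$ to rule out unbounded accumulation.
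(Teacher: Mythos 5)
The first half of your proposal --- producing $\tilde f$ by applying Lemma \ref{lem:tilde_coc} to $(Y,S)$, $G=\{\Psi_t:t\in\R\}$, $\varphi=\textup{id}_\R$, with hypotheses supplied by the construction of $\Psi_t$ in Proposition \ref{prop:flow} and by equation (\ref{eq:f_Y}), and then checking that $y\mapsto\tilde f((n,-(f_\alpha\circ\tau_\alpha)(n,y)),y)$ is an $R$-cocycle --- is exactly the paper's route and is fine. The construction of $b$ is where you diverge, and there is a genuine gap. You define $b$ on a single $R$-orbit closure $X_\alpha\times\{m_0\}$ and then propagate it by $b(\Psi_t z):=b(z)+\tilde f((0,t),z)$. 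This only defines $b$ on the dense set $X_\alpha\times\{\Phi_t m_0:t\in\R\}$, and you give no argument that this function admits a continuous extension to all of $Y$; density alone does not yield that, and the statement (\ref{eq:tf}) is required for \emph{every} $y\in Y$. The paper avoids this entirely: it defines $b$ on all of $Y$ at once, as the unique third coordinate of $\bigcup_{y\in\tau_\alpha^{-1}(\tilde x)}\bar{\mc O}_{\mathbf U}(y,0,0)\cap(Y\times\{0\}\times\R)$, using that each $R$-orbit closure meets each $\tau_\alpha$-fibre in exactly one point (so these sets cover $Y$ and partition it by distality of $R$), and it gets continuity from the key uniqueness statement that the prolongation sets $C_{(y,y')}=\mc D_{\mathbf U}(y,0,0)\cap(\{y'\}\times\{0\}\times\R_\infty)$ contain at most one element --- an argument you would also need for your claimed graph property of $\Omega_{y_0}$ but do not supply.

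The second problem is the consistency issue you flag yourself but do not resolve, and your sketched fix does not work. When the kernel of the $\Phi$-action is $\tau\Z$ with $\tau\neq 0$ (by minimality the stabilizer of $m_0$ is the kernel, so $\Psi_\tau=\textup{id}_Y$), well-definedness of your extension needs $\tilde f((0,\tau),\cdot)\equiv 0$. The cocycle identity only shows that $c:=\tilde f((0,\tau),\cdot)$ is $S$-invariant, hence a constant, with $\tilde f((0,n\tau),\cdot)=nc$; but continuity of $\tilde f$ on $\Z\times\R\times Y$ gives no contradiction from this linear growth, since the group element $n\tau$ leaves every compact subset of $\R$ (indeed $\tilde f((0,t),\cdot)$ is typically unbounded in $t$, e.g.\ when the perturbation cocycle is transitive), and the essential-range structure of $E(f_\alpha\circ\tau_\alpha)$ says nothing about the value of $c$. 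In the paper's final formula $c$ is exactly $g(\tau,m)$, and nothing in the hypotheses forces the perturbation cocycle to vanish at time $\tau$ (only in the uniquely ergodic case does a zero-average argument bear on this). So the propagation step of your construction can genuinely fail as stated, whereas the paper's global definition of $b$ never passes through the $\Psi$-translation and so never meets this obstruction.
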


\begin{proof}
The assertion (ii) of Proposition \ref{prop:res} shows that the function $g=f_Y-f_\alpha\circ\tau_\alpha$, the group $G=\{\Psi_t:t\in\R\}\subset\textup{Aut}(Y,S)$, and the group homomorphism $\varphi=\textup{id}_\R$ fulfil the requirements of Lemma \ref{lem:tilde_coc}.
We obtain a cocycle $\tilde f((n,t),y)$ extending $(f_Y-f_\alpha\circ\tau_\alpha)(n,y)$, and it remains to construct a continuous function $b:Y\longrightarrow\R$ so that equality (\ref{eq:tf}) holds true for every point $y\in Y$ and every integer $n$.

Let $\mathbf U:Y\times(\R_\infty)^2\longrightarrow Y\times(\R_\infty)^2$ be the skew product transformation defined by $(Y,S)$ and the function $(f_\alpha\circ\tau_\alpha,f_Y-f_\alpha\circ\tau_\alpha)$, and choose a point $\tilde x\in X_\alpha$ so that there exists a point $\tilde y\in\tau_\alpha^{-1}(\tilde x)$ with $\bar{\mc O}_\mathbf U(\tilde y,0,0)=\mc D_\mathbf U(\tilde y,0,0)$ as well as $\bar{\mc O}_{T_\alpha,f_\alpha}(T_\alpha^n \tilde x,0)=X_\alpha\times\R$ holds true for every integer $n$.
We want to verify at first that for every $y\in Y$ with $\tau_\alpha(y)=\tilde x$ and every $y'\in Y$ the set
\begin{equation*}
C_{(y,y')}=\mc D_\mathbf U(y,0,0)\cap(\{y'\}\times\{0\}\times\R_\infty)
\end{equation*}
has at most one element.
Suppose that $(y',0,s_i)\in C_{(y,y')}$ with distinct $s_i\in\R_\infty$ for $i\in\{1,2\}$, then $\Psi_t\in\textup{Aut}(Y,S)$, the properties of $\tilde x$, and equality (\ref{eq:o_flow}) imply for every $t\in\R$ that
\begin{equation*}
(\Psi_t(y'),0,s_i+\tilde f((0,t),y')+\tilde f((0,-t),y))\in C_{(\Psi_t(y),\Psi_t(y'))} .
\end{equation*}
By the minimality of $\{\Psi_t:t\in\R\}$ on the fibre $\tau_\alpha^{-1}(\tilde x)$, there exists a point $\tilde y'\in Y$ so that the set $C_{(\tilde y,\tilde y')}$ contains either two distinct points or the point $(\tilde y',0,\infty)$.
In either case occurs a contradiction to Proposition \ref{prop:res} (ii), due to the identity $\bar{\mc O}_\mathbf U(\tilde y,0,0)=\mc D_\mathbf U(\tilde y,0,0)$.

For every $y\in Y$ with $\tau_\alpha(y)=\tilde x$ and every $y'\in Y$ there exists exactly one point in the set $\bar{\mc O}_{S,f_\alpha\circ\tau_\alpha}(y,0)\cap(\tau_\alpha^{-1}(\tau_\alpha(y'))\times\{0\})$, and by the distality of the extension from $(X_\alpha,T_\alpha)$ to $(Y,S)$ it follows for distinct $y_1, y_2\in\tau_\alpha^{-1}(\tilde x)$ that these points in $(\tau_\alpha^{-1}(\tau_\alpha(y'))\times\{0\})$ are distinct.
Therefore we can define a function $b:Y\longrightarrow\R$ with $b(y)=0$ for every $y\in\tau_\alpha^{-1}(\tilde x)$ by
\begin{equation*}
b(y')=\{t\in\R:(y',0,t)\in\cup_{y\in\tau_\alpha^{-1}(\tilde x)}\bar{\mc O}_\mathbf U (y,0,0)\cap(Y\times\{0\}\times\R)\} .
\end{equation*}
Indeed, for every $y\in\tau_\alpha^{-1}(\tilde x)$ the projection of the set $\bar{\mc O}_\mathbf U (y,0,0)\cap(Y\times\{0\}\times\R)$ on the first coordinate is exactly the orbit closure $\bar{\mc O}_R(y)$, and by the distality of $R$ the set of orbit closures forms a partition of $Y$.
The continuity of the function follows because $C_{(y,y')}$, which is defined with $\mc D_\mathbf U(y,0,0)$ instead of $\bar{\mc O}_\mathbf U (y,0,0)$, has at most one element for every pair $(y,y')\in Y^2$.

The equality (\ref{eq:tf}) follows now easily.
The function on $\Z\times Y$ defined by
\begin{equation*}
(n,y)\mapsto\tilde f((n,-(f_\alpha\circ\tau_\alpha)(n,y)),y)=\sum_{k=0}^{n-1}\tilde f((1,-f_\alpha\circ\tau_\alpha(R^k y)),R^k y)
\end{equation*}
is the cocycle given by the $\Z$-action $(n,y)\mapsto R^n y$ and the continuous function $y\mapsto\tilde f((1,-f_\alpha\circ\tau_\alpha(y)),y)$.
By the construction of the function $b:Y\longrightarrow\R$ this cocycle is a topological coboundary with $b$ as its transfer function.
\end{proof}

With these prerequisites we can conclude the proof of the Main Theorem:

\comment{
\begin{theorem}
Suppose that $T$ is a distal minimal homeomorphism of a compact metric space $(X,d)$ and $f:X\longrightarrow\R$ is a continuous function with a topologically conservative skew product extension $\mathbf T_f$ on $X\times\R$.
Then there exist a factor $(X_\alpha,T_\alpha)=\pi_\alpha(X,T)$, a continuous function $f_\alpha:X_\alpha\longrightarrow\R$, a compact metric space $(M,\delta)$, and a continuous distal $\R$-flow $\{\Phi_t:t\in\R\}$ on $M$, so that the Rokhlin extension $(Y,S)=(X_\alpha\times M,T_{\alpha,\Phi,f_\alpha})$ with
\begin{equation*}
T_{\alpha,\Phi,f_\alpha}(x,m)=(T_\alpha x,\Phi_{f_\alpha(x)}(m))
\end{equation*}
is a factor of $(X,T)$ with factor map $\pi_Y:(X,T)\longrightarrow(Y,S)$.
Moreover, there exists a continuous function $f_Y:Y\longrightarrow\R$ so that $f_Y\circ\pi_Y$ and $f$ are topologically cohomologous over $(X,T)$.
The skew product $\mathbf R_{f_\alpha\circ\tau_\alpha}$ over the distal homeomorphism
\begin{eqnarray*}
& R:& Y\longrightarrow Y \\
& & (x,m)\mapsto(T_\alpha x,m) ,
\end{eqnarray*}
in which $\tau_\alpha:(Y,S)\longrightarrow(X_\alpha,T_\alpha)$ denotes the factor map $(x,m)\mapsto x$, is a topologically transitive extension of every minimal $R$-orbit closure $X_\alpha\times\{m\}\subset Y$.
This skew product is related to the skew product transformation $\mathbf S_{f_Y}$ on $Y\times\R$ by a continuous mapping $F$ with
\begin{equation*}
F\circ\mathbf R_{f_\alpha\circ\tau_\alpha}=\mathbf S_{f_Y}\circ F
\end{equation*}
given by
\begin{eqnarray*}
& F: & Y\times\R\longrightarrow Y\times\R \\
&& (x,m,t)\mapsto(x,\Phi_t(m),t+g(t,m)+b(x,\Phi_t(m))) ,
\end{eqnarray*}
in which $b:Y\longrightarrow\R$ is a continuous function and $g(t,m):\R\times M\longrightarrow\R$ is a topological cocycle of the $\R$-flow $\{\Phi_t:t\in\R\}$.
If the minimal compact metric flow $(X,T)$ is uniquely ergodic, then the mapping $F$ is onto and closed.
Therefore the skew product $\mathbf S_{f_Y}$ is a topological factor of the skew product $\mathbf R_{f_\alpha\circ\tau_\alpha}$, and the space $Y\times\R$ admits a partition into $\mathbf S_{f_Y}$-orbit closures given by the images of the sets $X_\alpha\times\{m\}\times\R$ for $m\in M$ under the closed continuous onto mapping $F$.
\end{theorem}
}

\begin{proof}[Proof of the Main Theorem]
We let all the elements of the statement and the flow $\{\Psi_t:t\in\R\}\subset\textup{Aut}(Y,R)\cap\textup{Aut}(Y,S)$ be defined according to Propositions \ref{prop:max}, \ref{prop:flow}, \ref{prop:res}, and \ref{prop:inc}.
We define the cocycle $g(t,m)$ of the flow $\{\Phi_t:t\in\R\}$ for all $m\in M$ and $t\in\R$ by
\begin{equation*}
g(t,m)=\tilde f((0,t),(\tilde x,m))
\end{equation*}
with $\tilde x\in X_\alpha$ chosen so that $b(y)=0$ for all $y\in\tau_\alpha^{-1}(\tilde x)$ (cf. the proof of Proposition \ref{prop:inc}).
It follows then for arbitrary $y=(x,m)\in Y$ and $t\in\R$ that
\begin{equation*}
\tilde f((0,t),(x,m))= g(t,m)+b(x,\Phi_t(m))-b(x,m) ,
\end{equation*}
because the function $y\mapsto\tilde f((1,-(f_\alpha\circ\tau_\alpha)(y)),y)$ is a topological coboundary with transfer function $b:Y\longrightarrow\R$ over the homeomorphism $R$ with $\{\Psi_t:t\in\R\}\subset\textup{Aut}(Y,R)$.
The cocycle identity for $\tilde f((n,t),y)$ and the equality (\ref{eq:tf}) imply for every $(x,m)\in Y$ that
\begin{eqnarray}\label{eq:f_Yn}
f_Y(x,m)-f_\alpha(x)=\tilde f((0,f_\alpha(x)),R(x,m))+b(R(x,m))-b(x,m)=\nonumber \\
g(f_\alpha(x),m)+b(T_\alpha x,\Phi_{f_\alpha(x)}(m))-b(x,m)=g(f_\alpha(x),m)+(b\circ S-b)(x,m).
\end{eqnarray}
The assertion of the theorem follows now by replacing the function $f_Y$ by the cohomologous function $f_Y-(b\circ S-b)$, and the inclusion $\pi_Y^{-1}(x)\times\{0\}\subset\bar{\mc O}_{T, f_Y\circ\pi_Y}(x,0)$ for all $x$ in a residual subset of $X$ follows now from inclusion (\ref{eq:o_X}).
\end{proof}

\begin{proof}[Proof of the Corollary to the Main Theorem]
We let $F:Y\times\R\longrightarrow Y\times\R$ be the map defined in the Corollary, and we conclude for every $(x,m,t)\in Y\times\R$ that
\begin{eqnarray*}
F\circ\mathbf R_{f_\alpha\circ\tau_\alpha}(x,m,t)=F(T_\alpha x,m,t+f_\alpha(x))=\hspace{4.8cm}\\
=(T_\alpha x,\Phi_{t+f_\alpha(x)}(m),t+f_\alpha(x)+g(f_\alpha(x)+t,m))=\\
=(T_\alpha x,\Phi_{t+f_\alpha(x)}(m),t+f_Y(x,\Phi_t(m))+g(t,m))=\mathbf S_{f_Y}\circ F(x,m,t) .
\end{eqnarray*}
If the compact metric flow $(X,T)$ is uniquely ergodic, then its factors $(Y,S)$ and $(X_\alpha,T_\alpha)$ are also uniquely ergodic.
This however forces also the  unique ergodicity of the $\R$-flow $(M,\{\Phi_t:t\in\R\})$, because every $\{\Phi_t:t\in\R\}$-invariant probability measure on $M$ gives rise to an $S$-invariant measure on $Y$ defined as product measure on $X_\alpha\times M$.
This unique $\{\Phi_t:t\in\R\}$-invariant probability measure on $M$ coincides therefore with the relatively invariant measure on the fibres of $\tau_\alpha$.
The function $(f_Y-f_\alpha\circ\tau_\alpha)(n,\cdot)$ has zero integral on every $\tau_\alpha$-fibre for every integer $n$, and we can conclude from equation (\ref{eq:f_Yn}) that $g(t,\cdot)/t\to 0$ uniformly as $|t|\to\infty$.
For an arbitrary point $(x,m,t)\in Y\times\R$ and for every $s\in\R$ giving a solution to the equation
\begin{eqnarray*}
s+g(s,\Phi_{-s}(m))=s-g(-s,m)=t
\end{eqnarray*}
it holds true that $F(x,\Phi_{-s}(m),s)=(x,m,t)$.
Such a solution always exists, because by the uniform convergence $g(t,\cdot)/t\to 0$ the mapping $s\mapsto s+b(x,m)-g(-s,m)$ is onto $\R$ for every choice of $(x,m)\in Y$.
Moreover, we can conclude that every solution $s$ fulfils that $|s-t|<L$ with a uniform constant $L$ for all $(x,m,t)\in Y\times\R$.
From the compactness of $Y$ it follows now easily that the mapping $F$ is closed.
\end{proof}

\textbf{Acknowledgement}: The author would like to thank Professor Jon Aaronson and Professor Eli Glasner for useful discussions and encouragement while this work was done.

\end{document}